\numberwithin{equation}{section}
\newtheorem{thm}{Theorem}
\newtheorem{lem}{Lemma}
\newtheorem{cor}{Corollary}
\newtheorem{prop}{Proposition}
\newtheorem{defn}{Definition}
\newtheorem{exmp}{\it Example}
\newtheorem{rem}{\it Remark}
\def\leq{\leqslant}
\def\ul{\underline}
\def\N{{\mathbb N}}
\def\Z{{\mathbb Z}}
\def\C{{\mathbb C}}
\def\SS{{\mathfrak{T}}}
\def\YTL{{\rm YTL}_{d,n}(u)}
\def\CalZ{{\mathcal{Z}}}
\begin{document}
\title{Determination of the representations and a basis for the Yokonuma--Temperley--Lieb algebra}
\author{Maria Chlouveraki}
\address{Laboratoire de Math\'ematiques UVSQ, B\^atiment Fermat, 45 avenue des \'Etats-Unis,  78035 Versailles cedex, France.}
\email{maria.chlouveraki@uvsq.fr}
\author{Guillaume Pouchin}
\address{School of Mathematics, University of Edinburgh, JCMB King's Buildings, Edinburgh EH9 3JZ, United Kingdom.}
\email{g.pouchin@ed.ac.uk}

\subjclass[2010]{20C08, 05E10, 16S80}

\thanks{We are grateful to  Sofia Lambropoulou, Jes\'us Juyumaya and Dimoklis Goundaroulis for pointing out this problem to us, and for many fruitful conversations.
We would  also like to thank Stephen Griffeth for discussing  about the Littlewood--Richardson rule, and for answering many of our questions on this subject. 
This material is based upon work supported by the National Science Foundation under Grant No. 0932078 000, while the authors were in residence at the Mathematical Science Research Institute (MSRI) in Berkeley, California, during 2013.
The second author gratefully acknowledges the financial support of EPSRC through the  grant  Ep/I02610x/1.
This research has been co-financed by the European Union (European
Social Fund - ESF) and Greek national funds through the Operational Program
``Education and Lifelong Learning" of the National Strategic Reference
Framework (NSRF) - Research Funding Program: THALIS}
\maketitle
\begin{abstract}
We determine the representations of the Yokonuma--Temperley--Lieb algebra, which is defined as a quotient of the Yokonuma--Hecke algebra by  generalising the construction of the classical Temperley--Lieb algebra. We then deduce the dimension of this algebra, and produce an explicit basis for it.
\end{abstract}

\section{Introduction}

Let $d,n \in \N$ and let $u$ be an indeterminate.
The Yokonuma--Hecke algebra ${\rm Y}_{d,n}(u)$ was originally introduced by Yokonuma \cite{yo} in the context of Chevalley groups as a generalisation of the Iwahori--Hecke algebra $\mathcal{H}_n(u)$ of type $A$. More precisely, if $u$ is taken to be a power of a prime number, then 
$\mathcal{H}_n(u)$ is obtained as 
 the centraliser  algebra associated to the permutation representation of 
${\rm GL}_n(\mathbb{F}_u)$ with respect to a Borel subgroup, while 
 ${\rm Y}_{d,n}(u)$ is obtained as the centraliser  algebra with respect to any maximal unipotent subgroup.
 Thus, the Yokonuma--Hecke algebra can be also regarded as a particular case of a unipotent Hecke algebra. 
 In recent years, the presentation of the algebra ${\rm Y}(d,n)$  has been transformed  by Juyumaya  \cite{ju1, juka, ju} to the one used in this paper.  
For $d=1$, the algebra  ${\rm Y}_{1,n}(u)$ coincides with $\mathcal{H}_n(u)$.

The Yokonuma--Hecke algebra ${\rm Y}_{d,n}(u)$ can be also viewed as
a deformation of the group algebra of the complex reflection group $G(d,1,n) \cong (\Z/d \Z) \wr \mathfrak{S}_n$, where $\mathfrak{S}_n$ denotes the symmetric group on $n$ letters, 
and as a quotient of the group algebra of the framed braid group $(\Z/d \Z) \wr B_n$, where $B_n$ is the classical braid group on $n$ strands (of type $A$).
Thanks to the latter description, the Yokonuma--Hecke algebra
 has interesting topological interpretations in the context of framed knots and links. Juyumaya and Lambropoulou used  ${\rm Y}_{d,n}(u)$ to define knot invariants for framed knots \cite{jula1, jula2}. They subsequently proved that these invariants can be extended to classical and singular knots \cite{jula3, jula4}. 

Some information on the representation theory of ${\rm Y}_{d,n}(u)$ has been obtained by Thiem in the general context of unipotent Hecke algebras \cite{thi,thi2,thi3}. More recently, Poulain d'Andecy and the first author \cite{ChPdA} have given an explicit description of the irreducible representations of  ${\rm Y}_{d,n}(u)$, which are parametrised by the $d$-partitions of $n$.  The dimension of   ${\rm Y}_{d,n}(u)$ is equal to $d^n n!$.

The classical Temperley--Lieb algebra ${\rm TL}_{n}(u)$ can be defined as the quotient of the Iwahori--Hecke algebra $\mathcal{H}_n(u)$ over a certain ideal \cite{jo}. This algebra gives rise to the famous knot invariant known as the `Jones polynomial', and has many applications in other areas of mathematics, such as statistical mechanics. It is well-known that the irreducible representations of  $\mathcal{H}_n(u)$ are parametrised by the partitions of $n$. In his paper, Jones showed that the irreducible representations of the Temperley--Lieb algebra, which are the irreducible representations of  $\mathcal{H}_n(u)$ that pass to the quotient ${\rm TL}_{n}(u)$, are parametrised by the partitions whose Young diagrams have at most two columns. Using this result, he determined that the dimension of   ${\rm TL}_{n}(u)$ is equal to the $n$-th Catalan number $C_n$.
Starting with the standard basis of $\mathcal{H}_n(u)$, whose cardinality is equal to $n!$, he managed to extract an explicit generating set for ${\rm TL}_{n}(u)$ of cardinality equal to  $C_n$, thus describing a basis for the classical Temperley--Lieb algebra \cite{jo2,jo}.

In \cite{gjkl}, Goundaroulis, Juyumaya, Kontogeorgis and Lambropoulou generalised Jones's construction to the case of  ${\rm Y}_{d,n}(u)$, and introduced the Yokonuma--Temperley--Lieb algebra ${\rm YTL}_{d,n}(u)$ with the aim of studying its topological properties (see also \cite{jula5} for a survey on the topic). Our aim in this paper is to study the algebraic properties of this new object.

Our first objective is to determine the irreducible representations of $\YTL$, that is,
determine which irreducible representations of ${\rm Y}_{d,n}(u)$ pass to the quotient ${\rm YTL}_{d,n}(u)$.
Thanks to Tits's deformation theorem, we transfer our problem to the group algebra case, studying instead the representations of the analogous quotient of $G(d,1,n)$. 
 We then transform the problem to a study of the restriction of representations from $G(d,1,n)$ to the symmetric group $\mathfrak{S}_n$. As we will see in Section $4$, this restriction is controlled by  Littlewood--Richardson coefficients, which we  introduce in the beginning of this paper. 
Applying the Littlewood--Richardson rule in our case allows us to obtain the first main result of this paper:

$ $\\  
{\bf Theorem 1.} {\em Let $n \geq 3$. The irreducible representations of  ${\rm YTL}_{d,n}(u)$ are parametrised by the $d$-partitions of $n$ whose
Young  $d$-diagrams have at most two columns in total.}\\

The dimension of an irreducible representation of  ${\rm Y}_{d,n}(u)$ parametrised by the $d$-partition $\lambda$ is equal to the number of standard $d$-tableaux of shape $\lambda$ (see \S\ref{multipartitions} for the definition of standard $d$-tableaux).  
Given the explicit description of the irreducible representations of $\YTL$ by Theorem \ref{Rd}, we obtain that  the dimension of $\YTL$ is equal to $$
\frac{d\,(nd-n+d+1)}{2}\,C_n -d\,(d-1),
$$
where $C_n$ is, as before, the $n$-th Catalan number (Proposition \ref{propdim}). Note that for $d=1$, we recover the known result for the classical Temperley--Lieb algebra  ${\rm TL}_{n}(u)$ .

The second half of this paper focuses on the construction of a basis for $\YTL$. We denote by $g_1,\ldots,g_{n-1},t_1,\ldots,t_n$ the generators of the Yokonuma--Hecke algebra ${\rm Y}_{d,n}(u)$, where $t_1,\ldots,t_n$ are the generators of $(\Z/d\Z)^n$ (recall that ${\rm Y}_{d,n}(u)$ is a quotient of $(\Z/d\Z)^n \rtimes B_n$).
Juyumaya \cite{ju} has shown that, starting with the standard basis $\mathcal{B}_{{\rm IH}}$ of the Iwahori--Hecke algebra $\mathcal{H}_n(u)$ (case $d=1$), one can obtain naturally a basis $\mathcal{B}_{{\rm YH}}$ for the Yokonuma--Hecke algebra ${\rm Y}_{d,n}(u)$ in the following way:
$$ \mathcal{B}_{{\rm YH}} = \left\{\,t_1^{r_1}\ldots t_n^{r_n} \omega \,\,|\,\,\omega \in \mathcal{B}_{{\rm IH}}\,,\,\,(r_1,\ldots,r_n) \in \{0,\ldots,d-1\}^n\,\right\}.$$
Note that the cardinality of the above set is $d^n n!$, as desired.
However, in our case, if we start with the basis $\mathcal{B}_{{\rm TL}}$ for the Temperley--Lieb algebra constructed by Jones \cite{jo2}, the set 
$$S:=  \left\{\,t_1^{r_1}\ldots t_n^{r_n} \omega \,\,|\,\,\omega \in \mathcal{B}_{{\rm TL}}\,,\,\,(r_1,\ldots,r_n) \in \{0,\ldots,d-1\}^n\,\right\}.$$
has cardinality $d^n C_n$, which is greater than the dimension of $\YTL$ unless $d=1$. So $S$ is not a basis, but simply  a spanning set for the Yokonuma--Temperley--Lieb algebra $\YTL$ for $d>1$.

 Every word in $S$ splits into a `framing part' $t_1^{r_1}\ldots t_n^{r_n}$ 
 and a  `braiding part' $\omega \in  \mathcal{B}_{{\rm TL}}$. For each fixed element in the braiding part, we describe a set of linear  dependence relations among the framing parts. 
  We use these relations to extract from $S$ a smaller spanning set $\boldsymbol{S}_{d,n}$ for $\YTL$. 
  We then show that the cardinality of $\boldsymbol{S}_{d,n}$ is equal to the dimension of  the algebra, thus obtain the second main result of this paper:

 $ $ \\  
{\bf Theorem 2.} {\em Let $n \geq 3$. The set   
$$\boldsymbol{S}_{d,n}= \left\{\,t_1^{r_1}\ldots t_n^{r_n} \omega \,\,|\,\,\omega \in \mathcal{B}_{{\rm TL}}\,,\,\,(r_1,\ldots,r_n) \in \mathcal{E}_{d,n}(\omega)\,\right\},$$
 where $\mathcal{E}_{d,n}(\omega)$ is a subset of  $\{0,\ldots,d-1\}^n$ explicitly described, with the use of (\ref{p}), by Propositions \ref{R'(1)} (for $\omega=1$) and
  \ref{spanning} (for $\omega \neq 1$), is a basis of $\YTL$.}\\

A remarkable fact, which allowed us to obtain that the cardinality of $\boldsymbol{S}_{d,n}$ is equal to the dimension of $\YTL$, is that
the cardinality of $\mathcal{E}_{d,n}(\omega)$ only depends on the \emph{weight} of $\omega$, that is, the number of distinct generators $g_i$ appearing in the word $\omega$. More precisely, if $m$ denotes the weight of $\omega$, we have
$$ | \mathcal{E}_{d,n}(\omega)|=  2^{n-m-1}d^2-(2^{n-m-1}-1)d - \delta_{m,0} (d^2-d).$$

\section{The Littlewood--Richardson coefficients}

In the first section of this paper we will introduce all the combinatorial tools that we will need in order to study the representation theory of the Yokonuma--Temperley--Lieb algebra.

\subsection{Partitions} 
A partition is a family of positive integers $\lambda=(\lambda_1,\dots,\lambda_k)$  such that $\lambda_1\geq\lambda_2\geq\dots\geq\lambda_k$. We set $|\lambda|:=\lambda_1+\dots+\lambda_k$, and we call $|\lambda|$ the {\em size} of $\lambda$. 

Let $n \in \N$. If $\lambda$ is a partition such that $|\lambda|=n$, we say that $\lambda$ is a {\em partition of } $n$.
We will denote by $\mathcal{P}(n)$  the set of all partitions of $n$. We also set $\mathcal{P}:=\bigcup_{n \geq 0}\mathcal{P}(n)$, the set of all partitions (including the empty one).

We identify partitions with their {\em Young diagrams}: the Young diagram $Y(\lambda)$ of $\lambda$ is a left-justified array of $k$ rows such that
the $j$-th row contains  $\lambda_j$ {\em nodes } for all $j=1,\dots,k$. We write $\theta=(x,y)$ for the node in row $x$ and column $y$. A node $\theta \in \lambda$ is called {\it removable} if the set of nodes obtained from $\lambda$ by removing $\theta$ is still a partition. 

\subsection{Skew shapes}
If $\nu, \lambda \in \mathcal{P}$ are such that $Y(\lambda) \subseteq Y(\nu)$, we can define the {\em skew shape} $\nu/\lambda$; we have
$Y(\nu/\lambda) = Y(\nu) \setminus Y(\lambda)$ and  $|\nu/\lambda| =|\nu|- |\lambda|$.
We will denote by $\mathcal{S}$ the set of all skew shapes.

Let  $\nu/\lambda \in \mathcal{S}$ and let $\mu$ be a partition such that $|\mu|=|\nu/\lambda|$. 
A {\em skew semistandard tableau} $T$ {\em of shape} $\nu/\lambda$ {\em and weight} $\mu$ is a way of filling the boxes of $Y(\nu/\lambda)$ with entries in $\{1,2,\ldots,|\mu|\}$ such that:
\begin{itemize}
\item $T_{i,j} < T_{i+1,j}$ (the entries strictly increase down the columns); \smallbreak
\item $T_{i,j} \leq T_{i,j+1}$ (the entries increase along the rows); \smallbreak
\item $\mu_i$ corresponds to the number of entries equal to $i$.
\end{itemize}
We will denote by ${\rm SST}(\nu/\lambda)$ the set of all skew semistandard tableaux of shape  $\nu/\lambda$ (with any possible weight).

\begin{exmp}\label{ex1} 
{\rm Some skew semistandard tableaux of shape  $(4,3,2)/(2,1)$ and weight $(3,2,1)$: }
$$T_1=\begin{array}{l} \,\,\,\,\,\,\,\,\,\,\,\,\fbox{\scriptsize{$1$}}\fbox{\scriptsize{$1$}}\\[-0.14em]
\,\,\,\,\,\,\fbox{\scriptsize{$1$}}\fbox{\scriptsize{$2$}}\\[-0.18em]
\fbox{\scriptsize{$2$}}\fbox{\scriptsize{$3$}}\end{array}
\,\,\,\,\,\,\,\,T_2=
\begin{array}{l} \,\,\,\,\,\,\,\,\,\,\,\,\fbox{\scriptsize{$1$}}\fbox{\scriptsize{$1$}}\\[-0.14em]
\,\,\,\,\,\,\fbox{\scriptsize{$2$}}\fbox{\scriptsize{$2$}}\\[-0.18em]
\fbox{\scriptsize{$1$}}\fbox{\scriptsize{$3$}}\end{array}
\,\,\,\,\,\,\,\,T_3=
\begin{array}{l} \,\,\,\,\,\,\,\,\,\,\,\,\fbox{\scriptsize{$1$}}\fbox{\scriptsize{$2$}}\\[-0.14em]
\,\,\,\,\,\,\fbox{\scriptsize{$1$}}\fbox{\scriptsize{$2$}}\\[-0.18em]
\fbox{\scriptsize{$1$}}\fbox{\scriptsize{$3$}}\end{array}
$$
\end{exmp}

\vskip .1cm

\subsection{Littlewood--Richardson tableaux} 
For the definition of  Littlewood--Richardson tableaux,  we follow \cite{Le}.
Let $\nu/\lambda, \, \nu'/\lambda' \in \mathcal{S}$. 
For $T \in {\rm SST}(\nu/\lambda)$ and $k,l \in \mathbb{N}$, we define $T^l_k$ to be the number of entries $l$ in row $k$ of $T$.  Two tableaux $T \in {\rm SST}(\nu/\lambda)$ and $T' \in {\rm SST}(\nu'/\lambda')$ are called \emph{companion tableaux} if $T^l_k=T'^{k}_l$ for every $k,l \in \mathbb{N}$. In this case, $T$ is called $\nu'/\lambda'$-\emph{dominant} (and $T'$ is $\nu/\lambda$-dominant). Note that if $T$ is $\nu'/\lambda'$-dominant, then
the  weight of $T$ is equal to $\nu'-\lambda':=(\nu'_1-\lambda_1',\,\nu'_2-\lambda_2',\,\ldots)$.
 
We shall simply say that $T$ is {\em $\lambda'$-dominant} if there exists a partition $\nu'$ such that $T$ is $\nu'/\lambda'$-dominant.

\begin{defn}
 \emph{A tableau $T \in {\rm SST}(\nu/\lambda)$ is a {\em Littlewood--Richardson tableau} if $T$ is $\emptyset$-dominant.}
\end{defn}

\begin{exmp}{\rm
In Example $\ref{ex1}$, the tableaux $T_1$ and $T_2$ are Littlewood--Richardson, whereas $T_3$ is not.}
\end{exmp}

\begin{lem}\label{firstrow}
All entries in the first row of a Littlewood--Richardson tableau are $1$. 
\end{lem}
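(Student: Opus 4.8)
The plan is to exploit the companion-tableau characterisation of $\emptyset$-dominance directly, with no Littlewood--Richardson machinery beyond the definition. By hypothesis a Littlewood--Richardson tableau $T \in {\rm SST}(\nu/\lambda)$ is $\emptyset$-dominant, so by definition there exist a partition $\nu'$ and a companion tableau $T' \in {\rm SST}(\nu'/\emptyset) = {\rm SST}(\nu')$ satisfying the companion relation $T^l_k = (T')^k_l$ for all $k,l \in \mathbb{N}$. The crucial feature here is that, because $\lambda' = \emptyset$, the companion $T'$ has \emph{straight} shape $\nu'$, and I would phrase the whole argument in terms of $T'$ before translating back to $T$.

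The key observation is a general fact about a semistandard tableau of straight shape: in $T'$, every entry equal to $1$ must lie in the first row. Indeed, since $T'$ has straight shape every column is top-justified, and the strict increase down columns forces the entry in row $l$ of any column to be at least $l$; in particular no $1$ can occur below the first row. Hence $(T')^1_l = 0$ for every $l \geq 2$, i.e.\ no row of $T'$ below the first contains any $1$.

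I would then feed this into the companion relation at $k=1$: this gives $T^l_1 = (T')^1_l$ for every $l$, so $T^l_1 = 0$ whenever $l \geq 2$. In words, the first row of $T$ contains no entry equal to any $l \geq 2$. Since all entries of a semistandard tableau are positive integers, every entry in the first row of $T$ must equal $1$, which is exactly the assertion of the lemma.

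The only subtle point — and the step I would be most careful to get right — is the index swap $T^l_k = (T')^k_l$, so that ``the first row of $T$'' is correctly matched with ``the distribution of $1$'s across the rows of $T'$'' rather than with ``the first row of $T'$''. Once the straight-shape fact (all $1$'s sit in the top row of $T'$) is established, the conclusion is immediate, so I expect the write-up to be short, with essentially all of the content residing in the correct reading of the companion relation.
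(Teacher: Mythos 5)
Your proof is correct and follows essentially the same route as the paper's: both pass to the straight-shape companion tableau $T'$, observe that semistandardness forbids any entry $1$ below the first row of $T'$, and use the companion relation $T^l_1 = T'^1_l$ to conclude. The only cosmetic difference is that you argue directly (showing $T^l_1 = 0$ for $l \geq 2$) while the paper phrases it as a contradiction.
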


\begin{proof}
Let $T$ be a skew semistandard tableau of shape  $\nu/\lambda \in \mathcal{S}$ and weight $\mu \in \mathcal{P}$. The tableau $T$ is Littlewood--Richardson if and only if it is a companion tableau to a semistandard tableau $T'$ of shape $\mu$. If there exists an entry $l\neq 1$ in the first row of $T$, then there exists an entry equal to $1$ in the $l$-th row of $T'$ (since $T'^{1}_l =T^l_1 \neq 0$). This contradicts the fact that the entries of $T'$ strictly increase down the columns. 
\end{proof}

\subsection{Littlewood--Richardson coefficients} Let $\lambda, \mu, \nu \in \mathcal{P}$ .
We define the {\em Littlewood--Richardson coefficient} $c^{\nu}_{\lambda,\mu}$ to be the number of  Littlewood--Richardson tableaux of shape $\nu/\lambda$ and weight $\mu$.

\begin{rem}
{\rm If $\nu/\lambda \notin \mathcal{S}$ or $|\nu/\lambda| \neq |\mu|$, then $c^{\nu}_{\lambda,\mu}=0$. Moreover,  we have
\begin{center}
$c^{\nu}_{\lambda,\emptyset}=\left\{\begin{array}{ll}
1, &\text{ if }\nu=\lambda \\
0, &\text{ otherwise }
\end{array}\right.$
\,\,\,\,\,and\,\,\,\,\,
$c^{\nu}_{\nu,\mu}=\left\{\begin{array}{ll}
1, &\text{ if }\mu=\emptyset \\
0, &\text{ otherwise. }
\end{array}\right.$
 \end{center}}
\end{rem}

The Littlewood--Richardson coefficients arise when decomposing a product of two Schur functions (for the definition of Schur functions, see, for example, \cite{Mac}) as a linear combination of other Schur functions: for $\lambda \in \mathcal{P}$, we denote by ${\rm s}_\lambda$ the corresponding Schur function. We  then have
\begin{equation}\label{LRR}
{\rm s}_\lambda {\rm s}_\mu = \sum_{\nu \in \mathcal{P}(n)} c^{\nu}_{\lambda,\mu}{\rm s}_\nu
\end{equation}
where $n=|\lambda|+|\mu|$. Equation (\ref{LRR}) is known as the ``Littlewood--Richardson rule''.
It implies, among other things, the commutativity and associativity of Littlewood--Richardson coefficients (see also  \cite{Mac}, or \cite{DaKo} for an alternative proof), that is,
\begin{equation}\label{comm}
c^{\nu}_{\lambda,\mu}=c^{\nu}_{\mu,\lambda}
\end{equation}
and
\begin{equation}\label{assoc}
\sum_{\sigma}c^{\sigma}_{\lambda,\mu}c^{\pi}_{\sigma,\nu}=\sum_{\tau} c^{\tau}_{\mu,\nu}c^{\pi}_{\lambda,\tau}.
\end{equation}

The following property of the Littlewood--Richardson coefficients, which we prove here, is crucial for the results in Section \ref{sec3}.

\begin{lem}\label{Lemma1}
Let $n\in \N$ and let $\lambda, \mu \in \mathcal{P}$ with $|\lambda| + |\mu|=n$. Set $\alpha:=\lambda_1+\mu_1$. 
Then
\begin{enumerate}
\item for all $\nu \in  \mathcal{P}(n)$ with  $\nu_1 > \alpha$, we have $c^{\nu}_{\lambda,\mu}=0$; \smallbreak
\item there exists $\nu \in  \mathcal{P}(n)$ such that $\nu_1 = \alpha$ and $c^{\nu}_{\lambda,\mu} > 0$.
\end{enumerate}
\end{lem}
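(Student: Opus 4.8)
The plan is to treat the two parts separately, using throughout the combinatorial description of $c^{\nu}_{\lambda,\mu}$ as the number of Littlewood--Richardson tableaux of shape $\nu/\lambda$ and weight $\mu$. Part (1) will follow almost immediately from Lemma \ref{firstrow}, via a counting argument on the entries equal to $1$; part (2) will follow by exhibiting a single explicit Littlewood--Richardson tableau for a well-chosen $\nu$.

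For part (1), I would argue by contradiction. Fix $\nu \in \mathcal{P}(n)$ with $\nu_1 > \alpha$ and suppose $c^{\nu}_{\lambda,\mu} > 0$, so that there is a Littlewood--Richardson tableau $T$ of shape $\nu/\lambda$ and weight $\mu$. The first row of $Y(\nu/\lambda)$ contains exactly $\nu_1 - \lambda_1$ nodes, and by Lemma \ref{firstrow} every one of them is filled with a $1$. Since the weight of $T$ is $\mu$, the total number of entries equal to $1$ in $T$ is $\mu_1$; in particular the first row alone forces $\nu_1 - \lambda_1 \leq \mu_1$, that is $\nu_1 \leq \lambda_1 + \mu_1 = \alpha$. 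This contradicts $\nu_1 > \alpha$, so no such $T$ exists and $c^{\nu}_{\lambda,\mu}=0$.

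For part (2), I would take $\nu := (\lambda_1+\mu_1,\lambda_2+\mu_2,\ldots)$, the componentwise sum, which is again a partition (each inequality $\lambda_i\geq\lambda_{i+1}$, $\mu_i\geq\mu_{i+1}$ is preserved under addition) of size $n$ with $\nu_1 = \lambda_1 + \mu_1 = \alpha$. It then suffices to show $c^{\nu}_{\lambda,\mu} > 0$, for which I would exhibit one Littlewood--Richardson tableau. In the skew shape $\nu/\lambda$, row $i$ consists of the $\mu_i$ nodes in columns $\lambda_i+1,\ldots,\lambda_i+\mu_i$; I would fill every node of row $i$ with the entry $i$. This filling is weakly increasing along rows trivially, is strictly increasing down columns because the entry in row $i$ is $i$ and $i < i+1$, and has weight exactly $\mu$. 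To see it is Littlewood--Richardson, I would use the characterisation recalled in the proof of Lemma \ref{firstrow}: this $T$ is the companion tableau of the semistandard tableau $T'$ of straight shape $\mu$ whose $i$-th row is filled with $i$'s, since the number of $i$'s in row $k$ of $T$ equals $\mu_i\,\delta_{i,k}$, matching the number of $k$'s in row $i$ of $T'$. Hence $T$ is $\emptyset$-dominant, so $c^{\nu}_{\lambda,\mu} \geq 1$, completing the proof. The only genuinely delicate point, and the step I would check most carefully, is the verification that this filling is $\emptyset$-dominant; everything else is routine bookkeeping.
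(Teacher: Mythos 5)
Your proof is correct and follows essentially the same route as the paper: part (1) is the same counting of $1$'s in the first row via Lemma \ref{firstrow} (the paper argues directly where you argue by contradiction, a cosmetic difference), and part (2) uses the same witness $\nu_i=\lambda_i+\mu_i$ with row $i$ of $\nu/\lambda$ filled by the entry $i$. Your explicit verification of the companion-tableau condition for $\emptyset$-dominance is slightly more detailed than the paper's, which simply asserts that this filling is $\mu/\emptyset$-dominant.
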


\begin{proof} (1) If $\nu /\lambda \notin \mathcal{S}$, then $c^{\nu}_{\lambda,\mu}=0$. If $\nu /\lambda \in \mathcal{S}$, then the first row of $Y(\nu /\lambda)$ has $\nu_1-\lambda_1$ boxes. We have $\nu_1-\lambda_1>\alpha - \lambda_1=\mu_1$. Thus, if $T$ is a skew semistandard tableau of shape $\nu/\lambda$ and weight $\mu$, then the first row of $T$ must contain entries greater than $1$. By Lemma \ref{firstrow}, $T$ cannot be Littlewood--Richardson. Hence,
there exist no Littlewood--Richardson tableaux of shape $\nu/\lambda$ and weight $\mu$, that is, $c^{\nu}_{\lambda,\mu}=0$.\\
(2) Let $\nu$ be the partition of $n$ defined by $\nu_i:=\lambda_i+\mu_i$ for all $i \geq 1$. Then $\nu_1 = \alpha$, $\nu/\lambda \in \mathcal{S}$ and the $i$-th row of $Y(\nu /\lambda)$ has $\mu_i$ boxes. Let
$T$ be the skew semistandard tableau of shape $\nu /\lambda$ and weight $\mu$ obtained by filling every box of the $i$-th row of $Y(\nu /\lambda)$ with the entry $i$.
Then $T$ is $\mu/\emptyset$-dominant, and thus Littlewood--Richardson. We conclude that $c^{\nu}_{\lambda,\mu} > 0.$
\end{proof}

The next result shows what happens in the case where the weight of a Littlewood--Richardson tableau consists of just one row. This implies the so-called ``Pieri's rule'', which we will state in Section \ref{sec3}.

\begin{lem}\label{triv}
Let $n\in \N$ and let $\lambda, \mu \in \mathcal{P}$ with $|\lambda| + |\mu|=n$. Suppose that $\mu$ if of the form $(l)$ for some $1\leq l \leq n$. 
Let $\nu \in \mathcal{P}(n)$. We have $c^{\nu}_{\lambda,\mu} > 0$ if and only if the Young diagram of $\nu$ can be obtained from that of $\lambda$ by adding  $l$ boxes, with no two in the same column.
\end{lem}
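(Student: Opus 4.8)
The plan is to compute $c^{\nu}_{\lambda,(l)}$ directly from its definition as the number of Littlewood--Richardson tableaux of shape $\nu/\lambda$ and weight $(l)$, exploiting the fact that a weight consisting of a single part is maximally restrictive on the allowed entries. First I would observe that a skew semistandard tableau of weight $\mu=(l)$ has $l$ entries equal to $1$ and no entries greater than $1$; hence \emph{every} box of $Y(\nu/\lambda)$ must carry the entry $1$, so there is at most one candidate filling $T$. By the column-strictness condition $T_{i,j}<T_{i+1,j}$, this all-ones filling is a valid semistandard tableau if and only if no column of $Y(\nu/\lambda)$ contains two boxes, the row condition $T_{i,j}\leq T_{i,j+1}$ being automatic. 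Since $c^{\nu}_{\lambda,\mu}=0$ whenever $\nu/\lambda\notin\mathcal{S}$, this already shows that a skew semistandard tableau of shape $\nu/\lambda$ and weight $(l)$ exists precisely when $Y(\lambda)\subseteq Y(\nu)$ and $Y(\nu)$ is obtained from $Y(\lambda)$ by adding $l$ boxes, no two in the same column, and that in that case it is the unique such tableau $T$.

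It then remains to verify that this unique all-ones tableau $T$ is automatically Littlewood--Richardson, so that its existence alone forces $c^{\nu}_{\lambda,(l)}>0$. For this I would invoke the companion-tableau characterisation recalled in the proof of Lemma \ref{firstrow}: $T$ is Littlewood--Richardson if and only if it is companion to some semistandard tableau $T'$ of shape $\mu=(l)$. Writing $r_k$ for the number of boxes in row $k$ of $Y(\nu/\lambda)$, the all-ones tableau satisfies $T^{1}_{k}=r_k$ and $T^{l}_{k}=0$ for $l\geq 2$. The companion relation $T^{l}_{k}=T'^{k}_{l}$ therefore forces every row of $T'$ indexed by $l\geq 2$ to be empty, while the single nonempty row of $T'$ must contain exactly $r_k$ copies of each entry $k$. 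Arranging these $\sum_k r_k=l$ entries in weakly increasing order yields a genuine semistandard tableau of shape $(l)$, whose row and (trivial) column conditions hold; hence $T'$ exists, $T$ is Littlewood--Richardson, and in fact $c^{\nu}_{\lambda,(l)}=1$.

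Combining the two steps gives $c^{\nu}_{\lambda,(l)}>0$ exactly when $Y(\nu)$ is obtained from $Y(\lambda)$ by adding $l$ boxes with no two in the same column, as claimed. The argument is short, and the only point requiring genuine care is the index-swapped bookkeeping of the companion relation $T^{l}_{k}=T'^{k}_{l}$, which is where I expect any friction to lie: one must confirm that the row-distribution of $T$ transposes correctly into a valid single-row tableau $T'$, so that the all-ones filling is \emph{always} Littlewood--Richardson rather than merely semistandard.
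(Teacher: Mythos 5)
Your proposal is correct and follows essentially the same route as the paper: weight $(l)$ forces the all-ones filling, column-strictness forces the added boxes to lie in distinct columns, and the resulting tableau is Littlewood--Richardson. The only difference is that you verify the final Littlewood--Richardson step explicitly via the companion-tableau characterisation (incidentally obtaining $c^{\nu}_{\lambda,(l)}=1$), a step the paper simply asserts with ``Note that, in this case, $T$ is always Littlewood--Richardson.''
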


\begin{proof}
Let $T$ be a skew  semistandard tableaux of shape  $\nu/\lambda \in \mathcal{S}$ and weight $\mu = (l)$. Then $T$ has $l$ boxes, and all entries in $T$ are equal to $1$. Thus, for $T$ to be semistandard, each column ot $T$ must have at most one box. We deduce that $Y(\nu)$ is obtained  from $Y(\lambda)$ by adding $l$ boxes, with no two in the same column.  Note that, in this case, $T$ is always Littlewood--Richardson.
\end{proof}

\subsection{Multipartitions}\label{multipartitions}
Let $d \in \N$. A {\em $d$-partition} $\lambda$, or a {\em Young $d$-diagram}, of size $n$ is a $d$-tuple of partitions such that the total number of nodes in the associated Young diagrams is equal to $n$. That is, we have $\lambda=(\lambda^{(0)},\lambda^{(1)},\dots,\lambda^{(d-1)})$ with $\lambda^{(0)},\lambda^{(1)},\dots,\lambda^{(d-1)}$ usual partitions such that $|\lambda^{(0)}|+|\lambda^{(1)}|+\dots+|\lambda^{(d-1)}|=n$.
We also say that $\lambda$ is a {\em $d$-partition of} $n$. 
We denote by $\mathcal{P}(d,n)$ the set of $d$-partitions of $n$. We have $\mathcal{P}(1,n)=\mathcal{P}(n)$.

A {\em standard $d$-tableau of shape } $\lambda \in  \mathcal{P}(d,n)$ is a way of filling the boxes of the Young $d$-diagram of $\lambda$ with the numbers $1,2,\ldots,n$ such that the entries strictly increase down the columns and along the rows.

\section{The Yokonuma--Temperley--Lieb algebra }

\subsection{The Yokonuma--Hecke algebra ${\rm Y}_{d,n}(u)$}\label{YH algebra}

Let  $d,\,n \in \N$ , $d \geq 1$. Let $u $ be an indeterminate.  The \emph{Yokonuma--Hecke algebra}, denoted by ${\rm Y}_{d,n}(u)$, is a $\C[u,u^{-1}]$-associative algebra generated by the elements
$$
 g_1, \ldots, g_{n-1}, t_1, \ldots, t_n
$$
subject to the following relations:
\begin{equation}\label{modular}
\begin{array}{ccrclcl}
\mathrm{(b}_1)& & g_ig_j & = & g_jg_i && \mbox{for all  $i,j=1,\ldots,n-1$ such that $\vert i-j\vert > 1$,}\\
\mathrm{(b}_2)& & g_ig_{i+1}g_i & = & g_{i+1}g_ig_{i+1} && \mbox{for  all  $i=1,\ldots,n-2$,}\\
\mathrm{(f}_1)& & t_i t_j & =  &  t_j t_i &&  \mbox{for all  $i,j=1,\ldots,n$,}\\
\mathrm{(f}_2)& & t_j g_i & = & g_i t_{s_i(j)} && \mbox{for  all  $i=1,\ldots,n-1$ and $j=1,\ldots,n$,}\\
\mathrm{(f}_3)& & t_j^d   & =  &  1 && \mbox{for all  $j=1,\ldots,n$,}
\end{array}
\end{equation}
where $s_i$ denotes the transposition $(i, i+1)$, together with the quadratic
relations:
\begin{equation}\label{quadr}
g_i^2 = 1 +  (u-1) \, e_{i}  +(u-1) \, e_{i} \, g_i \qquad \mbox{for all $i=1,\ldots,n-1$}
\end{equation}
 where
\begin{equation}\label{ei}
e_i :=\frac{1}{d}\sum_{s=0}^{d-1}t_i^s t_{i+1}^{-s}.
\end{equation}
It is easily verified that the elements $e_i$ are idempotents in ${\rm Y}_{d,n}(u)$.  Also, that the elements $g_i$ are invertible, with
\begin{equation}\label{invrs}
g_i^{-1} = g_i + (u^{-1} - 1)\, e_i + (u^{-1} - 1)\, e_i\, g_i.
\end{equation}

Let us denote by $\mathfrak{S}_n$ the symmetric group on $n$ letters. The group  $\mathfrak{S}_n$ is generated by the transpositions $s_1,\ldots,s_{n-1}$.
If we specialise $u$ to $1$, the defining relations (\ref{modular})--(\ref{quadr}) become the defining relations for the complex reflection group $G(d,1,n) \cong (\Z/d\Z) \wr \mathfrak{S}_n$. Thus, the algebra ${\rm Y}_{d,n}(u)$ is a deformation of the group algebra over $\C$ of  $G(d,1,n)$. 
Moreover, for $d=1$, the Yokonuma--Hecke algebra ${\rm Y}_{1,n}(u)$ coincides with the Iwahori--Hecke algebra $\mathcal{H}_n(u)$ of type $A$.
Hence, for $d=1$ and $u$ specialised to $1$, we obtain the group algebra over $\C$ of the symmetric group $\mathfrak{S}_n$.

Furthermore, the relations $({\rm b}_1)$, $({\rm b}_2)$, $({\rm f}_1)$ and $({\rm f}_2)$ are defining relations for the classical framed braid group $\mathcal{F}_n \cong \Z\wr B_n$,  
where $B_n$ is the classical braid group on $n$ strands, with the $t_j$'s being interpreted as the ``elementary framings" (framing 1 on the $j$th strand). The relations $t_j^d = 1$ mean that the framing of each braid strand is regarded modulo~$d$.
Thus, the algebra ${\rm Y}_{d,n}(u)$ arises naturally  as a quotient of the framed braid group algebra over the modular relations ~$\mathrm{(f}_3)$ and the quadratic relations~(\ref{quadr}). Moreover, relations (\ref{modular}) are defining relations for the  modular framed braid group
$\mathcal{F}_{d,n}\cong (\Z/d\Z) \wr B_n$, 
so the algebra ${\rm Y}_{d,n}(u)$ can be also seen  as a quotient of the modular framed braid group algebra over the  quadratic relations~(\ref{quadr}). 

Due to the relations (${\rm f}_1$) and (${\rm f}_2$), every word $w$ in ${\rm Y}_{d,n}(u)$ can be written in the form
$
w=t_1^{r_1}\ldots t_n^{r_n}\cdot \sigma
$, 
where $r_1,\ldots,r_n \in \{0,\ldots,d-1\}$ and $\sigma$ is a word in $g_1,\ldots,g_{n-1}$. That is, $w$ splits into the `framing part'
$t_1^{r_1}\ldots t_n^{r_n}$ and the `braiding part' $\sigma$. This is useful in the construction of a basis for ${\rm Y}_{d,n}(u)$, see Section \ref{sectionbasis}. 

\subsection{Representations of ${\rm Y}_{d,n}(u)$}

In \cite{ChPdA}, Poulain d'Andecy and the first author explicitly constructed the irreducible representations of ${\rm Y}_{d,n}(u)$ over $\C(u)$, and showed that they are parametrised by the $d$-partitions of $n$.
Let us denote by 
$${\rm Irr}({\rm Y}_{d,n}(u))=\{ \rho^{\lambda}\,|\, \lambda \in \mathcal{P}(d,n)\}$$
the set of irreducible representations of the algebra  $\C(u){\rm Y}_{d,n}(u):=\C(u) \otimes_{\C[u,u^{-1}]}{\rm Y}_{d,n}(u)$.
They  also showed that the algebra $\C(u){\rm Y}_{d,n}(u)$ is split semisimple and that the specialisation $u \mapsto  1$ induces a bijection between ${\rm Irr}({\rm Y}_{d,n}(u))$ and the set 
$${\rm Irr}(G(d,1,n))=\{ E^{\lambda}\,|\, \lambda \in \mathcal{P}(d,n)\}$$
of irreducible representations of the group $G(d,1,n)$ over $\C$.
The last result can be also independently obtained with the use of Tits's deformation theorem (see, for example, \cite[Theorem 7.4.6]{gp}).

\begin{rem}{\rm
The trivial representation is labelled by the $d$-partition $((n),\emptyset,\emptyset,\ldots,\emptyset)$, in every case.}
\end{rem}

\subsection{The Yokonuma--Temperley--Lieb algebra  ${\rm YTL}_{d,n}(u)$}

Let $n \geq 3$. The \emph{Yokonuma--Temperley--Lieb algebra}, denoted by  ${\rm YTL}_{d,n}(u)$, is defined in \cite{gjkl} as the quotient of the Yokonuma--Hecke algebra ${\rm Y}_{d,n}(u)$ by the two-sided  ideal
$$ I:= \langle\, g_ig_{i+1}g_i+g_ig_{i+1}+g_{i+1}g_i+g_i+g_{i+1}+1\,\,|\,\, i=1,2,\ldots,n-2 \,\rangle.$$
For $d=1$, the algebra ${\rm YTL}_{1,n}(u)$ coincides with the ordinary Temperley--Lieb algebra ${\rm TL}_n(u)$.  Set
$$G_{i,i+1}:=g_ig_{i+1}g_i+g_ig_{i+1}+g_{i+1}g_i+g_i+g_{i+1}+1 \,\text{ for all }\, i=1,2,\ldots,n-2.$$
Using the braid relations (\ref{modular})(${\rm b}_1$) and  (\ref{modular})(${\rm b}_2$), one can easily check that
$$G_{i,i+1}=(g_1g_2\ldots g_{n-1})^{i-1} G_{1,2} \,(g_1g_2\ldots g_{n-1})^{-(i-1)}.$$
Hence, the ideal $I$ is generated by the element $G_{1,2}$, that is, we have
$$ I = \langle\, g_1g_{2}g_1+g_1g_{2}+g_{2}g_1+g_1+g_{2}+1\,\rangle.$$

\subsection{Representations of ${\rm YTL}_{d,n}(u)$} Since ${\rm YTL}_{d,n}(u)$ is a quotient of ${\rm Y}_{d,n}(u)$,
by standard results in representation theory, we have that:
\begin{itemize}
\item The algebra $\C(u){\rm YTL}_{d,n}(u):=\C(u) \otimes_{\C[u,u^{-1}]}{\rm YTL}_{d,n}(u)$ is split semisimple.\smallbreak
\item The irreducible representations of  $\C(u){\rm YTL}_{d,n}(u)$ are in bijection with  the irreducible representations $\rho^{\lambda}$ of $\C(u){\rm Y}_{d,n}(u)$ satisfying:
\begin{equation}\label{quotient}
 \rho^{\lambda}(g_1g_{2}g_1+g_1g_{2}+g_{2}g_1+g_1+g_{2}+1)=0.
\end{equation}
\end{itemize}
Let us denote by $\mathcal{R}(d,n)$ the set of $d$-partitions $\lambda$ of $n$ for which (\ref{quotient}) is satisfied. We  denote by  
$${\rm Irr}({\rm YTL}_{d,n}(u))=\{ \varrho^{\lambda}\,|\, \lambda \in \mathcal{R}(d,n)\}$$
the set of irreducible representations of the algebra  $\C(u){\rm YTL}_{d,n}(u)$.
For every $\lambda \in \mathcal{R}(d,n)$, we have 
$$\varrho^{\lambda} \circ \pi = \rho^{\lambda},$$ where $\pi$ is the natural surjective homomorphism
from ${\rm Y}_{d,n}(u)$ onto ${\rm YTL}_{d,n}(u)$. The first aim of this paper will be to determine the set $\mathcal{R}(d,n)$.

\begin{prop}\label{1}
We have $\lambda \in \mathcal{R}(d,n)$ if and only if the trivial representation is not a direct summand of the restriction
$\mathrm{Res}_{\langle s_1,s_2\rangle}^{G(d,1,n)}(E^{\lambda})$.
\end{prop}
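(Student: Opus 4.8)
By definition, $\lambda\in\mathcal{R}(d,n)$ means that (\ref{quotient}) holds, i.e. that $\rho^{\lambda}(G_{1,2})=0$, where $G_{1,2}=g_1g_2g_1+g_1g_2+g_2g_1+g_1+g_2+1$. The plan is to transfer this vanishing from the generic algebra $\C(u)\YH$ to the group algebra $\C G(d,1,n)$ via Tits's deformation theorem, and then to read off the specialised condition as a statement about the restriction to $\langle s_1,s_2\rangle$. Since the specialisation $u\mapsto 1$ sends $g_i\mapsto s_i$, the element $G_{1,2}$ specialises to $\overline{G}_{1,2}=1+s_1+s_2+s_1s_2+s_2s_1+s_1s_2s_1$. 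As $H:=\langle s_1,s_2\rangle\cong\mathfrak{S}_3$ consists of exactly these six elements, we have $\overline{G}_{1,2}=\sum_{h\in H}h=|H|\,e_H$, where $e_H$ is the trivial idempotent of $\C H$. Because $e_H$ acts on any $H$-module as the projection onto the $H$-invariants, $E^{\lambda}(\overline{G}_{1,2})=0$ if and only if $E^{\lambda}$ has no nonzero $H$-fixed vector, that is, if and only if the trivial representation is not a direct summand of $\mathrm{Res}_{H}^{G(d,1,n)}(E^{\lambda})$. This settles the group side, so it remains to prove the key equivalence $\rho^{\lambda}(G_{1,2})=0\iff E^{\lambda}(\overline{G}_{1,2})=0$.

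The implication $\Rightarrow$ is immediate: by \cite[Theorem 7.4.6]{gp} the representation $\rho^{\lambda}$ can be realised over a localisation of $\C[u,u^{-1}]$ at $u=1$ whose specialisation is $E^{\lambda}$, so the matrix identity $\rho^{\lambda}(G_{1,2})=0$ specialises to $E^{\lambda}(\overline{G}_{1,2})=0$. The converse is the heart of the matter, and I expect it to be the main obstacle, since a matrix over $\C(u)$ can vanish at $u=1$ without being identically zero; one needs a rigidity statement ensuring that the generic rank of $\rho^{\lambda}(G_{1,2})$ already equals its rank at $u=1$.

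To obtain this rigidity I would exploit the quasi-idempotent nature of $G_{1,2}$. Using the quadratic relation (\ref{quadr}) one checks $g_iG_{1,2}=(1+(u-1)e_i)G_{1,2}$ for $i=1,2$; pushing the resulting framing factors to the left past the $g_j$'s by means of (\ref{modular})(${\rm f}_2$), one then obtains $G_{1,2}^2=Q\,G_{1,2}$ for an element $Q$ lying in the commutative subalgebra generated by $t_1,t_2,t_3$, with $Q|_{u=1}=6$. Since $G_{1,2}$ and $Q$ are both fixed by the anti-automorphism of $\YH$ that fixes every generator, $Q$ commutes with $G_{1,2}$. Consequently, on $\rho^{\lambda}$ the operator $\tilde Q:=\rho^{\lambda}(Q)$ commutes with $M:=\rho^{\lambda}(G_{1,2})$ and is invertible for $u$ near $1$ (it specialises to $6\cdot\mathrm{id}$), so $P:=\tilde Q^{-1}M$ is idempotent and $\mathrm{rank}\,M=\mathrm{rank}\,P=\mathrm{tr}\,P$. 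The last quantity is an integer-valued, hence locally constant, function of $u$ near $1$, with value $\mathrm{rank}\bigl(\tfrac16 E^{\lambda}(\overline{G}_{1,2})\bigr)=\mathrm{rank}\,E^{\lambda}(\overline{G}_{1,2})$; as this local value coincides with the generic rank of $M$ over $\C(u)$, we conclude $\rho^{\lambda}(G_{1,2})=0\iff E^{\lambda}(\overline{G}_{1,2})=0$, which together with the first paragraph proves the proposition. The only genuinely delicate point is the bookkeeping establishing $G_{1,2}^2=Q\,G_{1,2}$ with $Q$ in the $t$-subalgebra and $Q|_{u=1}=6$; the rest of the argument is formal.
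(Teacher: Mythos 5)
Your proposal is correct, and it reaches the result by a genuinely different mechanism than the paper at the crucial step. The endgame is the same: at $u=1$ the element $\overline{G}_{1,2}$ equals $6$ times the trivial idempotent of $\C\langle s_1,s_2\rangle\cong\C\mathfrak{S}_3$, so $E^{\lambda}(\overline{G}_{1,2})=0$ precisely when the restriction contains no trivial summand (the paper phrases this by running over the three irreducibles of $\mathfrak{S}_3$, you phrase it via the projection onto invariants --- same content). The difference lies in how one passes between $\rho^{\lambda}(G_{1,2})=0$ and $E^{\lambda}(\overline{G}_{1,2})=0$. The paper applies Tits's deformation theorem a second time, to the quotient pair $\C(u)\YTL$ and $A=\C G(d,1,n)/J$: both being split semisimple, specialisation induces a bijection of irreducibles compatible with the bijection for $\YH$ versus $\C G(d,1,n)$, so the two label sets coincide. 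This is short, but it tacitly assumes the hypotheses of Tits's theorem for the quotient --- notably that $\YTL$ is finitely generated and free (or at least flat) over $\C[u,u^{-1}]$ --- which is not verified at that point of the paper (freeness follows only from the basis theorem at the very end, which itself rests on Theorem 1). Your argument instead never leaves $\YH$, whose freeness \cite{ju} and split semisimplicity together with the specialisation bijection \cite{ChPdA} are known, and extracts the equivalence from rank rigidity. Your key claims do check out: $g_iG_{1,2}=(1+(u-1)e_i)\,G_{1,2}$ holds for $i=1,2$ (the case $i=2$ uses the braid relation), so $Q$ is a sum of six products of factors of the form $1+(u-1)e_{a,b}$ with $e_{a,b}=\frac{1}{d}\sum_{s}t_a^st_b^{-s}$, lies in the subalgebra generated by $t_1,t_2,t_3$, and satisfies $Q|_{u=1}=6$; moreover, applying the anti-automorphism fixing all generators to the identity $G_{1,2}^2=QG_{1,2}$ yields $G_{1,2}^2=G_{1,2}Q$, whence $Q$ and $G_{1,2}$ commute, $P=\tilde{Q}^{-1}M$ (with $M=\rho^{\lambda}(G_{1,2})$, $\tilde{Q}=\rho^{\lambda}(Q)$) is idempotent over the local ring, and the trace-of-an-idempotent argument transfers the rank across the specialisation. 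So your route costs an explicit but routine computation and buys self-containedness --- no assumption on $\YTL$ as a $\C[u,u^{-1}]$-module is needed --- as well as the slightly stronger conclusion that the generic and specialised ranks of $\rho^{\lambda}(G_{1,2})$ agree, not merely that they vanish simultaneously.
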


\begin{proof}
Let us consider the quotient of the group algebra $\C G(d,1,n)$ by the two-sided ideal 
$$ J:= \langle\, s_1s_{2}s_1+s_1s_{2}+s_{2}s_1+s_1+s_{2}+1 \,\rangle.$$
The quotient algebra $A:=\C G(d,1,n)/J$ is a split semisimple agebra over $\C$. For $u=1$, the Yokonuma--Temperley--Lieb algebra  specialises to $A$.
By Tits's deformation theorem, the specialisation $u \mapsto 1$ yields a bijection between ${\rm Irr}({\rm YTL}_{d,n}(u))$ and the set ${\rm Irr}(A)$ of irreducible representations of $A$.
Thus, we can write:
$${\rm Irr}(A)=\{ \mathcal{E}^{\lambda}\,|\, \lambda \in \mathcal{R}(d,n)\}.$$

Now, following the same reasoning as for the Yokonuma--Temperley--Lieb algebra, $\mathcal{E}^{\lambda} \in {\rm Irr}(A)$ if and only if
$$ E^{\lambda}(s_1s_{2}s_1+s_1s_{2}+s_{2}s_1+s_1+s_{2}+1)=0.$$
This equation is equivalent to
$$ \mathrm{Res}_{\langle s_1,s_2\rangle}^{G(d,1,n)}(E^{\lambda})(s_1s_{2}s_1+s_1s_{2}+s_{2}s_1+s_1+s_{2}+1)=0.$$
Now, the group $ \langle s_1,s_{2}\rangle \cong \mathfrak{S}_3$ has three irreducible representations, parametrised by the partitions $(3)$, $(2,1)$ and $(1,1,1)$.
Among them, only the trivial representation, labelled by the partition $(3)$, does not take the value $0$ on $s_1s_{2}s_1+s_1s_{2}+s_{2}s_1+s_1+s_{2}+1$, whence the desired result.
\end{proof}

\begin{rem}{\rm
An alternative proof for Proposition \ref{1} can be obtained by looking directly at the representations of the Yokonuma--Hecke algebra ${\rm Y}_{d,n}(u)$ and their restrictions to ${\rm Y}_{d,3}(u)$, with the use of the formulas obtained in \cite{ChPdA}.}
\end{rem}

Proposition \ref{1} has transformed the problem of determination of the irreducible representations of ${\rm YTL}_{d,n}(u)$ to a problem of  determination of the irreducible representations appearing in the restriction of a representation from $G(d,1,n)$ to $\mathfrak{S}_3$. For $d=1$, the restriction of an irreducible representation labelled by a partition $\lambda$  corresponds to the removal of (removable) nodes  from the Young diagram of $\lambda$. More specifically, if $\lambda$ is a partition of $n$, then  $ \mathrm{Res}_{\mathfrak{S}_{n-1}}^{\mathfrak{S}_n}(E^\lambda)$  is the direct sum  of all irreducible representations labelled by the partitions of $n-1$ whose Young diagrams are obtained  from the Young diagram of $\lambda$ by removing one node (each representation appearing with multiplicity $1$).
As a consequence, $ \mathrm{Res}_{\mathfrak{S}_k}^{\mathfrak{S}_n}(E^\lambda)$, where $k<n$, is a direct sum  (with various multiplicities) of all representations labelled by the partitions of $k$ whose Young diagrams are obtained  from the Young diagram of $\lambda$ by removing $n-k$ nodes.
 In particular, $ \mathrm{Res}_{\mathfrak{S}_3}^{\mathfrak{S}_n}(E^\lambda)$ is a direct sum of all representations labelled by the partitions of $3$ whose Young diagrams are obtained from the Young diagram of $\lambda$ by removing $n-3$ nodes. Hence,  the trivial representation is a direct summand of  $ \mathrm{Res}_{\mathfrak{S}_3}^{\mathfrak{S}_n}(E^\lambda)$
if and only if 
the Young diagram of $\lambda$ has more than two columns. This implies the following corollary of Proposition \ref{1}:

\begin{cor}\label{cor1}
We have $\lambda \in \mathcal{R}(d,n)$ if and only if all direct summands of
$\mathrm{Res}_{\mathfrak{S}_n}^{G(d,1,n)}(E^{\lambda})$ are labelled by Young $d$-diagrams with at most two columns.
\end{cor}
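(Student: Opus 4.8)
The plan is to obtain Corollary \ref{cor1} as a direct consequence of Proposition \ref{1}, by factoring the restriction to $\langle s_1,s_2\rangle$ through $\mathfrak{S}_n$ and then invoking the $\mathfrak{S}_n\downarrow\mathfrak{S}_3$ branching computation recalled in the paragraph preceding the corollary.

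First I would use the transitivity of restriction. Since $\langle s_1,s_2\rangle\cong\mathfrak{S}_3$ is a subgroup of $\mathfrak{S}_n$, which in turn is a subgroup of $G(d,1,n)$, we have
$$\mathrm{Res}_{\langle s_1,s_2\rangle}^{G(d,1,n)}=\mathrm{Res}_{\mathfrak{S}_3}^{\mathfrak{S}_n}\circ\mathrm{Res}_{\mathfrak{S}_n}^{G(d,1,n)}.$$
Writing the intermediate restriction as $\mathrm{Res}_{\mathfrak{S}_n}^{G(d,1,n)}(E^{\lambda})=\bigoplus_{\mu\in\mathcal{P}(n)}m_{\lambda,\mu}\,E^{\mu}$ with multiplicities $m_{\lambda,\mu}\geq 0$ (the summands being ordinary $\mathfrak{S}_n$-representations, labelled by ordinary partitions of $n$), and applying $\mathrm{Res}_{\mathfrak{S}_3}^{\mathfrak{S}_n}$ term by term yields
$$\mathrm{Res}_{\langle s_1,s_2\rangle}^{G(d,1,n)}(E^{\lambda})=\bigoplus_{\mu\in\mathcal{P}(n)}m_{\lambda,\mu}\,\mathrm{Res}_{\mathfrak{S}_3}^{\mathfrak{S}_n}(E^{\mu}).$$

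Second, because these are honest direct-sum decompositions into irreducibles, the trivial representation of $\mathfrak{S}_3$ is a summand of the left-hand side if and only if it is a summand of $\mathrm{Res}_{\mathfrak{S}_3}^{\mathfrak{S}_n}(E^{\mu})$ for at least one $\mu$ with $m_{\lambda,\mu}>0$. By the branching rule recalled just before the corollary, $\mathrm{Res}_{\mathfrak{S}_3}^{\mathfrak{S}_n}(E^{\mu})$ contains the trivial representation precisely when the Young diagram of $\mu$ has more than two columns. Hence the trivial representation occurs in $\mathrm{Res}_{\langle s_1,s_2\rangle}^{G(d,1,n)}(E^{\lambda})$ if and only if some constituent $\mu$ of $\mathrm{Res}_{\mathfrak{S}_n}^{G(d,1,n)}(E^{\lambda})$ has a Young diagram with at least three columns.

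Finally I would combine this with Proposition \ref{1}, which states that $\lambda\in\mathcal{R}(d,n)$ if and only if the trivial representation is \emph{not} a summand of $\mathrm{Res}_{\langle s_1,s_2\rangle}^{G(d,1,n)}(E^{\lambda})$. By the previous step this is equivalent to requiring that no constituent $\mu$ of $\mathrm{Res}_{\mathfrak{S}_n}^{G(d,1,n)}(E^{\lambda})$ has more than two columns, i.e.\ that every such constituent has at most two columns, which is exactly the asserted condition. The argument is carried entirely by Proposition \ref{1} and the $\mathfrak{S}_n\downarrow\mathfrak{S}_3$ branching fact, so there is no genuine obstacle; the only point requiring care is the logical bookkeeping of the ``if and only if'', namely that passing the trivial representation through the two-stage restriction involves no cancellation, so that its presence for $\mathfrak{S}_3$ is detected exactly by the presence of a ``bad'' constituent $\mu$ at the level of $\mathfrak{S}_n$.
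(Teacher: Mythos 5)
Your proof is correct and follows exactly the paper's own route: the paper derives this corollary from Proposition \ref{1} together with the branching observation (stated in the paragraph immediately preceding the corollary) that the trivial representation occurs in $\mathrm{Res}_{\mathfrak{S}_3}^{\mathfrak{S}_n}(E^{\mu})$ precisely when $\mu$ has more than two columns, using transitivity of restriction through $\mathfrak{S}_n$. You have simply made explicit the bookkeeping that the paper leaves implicit, including the correct reading of the summands of $\mathrm{Res}_{\mathfrak{S}_n}^{G(d,1,n)}(E^{\lambda})$ as ordinary partitions of $n$.
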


This in turn yields the following
well-known characterisation of the irreducible representations of the classical Temperley--Lieb algebra 
${\rm TL}_n(u)$:
\begin{cor}
We have 
\begin{equation}\label{R1}
\mathcal{R}(1,n) = \{\lambda \in \mathcal{P}(n)\,|\, \lambda_1 \leq 2\}.
\end{equation}That is,
$E^\lambda \in {\rm Irr}({\rm TL}_n(u))$  if and only if the Young diagram of $\lambda$ has at most two columns. 
\end{cor}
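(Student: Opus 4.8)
The plan is to deduce this corollary directly from Corollary \ref{cor1} (equivalently, from Proposition \ref{1}) by specialising the parameter to $d=1$. The key preliminary observation is that for $d=1$ the complex reflection group collapses to the symmetric group: since $\Z/1\Z$ is trivial, we have $G(1,1,n) = (\Z/1\Z) \wr \mathfrak{S}_n \cong \mathfrak{S}_n$. Consequently the subgroup $\mathfrak{S}_n$ appearing in Corollary \ref{cor1} coincides with the whole group $G(1,1,n)$, and the restriction $\mathrm{Res}_{\mathfrak{S}_n}^{G(1,1,n)}(E^\lambda)$ is just $E^\lambda$ itself. In particular, $E^\lambda$ is the only direct summand of this restriction.

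First I would invoke Corollary \ref{cor1} with $d=1$: it states that $\lambda \in \mathcal{R}(1,n)$ if and only if every direct summand of $\mathrm{Res}_{\mathfrak{S}_n}^{G(1,1,n)}(E^\lambda)$ is labelled by a (Young $1$-)diagram with at most two columns. By the observation above, the only summand is $E^\lambda$, so this condition reduces to requiring that the Young diagram of $\lambda$ itself have at most two columns. Since the number of columns of $Y(\lambda)$ equals the length $\lambda_1$ of its longest (first) row, this is exactly the condition $\lambda_1 \leq 2$, yielding $\mathcal{R}(1,n) = \{\lambda \in \mathcal{P}(n) \mid \lambda_1 \leq 2\}$. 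The final sentence of the statement is then immediate, as ${\rm YTL}_{1,n}(u)$ coincides with ${\rm TL}_n(u)$ by definition, so $\mathcal{R}(1,n)$ is precisely the labelling set of ${\rm Irr}({\rm TL}_n(u))$. Alternatively, one may argue straight from Proposition \ref{1} and the branching discussion preceding Corollary \ref{cor1}: for $d=1$ the trivial representation occurs in $\mathrm{Res}_{\mathfrak{S}_3}^{\mathfrak{S}_n}(E^\lambda)$ precisely when $Y(\lambda)$ has more than two columns, and Proposition \ref{1} asserts $\lambda \in \mathcal{R}(1,n)$ exactly when it does not.

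The only point demanding any care—and it is not a genuine obstacle—is the identification $G(1,1,n) \cong \mathfrak{S}_n$ together with the resulting triviality of the restriction to $\mathfrak{S}_n$; once this is recorded, the statement is immediate. The substantive content, namely the passage from the quotient condition (\ref{quotient}) to a statement about restrictions and the determination of which restrictions contain the trivial representation, has already been done in Proposition \ref{1} and Corollary \ref{cor1}. The present corollary is merely their specialisation to $d=1$, recovering the classical characterisation of Temperley--Lieb representations by at-most-two-column diagrams.
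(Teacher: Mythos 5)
Your proof is correct and follows the paper's own route: the paper derives this corollary directly from Corollary \ref{cor1} by specialising to $d=1$, where $G(1,1,n)\cong\mathfrak{S}_n$ makes the restriction trivial, exactly as you argue. Your write-up simply makes explicit the specialisation the paper leaves implicit (including the equivalent route via Proposition \ref{1} and the branching discussion), so nothing further is needed.
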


Unfortunately, for $d>1$, the restriction from $G(d,1,n)$ to $\mathfrak{S}_n$ is far more complicated than in the symmetric group case. As we will see in the next section, the combinatorics of the restriction are governed by the Littlewood--Richardson coefficients, and it is in general  difficult to judge which irreducible representations appear in  $ \mathrm{Res}_{\mathfrak{S}_n}^{G(d,1,n)}(E^{\lambda})$. Nevertheless,  the study of the Littlewood--Richardson coefficients in Section \ref{sec3} will allow us to obtain the answer to our problem, that is,  determine $\mathcal{R}(d,n)$ for any $d \in \N$.

\section{Restriction to $\mathfrak{S}_n$ and the Littlewood--Richardson rule}\label{sec3}

Our aim in this section will be to study the restriction of representations from $G(d,1,n)$ to $\mathfrak{S}_n$.
We will then use Corollary \ref{cor1} to determine the set $\mathcal{R}(d,n)$, and thus the irreducible representations of the Yokonuma--Temperley--Lieb algebra ${\rm YTL}_{d,n}(u)$.

\subsection{Induction, restriction and the Littlewood--Richardson coefficients}\label{indres}

The Littlewood--Richardson coefficients control the induction to $\mathfrak{S}_n$ from Young subgroups. Let $k,l \in \mathbb{N}$ be such that $k+l=n$.
Let $\lambda \in \mathcal{P}(k)$ and $\mu \in \mathcal{P}(l)$. Then the Littlewood--Richardson rule yields (see, for example, \cite[\S 3]{St}):
\begin{equation}
{\rm Ind}_{\mathfrak{S}_k \times \mathfrak{S}_l}^{\mathfrak{S}_n} (E^{\lambda} \boxtimes  E^{\mu})= \sum_{\nu \in \mathcal{P}(n)} c_{\lambda,\mu}^{\nu} E^{\nu}.
\end{equation}

More generally, if $\lambda^{(i)} \in \mathcal{P}(k_i)$ for $0 \leq i \leq d-1$ and $\sum_{i=0}^{d-1} k_i=n$, then
\begin{equation}\label{Youngsub}
 \,\,\,\,{\rm Ind}_{H}^{\mathfrak{S}_n} (E^{\lambda^{(0)}} \boxtimes  E^{\lambda^{(1)}}  \boxtimes  \cdots  \boxtimes  E^{\lambda^{(d-1)}}  )= 
 \sum_{\nu^{(i)} \in \mathcal{P}(k_0+\cdots+k_i)} c_{\lambda^{(0)},\lambda^{(1)}}^{\nu^{(1)}}c_{\nu^{(1)},\lambda^{(2)}}^{\nu^{(2)}}\ldots c_{\nu^{(d-2)},\lambda^{(d-1)}}^{\nu^{(d-1)}}E^{\nu^{(d-1)}},
  \end{equation}
  where $H:=\prod_{i=0}^{d-1} \mathfrak{S}_{k_i}$.
   Note that $\nu^{(d-1)} \in \mathcal{P}(n)$. Note also that, due to the commutativity and associativity of Littlewood--Richardson coefficients (Relations (\ref{comm}) and (\ref{assoc})), if
   $E^{\nu^{(d-1)}}$ appears with non-zero coefficient in (\ref{Youngsub}), then
 $\nu^{(d-1)}/\lambda^{(i)} \in \mathcal{S}$ \,for all $i=0,1,\ldots,d-1$.

Now let $G$ be any finite group.
In order to obtain a complete set of irreducible representations for the wreath product $G \wr \mathfrak{S}_n$, a problem originally solved by Specht \cite{Spe}, one needs to consider representations induced from wreath analogues of Young subgroups. In the case where $G$ is the cyclic group of order $d$, we have the following: Let   $\lambda=(\lambda^{(0)},\lambda^{(1)},\ldots,\lambda^{(d-1)})
 \in \mathcal{P}(d,n)$, and let us consider the irreducible representation $E^\lambda$ of $G(d,1,n) \cong (\Z/d\Z )\wr \mathfrak{S}_n \cong (\Z/d\Z )^n\rtimes \mathfrak{S}_n$. For all $i=0,1,\ldots,d-1$, set $k_i:=|\lambda^{(i)}|$. Then, by Specht's Theorem (see, for example, \cite[Theorem 4.1]{St}), we have
\begin{equation}
E^{\lambda}={\rm Ind}_{\widetilde{H}}^{G(d,1,n)}(E^{(\lambda^{(0)},\emptyset,\emptyset,\ldots,\emptyset)} \boxtimes  E^{(\emptyset,\lambda^{(1)},\emptyset,\ldots,\emptyset)}  \boxtimes  \cdots  \boxtimes  E^{(\emptyset,\emptyset,\emptyset,\ldots,\lambda^{(d-1)})}),
\end{equation}
 where $\widetilde{H}:=\prod_{i=0}^{d-1} G(d,1,k_i)$, which is naturally a subgroup of $G(d,1,n)$.
Now note that $(\Z/d\Z )^n \subseteq \widetilde{H}$. So we have $G(d,1,n)=\mathfrak{S}_n\widetilde{H}$ and $\mathfrak{S}_n \cap \widetilde{H}=H$, where  $H:=\prod_{i=0}^{d-1} \mathfrak{S}_{k_i}$.
Hence, Mackey's formula yields
\begin{equation}
{\rm Res}_{\mathfrak{S}_n}^{G(d,1,n)} (E^{\lambda})={\rm Ind}_H^{\mathfrak{S}_n} (E^{\lambda^{(0)}} \boxtimes  E^{\lambda^{(1)}}  \boxtimes  \cdots  \boxtimes  E^{\lambda^{(d-1)}}).
\end{equation}
Applying (\ref{Youngsub}) yields the following formula for the restriction of  irreducible representations from $G(d,1,n)$ to $\mathfrak{S}_n$:
\begin{equation}\label{restr}
{\rm Res}_{\mathfrak{S}_n}^{G(d,1,n)} (E^{\lambda})= \sum_{\nu^{(i)}\in \mathcal{P}(k_0+\cdots+k_i)} c_{\lambda^{(0)},\lambda^{(1)}}^{\nu^{(1)}}c_{\nu^{(1)},\lambda^{(2)}}^{\nu^{(2)}}\ldots c_{\nu^{(d-2)},\lambda^{(d-1)}}^{\nu^{(d-1)}}E^{\nu^{(d-1)}}.
  \end{equation}
  
  Let us denote by $c^\nu_\lambda$ the coefficient of $E^{\nu}$ in the above formula, for all $ \nu \in \mathcal{P}(n)$.
  Lemma \ref{Lemma1} can be then  generalised as follows:
  
  \begin{lem}\label{Lemma2}
Let $\lambda=(\lambda^{(0)},\lambda^{(1)},\ldots,\lambda^{(d-1)})
 \in \mathcal{P}(d,n)$. Set $\alpha:=\sum_{i=0}^{d-1}\lambda^{(i)}_1$. 
 Then
\begin{enumerate}
\item for all $\nu \in  \mathcal{P}(n)$ with  $\nu_1 > \alpha$, we have $c^{\nu}_{\lambda}=0$; \smallbreak
\item there exists $\nu \in  \mathcal{P}(n)$ such that $\nu_1 = \alpha$ and $c^{\nu}_{\lambda} > 0$.
\end{enumerate}
\end{lem}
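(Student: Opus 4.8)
The plan is to reduce both statements to the two-partition case already established in Lemma \ref{Lemma1}, exploiting the fact that, by formula (\ref{restr}), the coefficient $c^\nu_\lambda$ is a sum over intermediate partitions $\nu^{(1)},\ldots,\nu^{(d-2)}$ of products of ordinary Littlewood--Richardson coefficients. Throughout I write $\alpha_k:=\sum_{j=0}^{k}\lambda^{(j)}_1$ for $0\le k\le d-1$, so that $\alpha_{d-1}=\alpha$, and I set $\nu^{(0)}:=\lambda^{(0)}$ and $\nu^{(d-1)}:=\nu$; with these conventions the $k$-th factor in a summand of (\ref{restr}) is $c_{\nu^{(k-1)},\lambda^{(k)}}^{\nu^{(k)}}$.

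For part (1), I would show that every individual summand of (\ref{restr}) vanishes whenever $\nu_1>\alpha$. Fix a sequence $\nu^{(1)},\ldots,\nu^{(d-1)}$ with $\nu^{(d-1)}=\nu$ and suppose the corresponding product $c_{\nu^{(0)},\lambda^{(1)}}^{\nu^{(1)}}\cdots c_{\nu^{(d-2)},\lambda^{(d-1)}}^{\nu^{(d-1)}}$ is non-zero; since Littlewood--Richardson coefficients are non-negative, each factor must then be strictly positive. Applying Lemma \ref{Lemma1}(1) to the $k$-th factor gives $\nu^{(k)}_1\le\nu^{(k-1)}_1+\lambda^{(k)}_1$. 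A straightforward induction on $k$, with base case $\nu^{(0)}_1=\lambda^{(0)}_1=\alpha_0$, then yields $\nu^{(k)}_1\le\alpha_k$ for all $k$, and in particular $\nu_1=\nu^{(d-1)}_1\le\alpha_{d-1}=\alpha$. Hence no non-zero summand can have $\nu_1>\alpha$, so $c^\nu_\lambda=0$.

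For part (2), I would exhibit a single explicit summand that is strictly positive. Define partitions $\nu^{(k)}$ for $0\le k\le d-1$ by $\nu^{(k)}_i:=\sum_{j=0}^{k}\lambda^{(j)}_i$. Each $\nu^{(k)}$ is weakly decreasing, being a sum of weakly decreasing sequences, hence a genuine partition; moreover $\nu^{(k-1)}\subseteq\nu^{(k)}$, so each skew shape $\nu^{(k)}/\nu^{(k-1)}$ is defined, and $|\nu^{(k)}|=\sum_{j=0}^{k}|\lambda^{(j)}|$ as required in (\ref{restr}). Since $\nu^{(k)}_i=\nu^{(k-1)}_i+\lambda^{(k)}_i$ for every $i$, Lemma \ref{Lemma1}(2), applied with $\nu^{(k-1)}$ and $\lambda^{(k)}$ in the roles of $\lambda$ and $\mu$, shows $c_{\nu^{(k-1)},\lambda^{(k)}}^{\nu^{(k)}}>0$ for each $k$. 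Taking $\nu:=\nu^{(d-1)}$, the product of these coefficients is a strictly positive summand of $c^\nu_\lambda$ in (\ref{restr}), so $c^\nu_\lambda>0$; and by construction $\nu_1=\sum_{j=0}^{d-1}\lambda^{(j)}_1=\alpha$.

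The argument is essentially a telescoping of Lemma \ref{Lemma1} across the $d$ components, so I do not anticipate a serious obstacle. The only points demanding care are the verification in part (2) that the explicitly constructed $\nu^{(k)}$ form an ascending chain of honest Young diagrams with the correct sizes, and the bookkeeping in the inductive inequality of part (1) ensuring that the first-row bounds accumulate exactly to $\alpha$.
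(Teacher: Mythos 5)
Your proposal is correct and follows essentially the same route as the paper: both parts telescope Lemma \ref{Lemma1} across the $d$ components, part (1) by inductively bounding first rows of the intermediate partitions in each non-vanishing summand of (\ref{restr}), and part (2) by producing a chain of intermediate partitions whose first rows add up exactly to $\alpha$. The only (harmless) difference is in part (2): you define the intermediate partitions explicitly as row-wise sums, which strictly requires the construction inside the \emph{proof} of Lemma \ref{Lemma1}(2) (the statement alone only asserts existence of \emph{some} partition with the right first row and positive coefficient), whereas the paper sidesteps this by invoking the existence statement inductively.
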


\begin{proof} (1) We have $c^{\nu}_{\lambda}=c_{\lambda^{(0)},\lambda^{(1)}}^{\nu^{(1)}}c_{\nu^{(1)},\lambda^{(2)}}^{\nu^{(2)}}\ldots c_{\nu^{(d-2)},\lambda^{(d-1)}}^{\nu^{(d-1)}}$, where
$\nu^{(d-1)}=\nu$.
 If $c^{\nu}_{\lambda} \neq 0$, then, by Lemma \ref{Lemma1}(1), we must have
 $$\nu_1^{(i)} \leq    \nu_1^{(i-1)}+\lambda_1^{(i)}  \,\,\,\,\,\text{ for all } i=1,2,\ldots,d-1,$$
 where we take $\nu^{(0)}:=\lambda^{(0)}$.
 We deduce that
\begin{center}
$\nu^{(i)}_1 \leq \lambda^{(0)}_1+\cdots+\lambda^{(i)}_1$ \,\,\,\,\,\text{ for all }  $i=1,2,\ldots,d-1$.
\end{center}
In particular, for $i=d-1$, we obtain $\nu_1 \leq \alpha$.\\
(2) Set again $\nu^{(0)}:=\lambda^{(0)}$.
Following Lemma $2$(2),  we can define inductively $\nu^{(i)} \in \mathcal{P}(|\nu^{(i-1)}|+|\lambda^{(i)}|)$, 
for all $i=1,2,\ldots,d-1$, such that
$$\nu_1^{(i)}=\nu_1^{(i-1)}+\lambda_1^{(i)}  \,\,\,\,\,\text{ and }\,\,\,\,\, c_{\nu^{(i-1)},\lambda^{(i)}}^{\nu^{(i)}}>0.$$
We deduce that
$\nu^{(i)} \in \mathcal{P}(|\lambda^{(0)}|+\cdots+|\lambda^{(i)}|)$ and that
$\nu^{(i)}_1 = \lambda^{(0)}_1+\cdots+\lambda^{(i)}_1$, for all $i=1,2,\ldots,d-1$.

Set $\nu:=\nu^{(d-1)}$. We then have $\nu \in \mathcal{P}(n)$, $\nu_1 = \alpha$ 
 and
  \begin{center}
  $c^{\nu}_{\lambda}=c_{\lambda^{(0)},\lambda^{(1)}}^{\nu^{(1)}}c_{\nu^{(1)},\lambda^{(2)}}^{\nu^{(2)}}\ldots c_{\nu^{(d-2)},\lambda^{(d-1)}}^{\nu^{(d-1)}} > 0$.
  \end{center}
\end{proof}

\subsection{Determination of $\mathcal{R}(d,n)$}

In order to obtain a description of $\mathcal{R}(d,n)$, we will combine Corollary \ref{cor1} with Lemma \ref{Lemma2}.

\begin{prop}\label{prop2}
Let $\lambda=(\lambda^{(0)},\lambda^{(1)},\ldots,\lambda^{(d-1)})
 \in \mathcal{P}(d,n)$. All direct summands of $\mathrm{Res}_{\mathfrak{S}_n}^{G(d,1,n)}(E^\lambda)$ are labelled by Young $d$-diagrams with at most two columns if and only if 
$\sum_{i=0}^{d-1}\lambda^{(i)}_1 \leq 2$.
\end{prop}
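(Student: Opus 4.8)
The plan is to read off both directions directly from Lemma \ref{Lemma2}, after making one translation explicit. By formula (\ref{restr}), the representation $\mathrm{Res}_{\mathfrak{S}_n}^{G(d,1,n)}(E^\lambda)$ decomposes as $\sum_{\nu \in \mathcal{P}(n)} c^\nu_\lambda E^\nu$, so its irreducible summands are labelled exactly by those ordinary partitions $\nu$ of $n$ with $c^\nu_\lambda > 0$. Since the number of columns of the Young diagram $Y(\nu)$ equals the length $\nu_1$ of its first row, the condition in the statement --- that every summand have at most two columns --- is equivalent to the arithmetic condition
$$ \nu_1 \leq 2 \quad \text{for every } \nu \in \mathcal{P}(n) \text{ with } c^\nu_\lambda > 0. $$
Thus the whole proposition reduces to showing that this holds if and only if $\alpha = \sum_{i=0}^{d-1}\lambda^{(i)}_1 \leq 2$.

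For the direction assuming $\alpha \leq 2$, I would invoke Lemma \ref{Lemma2}(1): any $\nu$ contributing a summand has $c^\nu_\lambda \neq 0$, hence $\nu_1 \leq \alpha \leq 2$, so $Y(\nu)$ has at most two columns. For the converse, I would invoke Lemma \ref{Lemma2}(2), which produces a partition $\nu \in \mathcal{P}(n)$ with $\nu_1 = \alpha$ and $c^\nu_\lambda > 0$; this $\nu$ labels an actual summand, so the hypothesis forces $\alpha = \nu_1 \leq 2$. In other words, Lemma \ref{Lemma2} says precisely that $\alpha$ is the maximum of $\nu_1$ over all summands, and the proposition is the observation that this maximum is $\leq 2$ exactly when $\alpha \leq 2$.

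I do not expect a genuine obstacle here, since the two parts of Lemma \ref{Lemma2} have been engineered to supply the two implications; the only content is the bookkeeping that identifies ``at most two columns'' with the inequality $\nu_1 \leq 2$ and identifies the summand labels with the $\nu$ satisfying $c^\nu_\lambda > 0$. The real work sits upstream, in deriving the restriction formula (\ref{restr}) and in the Littlewood--Richardson estimates of Lemma \ref{Lemma1} that underlie Lemma \ref{Lemma2}; granting those, the present statement is essentially immediate.
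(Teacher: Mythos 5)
Your proposal is correct and follows essentially the same route as the paper: the authors also deduce both directions directly from Lemma \ref{Lemma2}, using part (1) when $\alpha\leq 2$ and part (2) (in contrapositive form, assuming $\alpha>2$) for the converse. The only difference is presentational --- you spell out the bookkeeping identifying summands with the $\nu$ satisfying $c^{\nu}_{\lambda}>0$, which the paper leaves implicit.
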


\begin{proof} Set $\alpha:=\sum_{i=0}^{d-1}\lambda^{(i)}_1$. First suppose that $\alpha \leq 2$, and let $E^\nu$ be a direct summand of $\mathrm{Res}_{\mathfrak{S}_n}^{G(d,1,n)}(E^\lambda)$ for some $\nu \in  \mathcal{P}(n)$. 
By Lemma \ref{Lemma2}(1), if $\nu_1 >2 \geq \alpha$, then $c^{\nu}_{\lambda}=0$. So we must have $\nu_1 \leq 2$.

On the other hand, if $\alpha>2$, then, by Lemma \ref{Lemma2}(2),  there exists $\nu \in  \mathcal{P}(n)$ such that $\nu_1 = \alpha>2$ and $c^{\nu}_{\lambda} > 0$. 
Thus, $E^\nu$ is a direct summand of $\mathrm{Res}_{\mathfrak{S}_n}^{G(d,1,n)}(E^\lambda)$  whose Young diagram has more than two columns.
\end{proof}

Now, Proposition \ref{prop2} combined with Corollary \ref{cor1} yields:

\begin{thm}\label{Rd}
Let $n \geq 3$. We have 
\begin{equation}
\mathcal{R}(d,n) = \left\{\lambda \in \mathcal{P}(d,n)\,\,\,\left|\,\,\, \sum_{i=0}^{d-1}\lambda^{(i)}_1 \leq 2\right\}\right. .
\end{equation}
That is,
$E^\lambda \in {\rm Irr}({\rm YTL}_{d,n}(u))$  if and only if the Young $d$-diagram of $\lambda$ has at most two columns in total. 
\end{thm}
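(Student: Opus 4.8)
The plan is to obtain Theorem~\ref{Rd} as a direct corollary of the two preceding results, namely Corollary~\ref{cor1} and Proposition~\ref{prop2}, so essentially no fresh computation is required. Corollary~\ref{cor1} tells us that $\lambda \in \mathcal{R}(d,n)$ precisely when every irreducible constituent $E^\nu$ of the restriction $\mathrm{Res}_{\mathfrak{S}_n}^{G(d,1,n)}(E^\lambda)$ is labelled by a partition $\nu$ whose Young diagram has at most two columns. Proposition~\ref{prop2} then translates this condition on \emph{all} the summands into the single numerical condition $\sum_{i=0}^{d-1}\lambda^{(i)}_1 \le 2$ on the multipartition $\lambda$ itself. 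Chaining these two equivalences gives exactly $\mathcal{R}(d,n)=\{\lambda \in \mathcal{P}(d,n)\,|\,\sum_{i=0}^{d-1}\lambda^{(i)}_1 \le 2\}$, which is the assertion of the theorem.

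First I would recall that, by the definition of $\mathcal{R}(d,n)$ together with Proposition~\ref{1} and Corollary~\ref{cor1}, the membership $\lambda \in \mathcal{R}(d,n)$ is equivalent to the statement that \emph{no} constituent of the restriction has a Young diagram with three or more columns. This is the point where the transfer from the algebra $\mathrm{YTL}_{d,n}(u)$ to the group-algebra quotient of $G(d,1,n)$, and thence to the restriction to $\mathfrak{S}_n$, has already been carried out in the earlier sections via Tits's deformation theorem and Mackey's formula. I would simply invoke Corollary~\ref{cor1} verbatim, since the hypothesis $n\ge 3$ it requires is already assumed in the theorem.

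Next I would apply Proposition~\ref{prop2}, whose conclusion is phrased so as to dovetail precisely with Corollary~\ref{cor1}: it asserts that the condition ``all direct summands of $\mathrm{Res}_{\mathfrak{S}_n}^{G(d,1,n)}(E^\lambda)$ have at most two columns'' holds if and only if $\alpha:=\sum_{i=0}^{d-1}\lambda^{(i)}_1 \le 2$. Substituting this equivalence into the one furnished by Corollary~\ref{cor1} yields the claimed set-theoretic description of $\mathcal{R}(d,n)$, and the final ``that is'' sentence about $\mathrm{Irr}(\mathrm{YTL}_{d,n}(u))$ follows immediately from the bijection $\lambda \leftrightarrow \varrho^\lambda$ established in the subsection on representations of $\mathrm{YTL}_{d,n}(u)$.

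Since the genuine mathematical content has been discharged in Lemma~\ref{Lemma2} and Proposition~\ref{prop2}, there is really no obstacle left at the level of the theorem itself; the only thing to be careful about is the bookkeeping that ``at most two columns in total'' for the Young $d$-diagram is exactly the condition $\sum_{i=0}^{d-1}\lambda^{(i)}_1 \le 2$, because the number of columns of the $i$-th component $\lambda^{(i)}$ equals its first part $\lambda^{(i)}_1$, and the columns across the $d$ components add up. Thus the proof is a two-line citation of Corollary~\ref{cor1} and Proposition~\ref{prop2}, and I would write it as such.
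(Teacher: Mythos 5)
Your proposal is correct and coincides with the paper's own proof, which derives Theorem~\ref{Rd} in exactly this way: the statement is obtained by chaining Corollary~\ref{cor1} (membership in $\mathcal{R}(d,n)$ is equivalent to all summands of $\mathrm{Res}_{\mathfrak{S}_n}^{G(d,1,n)}(E^\lambda)$ having at most two columns) with Proposition~\ref{prop2} (that condition is equivalent to $\sum_{i=0}^{d-1}\lambda^{(i)}_1 \leq 2$). Your additional remark on the column-counting bookkeeping is accurate and harmless.
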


\subsection{Pieri's rule for type $G(d,1,n)$} The following result, known as ``Pieri's rule'', derives from the formulas in Subsection \ref{indres} with the use of Lemma \ref{triv}.

\begin{prop}\label{Pieri} Let $n = k + l$ where $k\geq 0$, $l\geq1$. Let $\mu \in \mathcal{P}(d,k)$ and $\lambda \in \mathcal{P}(d,n)$. Then
$E^{\lambda}$ is a direct summand of $\mathrm{Ind}_{G(d,1,k) \times \mathfrak{S}_l}^{G(d,1,n)}(E^{\mu} \boxtimes E^{(l)})$
if and only if  the Young $d$-diagram of $\lambda$ can be obtained from that of $\mu$ by adding  $l$ boxes, with no two in the same column. 
\end{prop}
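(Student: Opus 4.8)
The plan is to reduce this $G(d,1,n)$-statement to the symmetric-group Pieri rule already established in Lemma \ref{triv}, using the restriction/induction machinery of Subsection \ref{indres}. I would begin by writing $\mu=(\mu^{(0)},\ldots,\mu^{(d-1)})$ with $k_i:=|\mu^{(i)}|$, and observe that the representation $E^{(l)}$ of $\mathfrak{S}_l$ is, as a representation of $G(d,1,l)$, the one labelled by the $d$-partition $((l),\emptyset,\ldots,\emptyset)$ (the trivial representation, by the Remark following the representations of ${\rm Y}_{d,n}(u)$). Hence the induced module $\mathrm{Ind}_{G(d,1,k)\times \mathfrak{S}_l}^{G(d,1,n)}(E^\mu \boxtimes E^{(l)})$ is exactly the Specht-type induction attaching the extra $l$ boxes to the $0$-th component. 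Concretely, comparing with equation (\ref{Youngsub}) and Specht's Theorem, the multiplicity of $E^\lambda$ in this induced module equals
\begin{equation}
\sum c^{\nu^{(0)}}_{\mu^{(0)},(l)}\, c^{\nu^{(1)}}_{\nu^{(0)},\mu^{(1)}}\cdots c^{\nu^{(d-1)}}_{\nu^{(d-2)},\mu^{(d-1)}},
\end{equation}
where the sum is over intermediate partitions and $\nu^{(d-1)}=\lambda^{(d-1)}$; but since only the $0$-th component is altered while components $1,\ldots,d-1$ are unchanged, all the $c^{\nu^{(i)}}_{\nu^{(i-1)},\mu^{(i)}}$ for $i\geq 1$ force $\nu^{(i)}$ to agree with $\mu$ on those components, and the whole multiplicity collapses to the single coefficient $c^{\lambda^{(0)}}_{\mu^{(0)},(l)}$.

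With this reduction in hand, the key step is to apply Lemma \ref{triv} directly. That lemma states precisely that $c^{\lambda^{(0)}}_{\mu^{(0)},(l)}>0$ if and only if the Young diagram of $\lambda^{(0)}$ is obtained from that of $\mu^{(0)}$ by adding $l$ boxes, no two in the same column. Since $E^\lambda$ is a direct summand of the induced module exactly when this multiplicity is nonzero, and since the other components $\lambda^{(i)}=\mu^{(i)}$ for $i\geq 1$ are unchanged, the condition ``$E^\lambda$ is a summand'' translates into: the Young $d$-diagram of $\lambda$ is obtained from that of $\mu$ by adding $l$ boxes to the components, with no two of the added boxes in the same column of the same component. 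This is exactly the stated geometric condition.

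The main obstacle, and the point requiring care, is justifying the claim that inducing against the trivial $\mathfrak{S}_l$-representation adds boxes only to the $0$-th component and reduces the iterated product of Littlewood--Richardson coefficients to the single coefficient $c^{\lambda^{(0)}}_{\mu^{(0)},(l)}$. One must verify that the subgroup $G(d,1,k)\times \mathfrak{S}_l$ sits inside $G(d,1,n)$ compatibly with the Specht decomposition so that Mackey's formula and (\ref{Youngsub}) apply as in Subsection \ref{indres}; the natural way is to note that $E^{(l)}$ as a $G(d,1,l)$-module is $((l),\emptyset,\ldots,\emptyset)$, so the induced module is $E^{\mu'}$ for the $d$-partition $\mu'=((\mu^{(0)}\text{-boxes and the }l\text{ new boxes merged}),\mu^{(1)},\ldots)$ only after decomposition, and that the bookkeeping of which component receives the boxes follows from the remark after (\ref{Youngsub}) that $\nu^{(d-1)}/\lambda^{(i)}\in \mathcal{S}$ for all $i$. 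Once this structural reduction is secured, the remainder is an immediate invocation of Lemma \ref{triv}, so the substance of the proof lies entirely in the reduction rather than in any fresh combinatorial argument.
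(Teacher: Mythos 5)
Your reduction contains a fatal error at its key step: the claim that inducing $E^{\mu}\boxtimes E^{(l)}$ adds boxes only to the $0$-th component, so that the multiplicity of $E^{\lambda}$ collapses to $c^{\lambda^{(0)}}_{\mu^{(0)},(l)}$ with $\lambda^{(i)}=\mu^{(i)}$ for $i\geq 1$. The subgroup here is $G(d,1,k)\times \mathfrak{S}_l$, \emph{not} $G(d,1,k)\times G(d,1,l)$, and $E^{(l)}$ is a representation of $\mathfrak{S}_l$ only; you cannot identify it with the trivial $G(d,1,l)$-module $E^{((l),\emptyset,\ldots,\emptyset)}$ inside the induction. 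Writing the induction in stages,
\[
\mathrm{Ind}_{G(d,1,k)\times \mathfrak{S}_l}^{G(d,1,n)}\bigl(E^{\mu}\boxtimes E^{(l)}\bigr)
=\mathrm{Ind}_{G(d,1,k)\times G(d,1,l)}^{G(d,1,n)}\Bigl(E^{\mu}\boxtimes \mathrm{Ind}_{\mathfrak{S}_l}^{G(d,1,l)}\bigl(E^{(l)}\bigr)\Bigr),
\]
Frobenius reciprocity together with the restriction formula (\ref{restr}) shows that the multiplicity of $E^{\nu}$ in $\mathrm{Ind}_{\mathfrak{S}_l}^{G(d,1,l)}(E^{(l)})$ equals $c^{(l)}_{\nu}$, which is nonzero (and then equal to $1$) exactly when \emph{every} component $\nu^{(i)}$ is a single row $(l_i)$ with $l_0+\cdots+l_{d-1}=l$. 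So the inner induction is a direct sum over all such $\nu$, not just $((l),\emptyset,\ldots,\emptyset)$. A minimal counterexample to your collapse: $d=2$, $k=0$, $l=1$, where $\mathrm{Ind}_{\mathfrak{S}_1}^{G(2,1,1)}(\text{triv})=\C[\Z/2\Z]=E^{((1),\emptyset)}\oplus E^{(\emptyset,(1))}$, so $E^{(\emptyset,(1))}$ is a summand whose added box sits in component $1$; your formula would miss it. Note also that the statement your argument would actually prove contradicts the Corollary following the Proposition, whose criterion $\sum_{i=0}^{d-1}\lambda^{(i)}_1>2$ involves all components precisely because the added boxes may land in any of them. (Your displayed multiplicity formula also misuses (\ref{Youngsub}): that formula decomposes an induced module into $\mathfrak{S}_n$-irreducibles $E^{\nu^{(d-1)}}$, so equating $\nu^{(d-1)}$ with one component of the $d$-partition $\lambda$ is not meaningful.)

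The correct route --- which is what the paper intends when it says the result ``derives from the formulas in Subsection \ref{indres} with the use of Lemma \ref{triv}'' and points to the type $B_n$ argument in \cite[6.1.9]{gp} --- is: (i) compute $\mathrm{Ind}_{\mathfrak{S}_l}^{G(d,1,l)}(E^{(l)})=\bigoplus E^{((l_0),(l_1),\ldots,(l_{d-1}))}$, the sum over compositions $l_0+\cdots+l_{d-1}=l$; (ii) use the component-wise Littlewood--Richardson rule for wreath products to induce from $G(d,1,k)\times G(d,1,l)$ to $G(d,1,n)$, which gives the multiplicity of $E^{\lambda}$ as
\[
\sum_{l_0+\cdots+l_{d-1}=l}\ \prod_{i=0}^{d-1} c^{\lambda^{(i)}}_{\mu^{(i)},(l_i)};
\]
(iii) apply Lemma \ref{triv} to each factor. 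This multiplicity is positive if and only if for some composition each $\lambda^{(i)}$ is obtained from $\mu^{(i)}$ by adding $l_i$ boxes with no two in the same column, i.e., if and only if the Young $d$-diagram of $\lambda$ is obtained from that of $\mu$ by adding $l$ boxes, no two in the same column --- the stated Pieri rule. The invocation of Lemma \ref{triv} in your second paragraph is the right final step, but it must be applied to every component over all distributions of the $l$ boxes, not to the $0$-th component alone.
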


For $d=1$, the above result is the classical Pieri's rule for the symmetric group (cf.~\cite[6.1.7]{gp}). For $d>1$, the proof is similar to the one for type $B_n \cong G(2,1,n)$ (cf.~\cite[6.1.9]{gp}).

The following corollary (case $l=3$) gives an alternative proof of Theorem \ref{Rd}.

\begin{cor}
 Let $n \geq 3$ and set  $k:=n-3$. Let $\lambda \in \mathcal{P}(d,n)$.
 There exists $\mu
 \in \mathcal{P}(d,k)$ such that
$E^{\lambda}$ is a direct summand of $\mathrm{Ind}_{G(d,1,k) \times \mathfrak{S}_3}^{G(d,1,n)}(E^{\mu} \boxtimes E^{(3)})$
if and only if $\sum_{i=0}^{d-1}\lambda^{(i)}_1 > 2$.
\end{cor}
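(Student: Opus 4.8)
The plan is to reduce the statement to Pieri's rule (Proposition~\ref{Pieri} with $l=3$) and then to read off the column condition by tracking the first row of the relevant partitions. First I would observe that, by Proposition~\ref{Pieri} applied with $l=3$, the existence of some $\mu \in \mathcal{P}(d,k)$ such that $E^{\lambda}$ is a direct summand of $\mathrm{Ind}_{G(d,1,k) \times \mathfrak{S}_3}^{G(d,1,n)}(E^{\mu} \boxtimes E^{(3)})$ is equivalent to the existence of some $\mu$ whose Young $d$-diagram is obtained from that of $\lambda$ by \emph{removing} $3$ boxes, with no two in the same column. Indeed, ``$Y(\lambda)$ is obtained from $Y(\mu)$ by adding $3$ boxes, no two in the same column'' is the same as ``$Y(\mu)$ is obtained from $Y(\lambda)$ by deleting $3$ boxes, no two in the same column''. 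Thus the whole statement becomes a purely combinatorial question about $\lambda$: can one delete $3$ boxes from $Y(\lambda)$, no two in the same column, and still be left with a Young $d$-diagram?

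Next I would reformulate the ``no two in the same column'' deletion condition in terms of columns. Deleting three boxes with no two in the same column means choosing three \emph{distinct} columns of $Y(\lambda)$ (across the $d$ components, counting a column in each component separately) and removing one box from each, in such a way that each removed box is removable, i.e.\ the result in each component is still a partition. The key numerical observation is that the total number of columns available in $Y(\lambda)$, i.e.\ the number of columns that contain at least one box summed over all $d$ components, is exactly $\sum_{i=0}^{d-1}\lambda^{(i)}_1 = \alpha$, since $\lambda^{(i)}_1$ is the number of columns in the $i$-th component. So the condition ``we can remove $3$ boxes, no two in the same column'' forces $\alpha \geq 3$; conversely, I expect that whenever $\alpha \geq 3$ one can always choose a valid removal.

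To finish, I would argue both directions. For the forward direction, if such a $\mu$ exists then the $3$ removed boxes lie in $3$ distinct columns, so $Y(\lambda)$ has at least $3$ columns, giving $\alpha = \sum_{i=0}^{d-1}\lambda^{(i)}_1 \geq 3 > 2$. For the converse, suppose $\alpha > 2$, i.e.\ $\alpha \geq 3$. I would exhibit a concrete removal: in each component $\lambda^{(i)}$, the boxes in the last row of each of the first few columns are candidates; more precisely, removing the bottom-most box of a given column of a component always yields a partition when that box is the last one in its column and lies in a row no shorter than the next, so I would remove boxes from distinct columns working from the rightmost columns inward. Since $\alpha\ge 3$ there are at least $3$ distinct columns spread over the components, and in each such column the lowest occupied box can be removed to leave a partition, so a valid choice of $3$ boxes in distinct columns exists.

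The main obstacle I anticipate is the converse direction: I must guarantee that three boxes can be removed \emph{simultaneously} from three distinct columns while keeping every component a genuine partition. The subtlety is that removability is a local condition (a box is removable only if it is at the end of both its row and its column), so I cannot simply pick the top of the tallest columns; I would instead select, in each chosen column, the lowest box and verify it is removable, and handle the bookkeeping of distributing the three deletions among the components when some components are small (possibly a single column of height one, or empty). Once this case analysis is organised by how the $\alpha \geq 3$ columns are distributed among the components, the argument is routine, but it is where the care is needed.
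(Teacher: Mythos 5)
Your proposal is correct and is essentially the paper's own argument: the paper states this corollary with no further proof, as the case $l=3$ of Pieri's rule (Proposition \ref{Pieri}), relying on exactly your reduction to the combinatorial fact that three boxes, no two in the same column, can be removed from $Y(\lambda)$ leaving a Young $d$-diagram if and only if the total number of columns $\sum_{i=0}^{d-1}\lambda^{(i)}_1$ is at least $3$. Your converse construction is sound provided it is phrased as a simultaneous removal of the bottom boxes of the $l_i$ rightmost columns in each chosen component (with $\sum_i l_i = 3$ and $l_i \leq \lambda^{(i)}_1$) --- as you yourself anticipate, the lowest box of a single column need not be removable on its own (e.g.\ column $1$ of $(2,2)$), but this simultaneous rightmost-inward removal always leaves each component a partition.
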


\section{Dimension of the Yokonuma--Temperley--Lieb algebra}

Since the Yokonuma--Temperley--Lieb algebra ${\rm YTL}_n(u)$ is split semisimple over $\C(u)$, we must have
\begin{equation}
{\rm dim}_{\mathbb{C}(u)}(\C(u){\rm YTL}_{d,n}(u)) = \sum_{\lambda \in \mathcal{R}(d,n)} ({\rm dim}(\varrho^\lambda))^2=\sum_{\lambda \in \mathcal{R}(d,n)} ({\rm dim}(\rho^\lambda))^2= \sum_{\lambda \in \mathcal{R}(d,n)}({\rm dim}(E^\lambda))^2.
\end{equation}
We have that ${\rm dim}(E^\lambda)$ is equal to the number of standard $d$-tableaux of shape $\lambda$.

Now, for $d=1$, it is well-known that the dimension of the classical Temperley--Lieb algebra ${\rm TL}_n(u)$ is given by the {\em $n$-th Catalan number}
\begin{equation}\label{catalan}
C_n:=\frac{1}{n+1}\binom{2n}{n}=\frac{1}{n+1} \sum_{k=0}^n \binom{n}{k}^2.
\end{equation}
We will see that the $n$-th Catalan appears also in the dimension formula of the Yokonuma--Temperley--Lieb algebra  ${\rm YTL}_{d,n}(u)$ for $d>1$. 

 \begin{prop}\label{propdim}
 Let $n \geq 3$. We have
 \begin{equation}\label{dim}
 {\rm dim}_{\mathbb{C}(u)}(\C(u){\rm YTL}_{d,n}(u))=\frac{d\,(nd-n+d+1)}{2}\,C_n -d\,(d-1).
 \end{equation}
 \end{prop}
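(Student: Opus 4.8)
The plan is to compute the sum
$$
{\rm dim}_{\mathbb{C}(u)}(\C(u)\YTL)=\sum_{\lambda \in \mathcal{R}(d,n)}({\rm dim}(E^\lambda))^2
$$
directly, using the explicit description of $\mathcal{R}(d,n)$ from Theorem \ref{Rd}: a $d$-partition $\lambda=(\lambda^{(0)},\ldots,\lambda^{(d-1)})$ lies in $\mathcal{R}(d,n)$ precisely when $\sum_{i}\lambda^{(i)}_1\leq 2$. Since each component $\lambda^{(i)}$ must have at most two columns (indeed the \emph{total} number of columns is at most two), every component is a partition of the form $(2,\ldots,2,1,\ldots,1)$ or empty, so each $\lambda^{(i)}$ is determined by its size $k_i$ together with how many of its rows have length $2$. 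First I would organise the sum according to the tuple of sizes $(k_0,\ldots,k_{d-1})$ with $\sum_i k_i=n$, and according to the column-count distribution.

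The key computational input is a formula for ${\rm dim}(E^\lambda)$, the number of standard $d$-tableaux of shape $\lambda$. By the standard branching, filling a $d$-diagram of total size $n$ with the entries $1,\ldots,n$ so that each component is filled by a standard tableau amounts to choosing which of the $n$ entries go into each component (in order) and then a standard filling of each component; hence
$$
{\rm dim}(E^\lambda)=\binom{n}{k_0,k_1,\ldots,k_{d-1}}\prod_{i=0}^{d-1} f^{\lambda^{(i)}},
$$
where $f^{\lambda^{(i)}}$ is the number of standard tableaux of the ordinary shape $\lambda^{(i)}$. For a single partition with at most two columns, $f^{\lambda^{(i)}}$ is itself a binomial coefficient (a ballot number), which is where the Catalan number ultimately enters via the identity $C_n=\frac{1}{n+1}\sum_{k}\binom{n}{k}^2$ quoted in (\ref{catalan}). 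The plan is to split the count in $\mathcal{R}(d,n)$ into three regimes: (i) the case where all of $\lambda$'s boxes lie in a single column (so each component is a single column); (ii) the case of exactly two columns distributed among the components; and (iii) small boundary cases (such as the empty or near-empty components), which account for the correction term $-d(d-1)$.

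The main obstacle will be assembling the pieces into the stated closed form $\frac{d(nd-n+d+1)}{2}C_n-d(d-1)$ without error. Concretely, after substituting the product formula into $\sum_\lambda({\rm dim}E^\lambda)^2$ and summing over the allowed column distributions, one obtains an expression involving sums of products of binomial coefficients; the crux is to recognise, via Vandermonde-type convolutions, that the two-column contributions collapse to a multiple of $C_n$ with coefficient linear in $n$ and quadratic in $d$. I would therefore first handle $d=1$ as a sanity check (reproducing ${\rm TL}_n(u)$ of dimension $C_n$, which forces the leading coefficient to equal $1$ at $d=1$), and then track exactly how the $d$ components and the redistribution of the (at most) two columns among them multiply the Catalan contribution. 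The boundary term $-d(d-1)$ should arise from over- or under-counting configurations where a component is empty or contributes no genuine second column; I would isolate and verify this correction last, once the leading $\frac{d(nd-n+d+1)}{2}C_n$ term is established.
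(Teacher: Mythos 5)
Your plan is correct and follows essentially the same route as the paper: there, too, one evaluates $\sum_{\lambda\in\mathcal{R}(d,n)}(\dim E^\lambda)^2$ by splitting $\mathcal{R}(d,n)$ into the $d$-partitions supported on a single component (contributing $d\,C_n$, by quoting $\dim{\rm TL}_n(u)=C_n$) and those with two single-column components of sizes $k$ and $n-k$ with $\dim E^\lambda=\binom{n}{k}$ (contributing $\binom{d}{2}\sum_{k=1}^{n-1}\binom{n}{k}^2$), and then applies the identity $(n+1)C_n=\sum_{k=0}^{n}\binom{n}{k}^2$. Your intuition about the correction term is exactly right: $-d(d-1)$ arises from excluding the endpoint cases $k=0,n$ (where one of the two column components would be empty), i.e.\ from $\sum_{k=1}^{n-1}\binom{n}{k}^2=(n+1)C_n-2$.
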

 \begin{proof}
 Following the description of the $d$-partitions in $\mathcal{R}(d,n)$ by Theorem \ref{Rd}, we have that 
 $$\mathcal{R}(d,n)= \mathcal{R}_1(d,n) \sqcup \mathcal{R}_2(d,n),$$
 where
 $$\mathcal{R}_1(d,n)=\left\{ \lambda \in \mathcal{P}(d,n) \,\left|\, \exists\, i \in \{0,1,\ldots,d-1\} \text { such that } \lambda^{(i)} \in \mathcal{R}(1,n) \text{ and }  \lambda^{(j)}=\emptyset,\,\forall\,  
 j \neq i \right\}\right.$$
and
$$\mathcal{R}_2(d,n)=\left\{ \lambda \in \mathcal{P}(d,n) \,\left|\, \exists\,  i_1\neq i_2 \in \{0,1,\ldots,d-1\} \text { such that } \lambda_1^{(i_1)}= \lambda_1^{(i_2)}=1 \text{ and }  \lambda^{(j)}=\emptyset ,\,\forall\,  
 j \neq i_1,i_2 \right\}\right. .$$
 We have
 $$ \sum_{\lambda \in \mathcal{R}_1(d,n)}({\rm dim}(E^\lambda))^2= d \, {\rm dim}_{\mathbb{C}(u)}(\C(u){\rm TL}_{n}(u))=d\,C_n.$$
 So it remains to calculate
 $$\sum_{\lambda \in \mathcal{R}_2(d,n)}({\rm dim}(E^\lambda))^2.$$
 
 Let $\lambda \in \mathcal{R}_2(d,n)$. Then there exist $ i_1 \neq i_2 \in \{0,1,\ldots,d-1\}$ such that  $\lambda_1^{(i_1)}= \lambda_1^{(i_2)}=1$ and $\lambda^{(j)}=\emptyset$, for all $j \neq i_1,i_2$. Assume that $\lambda^{(i_1)}=(1,1,\ldots,1)$ has $k$ parts, for some $k \in \{1,2,\ldots,n-1\}$. Then $\lambda^{(i_2)}=(1,1,\ldots,1)$ has $n-k$ parts, and
$${\rm dim}(E^\lambda)=\binom{n}{k}.$$
 Since there are $\binom{d}{2}$ choices for the pair $(i_1,i_2)$, we conclude that
 $$\sum_{\lambda \in \mathcal{R}_2(d,n)}({\rm dim}(E^\lambda))^2= \binom{d}{2} \, \sum_{k=1}^{n-1} \binom{n}{k}^2.$$
 Thus, we have
  $${\rm dim}_{\mathbb{C}(u)}(\C(u){\rm YTL}_{d,n}(u))=d\,C_n + \frac{d(d-1)}{2} \, \sum_{k=1}^{n-1} \binom{n}{k}^2.$$
 Due to (\ref{catalan}), we can replace the sum in the above formula by $(n+1)\,C_n-2$; this yields (\ref{dim}). 
 \end{proof}

\section{A basis for the Yokonuma--Temperley--Lieb algebra}\label{sectionbasis}

In this section we will construct an explicit basis of the Yokonuma--Temperley--Lieb algebra. In order to do this, we start from a natural spanning set, then remove as many elements as we can and check that the number of elements that remain is equal to the dimension of the algebra. For this, we will need to use  induction on the degree of words in ${\rm Y}_{d,n}(u)$: As we saw at the end of \S\ref{YH algebra}, every word $w$ in ${\rm Y}_{d,n}(u)$ splits into a `framing part'
and a  `braiding part'. We define the \emph{degree of a word} $w=t_1^{r_1}\ldots t_n^{r_n} g_{i_1}g_{i_2}\ldots g_{i_m}$ in ${\rm Y}_{d,n}(u)$ to be the integer $m$. We write ${\rm deg}(w)=m$.  
For any two words $w,w' \in  {\rm Y}_{d,n}(u)$, we have ${\rm deg}(ww')={\rm deg}(w)+{\rm deg}(w')$.

\subsection{A spanning set for $\YTL$}\label{6.1} Let $n \in \N$.
Let $\underline{i}=(i_1,\ldots,i_p)$ and $\underline{k}=(k_1, \ldots k_p)$ be two $p$-tuplets of non-negative integers, with $0 \leq p \leq n-1$.
We denote by $\mathfrak{H}_n$ the set of pairs $(\underline{i},\underline{k})$ such that
$$1\leq i_1 < i_2< \cdots < i_p \leq n-1 \,\,\,\,\,\text{and}\,\,\,\,\, i_j - k_j \geq 1\,\,\,\,\,\forall\,j=1,\ldots,p.$$
Moreover, we denote by $\SS_n$ the subset of $\mathfrak{H}_n$ consisting of the pairs $(\underline{i},\underline{k})$ such that
$$1\leq i_1 < i_2< \cdots < i_p \leq n-1 \,\,\,\,\,\text{and}\,\,\,\,\, 1\leq i_1-k_1 < i_2-k_2< \cdots < i_p-k_p \leq n-1.$$
For $(\underline{i},\underline{k}) \in \mathfrak{H}_n$, we will denote by $g_{\ul{i},\ul{k}}$ the element
$$(g_{i_1}g_{i_1-1}\ldots g_{i_1-k_1})(g_{i_2}g_{i_2-1}\ldots g_{i_2-k_2})\ldots (g_{i_p}g_{i_p-1}\ldots g_{i_p-k_p}).$$ 
For the sake of simplicity, we will write $g_{i_j,k_j}$ for the ``cycle''  $g_{i_j}g_{i_j-1}\ldots g_{i_j-k_j}$, and so we have $g_{\ul{i},\ul{k}}:=g_{i_1,k_1}g_{i_2,k_2}\ldots g_{i_p,k_p}$.
We take $g_{\emptyset,\emptyset}$ to be equal to $1$.

Let us first take $d=1$. Following Jones \cite{jo}, we consider the standard basis of the Iwahori--Hecke algebra $\mathcal{H}_n(u)$ of type $A$:
\begin{equation}\label{basisHecke}
\{g_{\ul{i},\ul{k}} \,\,|\,\,
(\underline{i},\underline{k}) \in \mathfrak{H}_n\}.
\end{equation}
The above set has $n!$ elements and it is a basis of ``reduced words'', that is, 
$$s_{\ul{i},\ul{k}}:=(s_{i_1}s_{i_1-1}\ldots s_{i_1-k_1})(s_{i_2}s_{i_2-1}\ldots s_{i_2-k_2})\ldots (s_{i_p}s_{i_p-1}\ldots s_{i_p-k_p})$$
is a reduced expression for the corresponding element in $\mathfrak{S}_n$, and every element of $\mathfrak{S}_n$ can be written in the form $s_{\ul{i},\ul{k}}$ for some $(\underline{i},\underline{k}) \in \mathfrak{H}_n$. As a consequence,
if $w$ is a word in $\mathcal{H}_n(u)$, then 
$$w \in  {\rm Span}_{\C[u,u^{-1}]}\{g_{\ul{i},\ul{k}}\ | (\underline{i},\underline{k}) \in \mathfrak{H}_n,\, {\rm deg}(g_{\ul{i},\ul{k}})\leq {\rm deg}(w) \}.$$

Now, Jones \cite{jo2} has shown that the set
\begin{equation}\label{basisTL}
\{g_{\ul{i},\ul{k}} \,\,|\,\,
(\underline{i},\underline{k}) \in \SS_n\}.
\end{equation}
is a basis of the classical Temperley--Lieb algebra ${\rm TL}_n(u)$, for $n \geq 3$. The cardinality of the above set is equal to $C_n$. If $(\underline{i},\underline{k}) \in \mathfrak{H}_n \setminus \SS_n$, then $g_{\ul{i},\ul{k}}$ breaks into a linear combination of elements of smaller degree using the relation: 
\begin{equation}\label{Therelation}
g_{i}g_{i+1}g_i = -g_ig_{i+1}- g_{i+1}g_i - g_{i} - g_{i+1}-1 \quad \text{ for all } i =1, \ldots,n-2.
\end{equation}
We deduce that, for every word $w \in {\rm TL}_n(u)$, we have
$$w \in  {\rm Span}_{\C[u,u^{-1}]}\{g_{\ul{i},\ul{k}}\ | (\underline{i},\underline{k}) \in \SS_n,\, {\rm deg}(g_{\ul{i},\ul{k}})\leq {\rm deg}(w) \}.$$

Let now $d$ be any positive integer. Juyumaya \cite{ju} has shown that the following set is a basis for the Yokonuma--Hecke algebra ${\rm Y}_{d,n}(u)$:
$$\{t_1^{r_1}\ldots t_n^{r_n }g_{\ul{i},\ul{k}} \,\,|\,\,
(\underline{i},\underline{k}) \in \mathfrak{H}_n, \,\,(r_1,\ldots,r_n) \in \{0,\ldots,d-1\}^n\,\}.
$$
The cardinality of the above set is $d^nn!$. Similarly to the classical Hecke algebra case,  if $w$ is a word in ${\rm Y}_{d,n}(u)$, then 
$$w \in  {\rm Span}_{\C[u,u^{-1}]}\{t_1^{r_1}\ldots t_n^{r_n }g_{\ul{i},\ul{k}}\,\, |\,\,
 (\underline{i},\underline{k}) \in \mathfrak{H}_n,\,\,(r_1,\ldots,r_n) \in \{0,\ldots,d-1\}^n,\,\, {\rm deg}(t_1^{r_1}\ldots t_n^{r_n }g_{\ul{i},\ul{k}})\leq {\rm deg}(w) \}.$$ 

Let us consider the Yokonuma--Temperley--Lieb algebra $\YTL$. Since $(\ref{Therelation})$ holds in $\YTL$, every element $g_{\ul{i},\ul{k}}$ with  $(\underline{i},\underline{k}) \in \mathfrak{H}_n \setminus \SS_n$ breaks into a linear combination of elements  of smaller degree, and we have that
$$
S:=\{t_1^{r_1}\ldots t_n^{r_n }g_{\ul{i},\ul{k}} \,\,|\,\,
(\underline{i},\underline{k}) \in \SS_n,\,\,(r_1,\ldots,r_n) \in \{0,\ldots,d-1\}^n\,\}
$$
is a spanning set for $\YTL$, for $n \geq 3$. The cardinality of the above set is equal to $d^n C_n$, which is not equal to the dimension of $\YTL$ unless $d=1$. So $S$ is not a basis of $\YTL$ for $d \neq 1$. However, we still have that if $w$ is any word in $\YTL$, then 
$$w \in  {\rm Span}_{\C[u,u^{-1}]}\{t_1^{r_1}\ldots t_n^{r_n }g_{\ul{i},\ul{k}}\,\, |\,\,
 (\underline{i},\underline{k}) \in \SS_n,\,\,(r_1,\ldots,r_n) \in \{0,\ldots,d-1\}^n,\,\, {\rm deg}(t_1^{r_1}\ldots t_n^{r_n }g_{\ul{i},\ul{k}})\leq {\rm deg}(w) \}.$$

\subsection{A choice of linear dependence relations} 

From now on, let $d \geq 2$.
Let $A_{d,n}$ denote the group algebra of  $(\Z/d\Z)^n$ over $\C$ and let $A_{d,n}(u):=\C[u,u^{-1}] \otimes_{\C} A_{d,n}$. The algebra $A_{d,n}(u)$ is isomorphic to the subalgebra of ${\rm Y}_{d,n}(u)$ generated by $t_1,\ldots,t_n$, but not to the subalgebra of $\YTL$ generated by the $t_j$'s. To avoid confusion, we will denote by $x_1,\ldots,x_n$ the generators of $A_{d,n}$, so that
$x_j^d=1$ and $x_ix_j=x_jx_i$  for all $i,j=1,\ldots,n$. The algebra $A_{d,n}$ has a natural basis $B_{d,n}$ over $\C$ given by  ``monomials'' in $x_1,\ldots,x_n$:
$$B_{d,n}=\{x_1^{r_1}x_2^{r_2}\ldots x_n^{r_n}\,\,|\,\,(r_1,\ldots,r_n) \in \{0,\ldots,d-1\}^n\,\}.$$ 
Thus, every element  of  $A_{d,n}$ can be written in the form $P(x_1,\ldots,x_n)$, where $P$ is a polynomial in $n$ variables with coefficients in $\C$. There is a surjective 
$\C[u,u^{-1}]$-algebra morphism $\varphi$ from $A_{d,n}(u)$ to the subalgebra of $\YTL$ generated by the $t_j$'s given by 
\begin{equation}\label{barmorphism}
\varphi: P(x_1,\ldots,x_n) \mapsto P(t_1,\ldots,t_n)=:\overline{P(x_1,\ldots,x_n)}.
\end{equation}
We have 
$$S = \{ \overline{b}\,g_{\ul{i},\ul{k}} \,\,|\,\,b \in B_{d,n},\,\,
(\underline{i},\underline{k}) \in \mathfrak{T}\}.$$

Let now $w$ be any word in $\YTL$. 
Set 
\begin{center}
$S^{ \leq w}:= \{s \in S\,\,|\,\,{\rm deg}(s) \leq {\rm deg}(w)\}$ \,\,and\,\, $S^{ < w}:= \{s \in S\,\,|\,\,{\rm deg}(s) < {\rm deg}(w)\}$.
\end{center}
We have already seen that 
\begin{equation}\label{weight}
w \in {\rm Span}_{\C[u,u^{-1}]}(S^{\leq w}).
\end{equation} 
We now  define $R(w)$ to be the set
\[
R(w)=\{P(x_1,\ldots,x_n)\in A_{d,n} \ | \ P(t_1, \ldots ,t_n)\,w \in {\rm Span}_{\C[u,u^{-1}]}(S^{<w}) \}.
\] 
It is easy to see that $R(w)$ is an ideal of the commutative algebra $A_{d,n}$. The reason we introduce this ideal is the following: 
Let $(\ul{i},\ul{k}) \in \SS_n$. If we have a proper subset $\mathcal{B}_{d,n}(g_{\ul{i},\ul{k}})$ of $B_{d,n}$
such that
$$\{b_{\ul{i},\ul{k}} + R(g_{\ul{i},\ul{k}})\,|\, b_{\ul{i},\ul{k}}  \in \mathcal{B}_{d,n}(g_{\ul{i},\ul{k}})\}$$ is a basis of the quotient space $A_{d,n}/R(g_{\ul{i},\ul{k}})$, then the set 
\[
\left\{ \overline{b}_{\ul{i},\ul{k}}\,g_{\ul{i},\ul{k}} \,\left|\, (\ul{i},\ul{k}) \in \SS_n, \ b_{\ul{i},\ul{k}}  \in \mathcal{B}_{d,n}(g_{\ul{i},\ul{k}})\right. \right\}
\]
is a spanning set of the algebra $\YTL$, of cardinality smaller than $S$.
As we will see later, this set is in fact a basis of $\YTL$, because its cardinality is equal to the dimension of the algebra.

Let us now give some properties of the ideal $R(w)$. First, note that $R(1)$ is contained in the kernel of the morphism $\varphi$  defined in (\ref{barmorphism}), since
$$R(1)= \{P(x_1,\ldots,x_n)  \in A_{d,n}\ |\  P(t_1,\ldots,t_n) =0 \}.$$

\begin{lem}\label{incl}
Let $w, w'$ be words in $\YTL$. We have $R(w) \subseteq R(ww') $.
\end{lem}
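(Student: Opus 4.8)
The plan is to argue directly from the definitions, exploiting the two structural facts already established: that the degree of a word is additive under concatenation, and that every word of $\YTL$ lies in the $\C[u,u^{-1}]$-span of the elements of $S$ of no larger degree. So let $P=P(x_1,\ldots,x_n)\in R(w)$; by definition this means $P(t_1,\ldots,t_n)\,w \in {\rm Span}_{\C[u,u^{-1}]}(S^{<w})$. I must show $P\in R(ww')$, that is, $P(t_1,\ldots,t_n)\,ww' \in {\rm Span}_{\C[u,u^{-1}]}(S^{<ww'})$.

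First I would expand $P(t_1,\ldots,t_n)\,w=\sum_s c_s\,s$ as a finite $\C[u,u^{-1}]$-linear combination of elements $s\in S^{<w}$, and then multiply on the right by $w'$ to get $P(t_1,\ldots,t_n)\,ww'=\sum_s c_s\,(sw')$. Each such $s$ is a word $t_1^{r_1}\ldots t_n^{r_n}g_{\ul{i},\ul{k}}$ with ${\rm deg}(s)<{\rm deg}(w)$, so $sw'$ is again a word, and by additivity of degree ${\rm deg}(sw')={\rm deg}(s)+{\rm deg}(w')<{\rm deg}(w)+{\rm deg}(w')={\rm deg}(ww')$. Note that $P(t_1,\ldots,t_n)$ has degree $0$, so right multiplication by $w'$ does not disturb this degree bookkeeping.

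Next I would invoke the spanning property (\ref{weight}), which holds for every word of $\YTL$: applied to the word $sw'$ it gives $sw'\in{\rm Span}_{\C[u,u^{-1}]}(S^{\leq sw'})$. Since ${\rm deg}(sw')<{\rm deg}(ww')$ we have $S^{\leq sw'}\subseteq S^{<ww'}$, whence $sw'\in{\rm Span}_{\C[u,u^{-1}]}(S^{<ww'})$. Summing over $s$ then yields $P(t_1,\ldots,t_n)\,ww'\in{\rm Span}_{\C[u,u^{-1}]}(S^{<ww'})$, which is exactly the statement $P\in R(ww')$, completing the inclusion $R(w)\subseteq R(ww')$.

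I do not expect a genuine obstacle here: once the additivity ${\rm deg}(sw')={\rm deg}(s)+{\rm deg}(w')$ and the degree-bounded spanning property are in hand, the argument is essentially a one-line manipulation. The only point requiring a little care is that the products $sw'$ need not themselves belong to $S$ (their braiding part may fail to be of the form $g_{\ul{i},\ul{k}}$ with $(\ul{i},\ul{k})\in\SS_n$, and framing letters may appear in the middle); this is precisely why one must appeal to the spanning property (\ref{weight}) to re-express each $sw'$ in terms of $S^{<ww'}$, rather than claiming membership in $S$ outright.
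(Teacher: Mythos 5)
Your proposal is correct and follows exactly the paper's own argument: take $s \in S^{<w}$, use additivity of degree to get ${\rm deg}(sw') < {\rm deg}(ww')$, and apply the spanning property (\ref{weight}) to place $sw'$ in ${\rm Span}_{\C[u,u^{-1}]}(S^{<ww'})$. The extra care you take in noting that $sw'$ need not lie in $S$ itself is a sound clarification of why (\ref{weight}) is invoked, but it is the same proof.
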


\begin{proof} If $P(x_1,\ldots, x_n) \in R(w)$, then $P(t_1,\ldots, t_n)\,w \in {\rm Span}_{\C[u,u^{-1}]}(S^{< w})$.
Let $s \in  S^{<w}$. We have
$${\rm deg}(s w')  = {\rm deg}(s ) +{\rm deg}(w' ) < {\rm deg}( w) +{\rm deg}(w' )  ={\rm deg}( ww').$$ 
By (\ref{weight}), we have 
$$sw' \in  {\rm Span}_{\C[u,u^{-1}]}(S^{\leq sw'}) \subseteq  \,{\rm Span}_{\C[u,u^{-1}]}(S^{< ww'}). $$ 
So $P(t_1,\ldots, t_n)\,ww' \in {\rm Span}_{\C[u,u^{-1}]}(S^{<ww'})$, whence $P(x_1,\ldots, x_n) \in R(ww')$.
\end{proof}

\begin{cor}\label{R(1)}
Let $w$ be any word in $\YTL$. We have $R(1) \subseteq R(w)$.
\end{cor}

Now, the symmetric group $\mathfrak{S}_n$ acts on the algebras $A_{d,n}$ and $A_{d,n}(u)$ by permutation of the $x_j$'s: if $\sigma \in \mathfrak{S}_n$ and $P (x_1,\ldots ,x_n) \in A_{d,n}(u)$, then
\[
^\sigma P(x_1,\ldots, x_n):=P(x_{\sigma(1)}, \ldots , x_{\sigma(n)}).
\]
This action is still well-defined on the subalgebra of $\YTL$ generated by the $t_j$'s, which is, as we have already seen, isomorphic to $A_{d,n}(u)/{\rm Ker} \varphi$. For this, it is enough to show that ${\rm Ker} \varphi$ is invariant under the action of the symmetric group, that is, if we have $P(x_1,\ldots,x_n) \in A_{d,n}(u)$ such that $P(t_1,\ldots,t_n)=0$, then
${}^\sigma P(t_1,\ldots, t_n)=0$ for all $\sigma \in \mathfrak{S}_n$.
Indeed, for any transposition $s_i = (i,i+1) \in \mathfrak{S}_n$, if $P(t_1,\ldots,t_n)=0$, then, due to  (\ref{modular})(${\rm f}_2$),
$${}^{s_i}P(t_1,\ldots, t_n)= g_ig_i^{-1}({}^{s_i}P(t_1,\ldots, t_n))=
g_iP(t_1,\ldots,t_n)g_i^{-1}=0.$$

Let now $w=  t_1^{r_1}\ldots t_n^{r_n}g_{i_1}\ldots g_{i_m}$ be a word in $\YTL$.  We define $\sigma_w$ to be  the permutation in  $\mathfrak{S}_n$ given by
\[
\sigma_w=s_{i_1}\ldots s_{i_m}. 
\]
Then, for any element $P (x_1,\ldots ,x_n) \in A_{d,n}(u)$, we have
\begin{equation}\label{sigmaw}
w\,P(t_1,\ldots ,t_n)= {}^{\sigma_{w}} P(t_1,\ldots,t_n)\,w.
\end{equation}

\begin{lem}\label{sum} Let $w,w'$ be words in $\YTL$.
We have ${}^{\sigma_w}R(w') \subseteq R(ww')$.
\end{lem}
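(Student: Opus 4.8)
The plan is to show that if $P(x_1,\ldots,x_n) \in R(w')$, then ${}^{\sigma_w}P(x_1,\ldots,x_n) \in R(ww')$, that is, multiplying ${}^{\sigma_w}P(t_1,\ldots,t_n)$ on the left of $ww'$ lands in $\mathrm{Span}_{\C[u,u^{-1}]}(S^{<ww'})$. The key observation is the commutation identity (\ref{sigmaw}), which lets us move the framing polynomial past the braiding part of $w$: indeed, since $\sigma_w$ is exactly the permutation realised by the braiding generators $g_{i_1}\ldots g_{i_m}$ of $w$, relation (\ref{modular})(${\rm f}_2$) gives ${}^{\sigma_w}P(t_1,\ldots,t_n)\,w = w\,P(t_1,\ldots,t_n)$. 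This is the crux: it converts a left multiplication by the twisted polynomial into a right multiplication by the original polynomial, after which the hypothesis $P \in R(w')$ becomes directly usable.

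First I would take $P(x_1,\ldots,x_n) \in R(w')$, so by definition $P(t_1,\ldots,t_n)\,w' \in \mathrm{Span}_{\C[u,u^{-1}]}(S^{<w'})$. Then I would compute, using (\ref{sigmaw}),
\[
{}^{\sigma_w}P(t_1,\ldots,t_n)\,ww' = w\,P(t_1,\ldots,t_n)\,w'.
\]
Now $P(t_1,\ldots,t_n)\,w'$ is a $\C[u,u^{-1}]$-linear combination of elements $s \in S^{<w'}$, so it suffices to show that for each such $s$ we have $ws \in \mathrm{Span}_{\C[u,u^{-1}]}(S^{<ww'})$. This is essentially the same degree-counting argument as in Lemma \ref{incl}: for $s \in S^{<w'}$ we have ${\rm deg}(ws) = {\rm deg}(w)+{\rm deg}(s) < {\rm deg}(w)+{\rm deg}(w') = {\rm deg}(ww')$, and then (\ref{weight}) applied to the word $ws$ gives $ws \in \mathrm{Span}_{\C[u,u^{-1}]}(S^{\leq ws}) \subseteq \mathrm{Span}_{\C[u,u^{-1}]}(S^{<ww'})$.

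Combining these, ${}^{\sigma_w}P(t_1,\ldots,t_n)\,ww' \in \mathrm{Span}_{\C[u,u^{-1}]}(S^{<ww'})$, which is precisely the statement that ${}^{\sigma_w}P(x_1,\ldots,x_n) \in R(ww')$, giving the desired inclusion ${}^{\sigma_w}R(w') \subseteq R(ww')$. I expect the main (and only genuine) obstacle to be the correct bookkeeping in applying (\ref{sigmaw}): one must be sure that the permutation $\sigma_w$ attached to $w$ really intertwines left and right multiplication as claimed, and in particular that left-multiplying $w$ by a framing polynomial does not itself increase the degree of $w$ (which holds because $\overline{P(t_1,\ldots,t_n)}$ is a $\C$-combination of pure framing monomials of degree $0$). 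The degree estimate then goes through exactly as in Lemma \ref{incl}, so this is a short argument resting almost entirely on the commutation relation (\ref{sigmaw}) together with the prior degree lemma.
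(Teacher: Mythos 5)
Your proof is correct and follows essentially the same route as the paper's: the same degree-counting argument (borrowed from Lemma \ref{incl}) showing $ws \in {\rm Span}_{\C[u,u^{-1}]}(S^{<ww'})$ for $s \in S^{<w'}$, combined with the commutation relation (\ref{sigmaw}) to convert $w\,P(t_1,\ldots,t_n)\,w'$ into ${}^{\sigma_w}P(t_1,\ldots,t_n)\,ww'$. The only difference is cosmetic ordering — you invoke (\ref{sigmaw}) first and then do the degree estimate, while the paper does the reverse — so the two arguments are mathematically identical.
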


\begin{proof}
 If $P(x_1,\ldots,x_n) \in R(w')$, then $P(t_1,\ldots,t_n)w' \in {\rm Span}_{\C[u,u^{-1}]}(S^{<w'})$.  Let $s \in S^{<w'}$. We have
 $${\rm deg}(ws )  = {\rm deg}(w) +{\rm deg}(s) < {\rm deg}( w) +{\rm deg}(w' )  ={\rm deg}( ww').$$ 
By (\ref{weight}), we have 
$$ws \in  {\rm Span}_{\C[u,u^{-1}]}(S^{\leq ws}) \subseteq  \,{\rm Span}_{\C[u,u^{-1}]}(S^{< ww'}). $$ 
So $wP(t_1,\ldots, t_n)w' \in {\rm Span}_{\C[u,u^{-1}]}(S^{<ww'})$.
Due to (\ref{sigmaw}), we have ${}^{\sigma_{w}} P(t_1,\ldots,t_n)ww' \in  {\rm Span}_{\C[u,u^{-1}]}(S^{<ww'})$, whence
  {${}^{\sigma_w} P(x_1,\ldots,x_n) \in R(ww')$}.
\end{proof}

We will now start by giving a  list of basic linear dependence relations in $\YTL$, which will allow us to determine some of the elements in the ideals $R(g_{\ul{i},\ul{k}})$ for $({\ul{i},\ul{k}})\in \SS_n$.

\begin{prop}
We have the following relations in the algebra $\YTL$:
\begin{enumerate}
\item $g_ig_{i+1}g_i + g_ig_{i+1}+g_{i+1}g_i+g_i+g_{i+1}+1=0$\, for any  $1\leq i <n-1$, \smallbreak
\item $(t_i-t_{i+2})(1+g_i)+(t_i-t_{i+1})(g_{i+1}+g_{i+1}g_i)=0$\, for any $1\leq i <n-1$, \smallbreak
\item $(t_{i+2}-t_i)(1+g_{i+1})+(t_{i+2}-t_{i+1})(g_i+g_ig_{i+1})=0$\, for any $1\leq i <n-1$, \smallbreak
\item $(t_{i+2}-t_{i+1})(t_{i+2}-t_{i})(g_i+1)=0$\, for any $1\leq i <n-1$, \smallbreak
\item $(t_{i-1}-t_{i+1})(t_{i-1}-t_{i})(g_i+1)=0$\, for any $1< i \leq n-1$, \smallbreak
\item $(t_{i}-t_{i+1})(t_{i+1}-t_{i+2})(t_i-t_{i+2})=0$\, for any $1 \leq i <n-1$.
\end{enumerate}
\end{prop}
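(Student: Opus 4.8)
The plan is to derive every relation from the single defining relation of $\YTL$, namely the vanishing of
$G := g_ig_{i+1}g_i+g_ig_{i+1}+g_{i+1}g_i+g_i+g_{i+1}+1$,
by propagating framings through braiding generators with the commutation rule (\ref{modular})($\mathrm{f}_2$) and invoking the quadratic relation (\ref{quadr}) (together with the idempotents (\ref{ei})) only where unavoidable. Relation (1) is exactly this defining relation. For (2) and (3), since $G=0$ we also have $t_iG=0$ and $Gt_{i+2}=0$. I would compute $t_iG$ by pushing $t_i$ to the right of each braiding word via ($\mathrm{f}_2$): the words $g_ig_{i+1}g_i$ and $g_ig_{i+1}$ have underlying permutation sending $i\mapsto i+2$, so they produce precisely the terms $g_ig_{i+1}g_i\,t_{i+2}$ and $g_ig_{i+1}\,t_{i+2}$ that also occur in $Gt_{i+2}$. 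Forming $t_iG-Gt_{i+2}=0$ therefore cancels these two top-degree terms, and moving the surviving framings back to the left (again by ($\mathrm{f}_2$)) yields relation (2) verbatim. Relation (3) comes out symmetrically from $t_{i+2}G-Gt_i=0$.

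For (4) and (5) I would start from (2) and (3) in the factored forms $[(t_i-t_{i+2})+(t_i-t_{i+1})g_{i+1}](1+g_i)=0$ and $[(t_{i+2}-t_i)+(t_{i+2}-t_{i+1})g_i](1+g_{i+1})=0$. Left-multiplying the first by $(t_{i+2}-t_{i+1})$ and carrying $g_{i+1}$ through the framings shows that $Q:=(t_{i+2}-t_{i+1})(t_{i+2}-t_i)(1+g_i)$, the left-hand side of (4), satisfies $(1-g_{i+1})Q=0$, i.e. $g_{i+1}Q=Q$. This identity is where the difficulty concentrates: it does not by itself force $Q=0$, and feeding it into (\ref{quadr}) is vacuous, since $e_{i+1}$ annihilates $Q$ because $e_{i+1}(t_{i+2}-t_{i+1})=0$. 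To conclude $Q=0$ I expect to have to eliminate the higher-degree braiding terms by combining $g_{i+1}Q=Q$ with relation (3) and the defining relation, carefully tracking the correction terms produced by $g_i^2$ through (\ref{quadr}); this elimination is the main obstacle of the proposition. Relation (5) will then follow from the identical computation applied to relation (3) at index $i-1$ (equivalently, by the order-reversing anti-automorphism $g_j\mapsto g_{n-j}$, $t_j\mapsto t_{n+1-j}$).

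Relation (6) I would deduce cleanly once (4) and (5) are in hand. Right-multiplying (4) by $(t_i-t_{i+1})$ and using $(1+g_i)(t_i-t_{i+1})=(t_i-t_{i+1})(1-g_i)$ gives $P(1-g_i)=0$, where $P:=(t_i-t_{i+1})(t_{i+1}-t_{i+2})(t_i-t_{i+2})$ is the left-hand side of (6); likewise right-multiplying the instance of (5) at index $i+1$ by $(t_{i+1}-t_{i+2})$ gives $P(1-g_{i+1})=0$. Hence $Pg_i=P$ and $Pg_{i+1}=P$, so $P$ is fixed on the right by every braiding word in $g_i,g_{i+1}$; each of the six terms of $G$ therefore contributes $P$, so multiplying $G=0$ on the left by $P$ gives $0=PG=6P$, whence $P=0$ (as $6$ is invertible in $\C[u,u^{-1}]$). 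Note the index ranges match: this uses (4) at $i$ and (5) at $i+1$, both requiring $i\leq n-2$.

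In summary, (1) is definitional, (2)–(3) reduce to a one-step framing propagation with a clean cancellation of the top-degree braiding terms, and (6) is an immediate corollary of (4)–(5) via the ``$PG=6P$'' trick. The genuine work, and the step I expect to be hardest, is promoting the eigenvalue-type identity $g_{i+1}Q=Q$ to the actual vanishing $Q=0$ in (4) and (5); this is precisely the point at which the Temperley–Lieb quotient (and not merely the Yokonuma–Hecke relations) is essential, reflecting the fact that in every surviving representation the framing factor $(t_{i+2}-t_{i+1})(t_{i+2}-t_i)$ is nonzero only on states where strands $i,i+1$ lie in a common column, forcing $(1+g_i)$ to act by $0$.
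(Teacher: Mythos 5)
Your treatment of (1)--(3) is correct and coincides with the paper's: (1) is the defining relation $G:=g_ig_{i+1}g_i+g_ig_{i+1}+g_{i+1}g_i+g_i+g_{i+1}+1=0$, and (2), (3) are the two-sided differences $t_iG-Gt_{i+2}$ and $t_{i+2}G-Gt_i$, with the top-degree braiding terms cancelling exactly as you say. The gap is in (4) and (5), which you explicitly leave open, and it is fatal to the write-up as it stands, since your (6) rests on them. The difficulty is self-inflicted: at this step you abandon the very trick that just worked. Left-multiplying (2) by $(t_{i+2}-t_{i+1})$ alone can only give the eigenvalue identity $g_{i+1}Q=Q$, which is indeed inconclusive; the paper instead iterates the two-sided commutator move. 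Writing $X:=(t_i-t_{i+2})(1+g_i)+(t_i-t_{i+1})(g_{i+1}+g_{i+1}g_i)$ for the left-hand side of (2), one computes $t_{i+1}X-Xt_{i+2}$. By (\ref{modular})(${\rm f}_2$) we have $(1+g_i)t_{i+2}=t_{i+2}(1+g_i)$, whereas $(g_{i+1}+g_{i+1}g_i)t_{i+2}=t_{i+1}(g_{i+1}+g_{i+1}g_i)$; hence the second term of $X$ is hit by the factor $(t_{i+1}-t_{i+1})=0$ and vanishes identically, giving
\[
0 \;=\; t_{i+1}X-Xt_{i+2} \;=\; (t_{i+1}-t_{i+2})(t_i-t_{i+2})(g_i+1),
\]
which is exactly (4). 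No quadratic relation, no idempotents, and no representation-theoretic input are needed. Likewise (5) is $t_{i+1}Y-Yt_i$ with $Y$ the left-hand side of (3), followed by the substitution $i\mapsto i-1$; and the paper obtains (6) as $t_iZ-Zt_{i+1}$ with $Z$ the left-hand side of (4).

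Two further remarks. First, your conditional derivation of (6) from (4) and (5) --- establishing $Pg_i=P$ and $Pg_{i+1}=P$ and concluding $0=PG=6P$, so $P=0$ --- is correct (including the index bookkeeping), and it is a genuinely different, rather elegant route compared with the paper's commutator computation; had (4) and (5) been proved, that part would stand. Second, your closing diagnosis is off the mark: promoting $g_{i+1}Q=Q$ to $Q=0$ is not ``the genuine work'' of the proposition, and nothing here requires tracking correction terms from $g_i^2$ through (\ref{quadr}), nor any appeal to which representations survive the quotient. The entire proposition follows from relation (1) and the framing relations by repeated one-line two-sided commutator computations; the Temperley--Lieb quotient enters only through (1) itself.
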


\begin{proof}
The first equation
\begin{equation}\label{rel1}
g_ig_{i+1}g_i + g_ig_{i+1}+g_{i+1}g_i+g_i+g_{i+1}+1=0
\end{equation}
is a defining relation for $\YTL$.
If we compute $t_i(\ref{rel1})-(\ref{rel1})t_{i+2}$, we obtain
\begin{equation}\label{rel2}
(t_i-t_{i+2})(1+g_i)+(t_{i}-t_{i+1})(g_{i+1}+g_{i+1}g_i)=0,
\end{equation}
which is the second equation. The equation
\begin{equation}\label{rel3}
(t_{i+2}-t_i)(1+g_{i+1})+(t_{i+2}-t_{i+1})(g_i+g_ig_{i+1})=0
\end{equation}
is similarly obtained by taking $t_{i+2}(\ref{rel1})-(\ref{rel1})t_i$.

Now, if we compute $t_{i+1}(\ref{rel2})-(\ref{rel2})t_{i+2}$, we obtain
\begin{equation}\label{rel4}
(t_{i+1}-t_{i+2})(t_i-t_{i+2})(g_i+1)=0,
\end{equation}
which is the fourth equation.
On the other hand, if we take $t_{i+1}(\ref{rel3})-(\ref{rel3})t_{i}$, then we obtain
$$
(t_{i+1}-t_i)(t_{i+2}-t_{i})(g_{i+1}+1)=0.
$$
If we replace $i$ by $i-1$, we obtain
\begin{equation}\label{rel5}
(t_{i}-t_{i-1})(t_{i+1}-t_{i-1})(g_{i}+1)=0.
\end{equation}
which is the fifth equation.

Finally, if we take $t_{i}(\ref{rel4})-(\ref{rel4})t_{i+1}$, we obtain
\begin{equation}\label{rel6}
(t_{i}-t_{i+1})(t_{i+1}-t_{i+2})(t_i-t_{i+2})=0.
\end{equation}
\end{proof}

\begin{cor}
We have the following relations in the algebra $\YTL$:
\begin{enumerate}
\item $(t_{i}-t_j)(t_{i}-t_{k})(t_{j}-t_k)=0$\, for any $1 \leq i,j,k \leq n$,  \smallbreak
\item $(t_{j}-t_{i})(t_{j}-t_{i+1})(g_i+1)=0$\, for any $1\leq  i \leq n-1$ and any $1 \leq j \leq n$.
\end{enumerate}
\end{cor}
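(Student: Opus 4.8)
The plan is to derive both relations as consequences of the six relations established in the preceding proposition, treating the corollary purely as a combinatorial bookkeeping exercise on the indices. The key observation for part (1) is that relation (\ref{rel6}) already gives the statement for three \emph{consecutive} indices, namely $(t_i-t_{i+1})(t_{i+1}-t_{i+2})(t_i-t_{i+2})=0$, and the desired identity $(t_i-t_j)(t_i-t_k)(t_j-t_k)=0$ is symmetric in $i,j,k$ (up to sign, any reordering of the three factors leaves the product fixed because swapping two indices negates exactly two of the three differences). So the task reduces to propagating the relation from consecutive triples to arbitrary triples.

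For part (1), I would proceed by showing the relation holds for any triple and exploiting the $\mathfrak{S}_n$-action introduced before Lemma \ref{sum}. Concretely, one knows that $\mathrm{Ker}\,\varphi$ is $\mathfrak{S}_n$-invariant, so the relations among the $t_j$'s can be permuted: if $(t_i-t_{i+1})(t_{i+1}-t_{i+2})(t_i-t_{i+2})=0$ holds in $\YTL$, then applying any $\sigma \in \mathfrak{S}_n$ to the indices yields $(t_{\sigma(i)}-t_{\sigma(i+1)})(t_{\sigma(i+1)}-t_{\sigma(i+2)})(t_{\sigma(i)}-t_{\sigma(i+2)})=0$, since this element lies in $\mathrm{Ker}\,\varphi$ and the kernel is stable under permutation of the generators. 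As the symmetric group acts transitively on unordered triples $\{i,j,k\}$ of distinct indices in $\{1,\ldots,n\}$, every such triple is reached; the cases where two or three of $i,j,k$ coincide make the product trivially zero. This gives (1) for all distinct $i,j,k$, and hence for all $i,j,k$.

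For part (2), the natural starting points are relations (\ref{rel4}) and (\ref{rel5}), which together assert $(t_{i+1}-t_{i+2})(t_i-t_{i+2})(g_i+1)=0$ and $(t_i-t_{i-1})(t_{i+1}-t_{i-1})(g_i+1)=0$. I would observe that the claimed relation $(t_j-t_i)(t_j-t_{i+1})(g_i+1)=0$ distinguishes the two indices $i,i+1$ attached to $g_i$ from the free index $j$. When $j=i$ or $j=i+1$ one factor vanishes and the relation is trivial, so assume $j\notin\{i,i+1\}$. Here I would use the $\mathfrak{S}_n$-invariance argument more carefully: the element $g_i+1$ is \emph{not} itself moved by a general permutation, so I cannot naively permute the whole expression. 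Instead, the cleanest route is to run the same $t$-commutator computation that produced (\ref{rel4}) and (\ref{rel5}) but starting from (\ref{rel1}) with the free index $j$ playing the role that $t_{i+2}$ (resp.\ $t_{i-1}$) played before. That is, the derivation of (\ref{rel4}) used only that $t_{i+2}$ commutes appropriately with $g_i$ and anticommutes in the right way through the braid relation; replacing $t_{i+2}$ by an arbitrary $t_j$ with $j\notin\{i,i+1\}$ (so that $t_j$ commutes with $g_i$ via (\ref{modular})(${\rm f}_2$), since $s_i$ fixes $j$) should yield $(t_i-t_j)(t_{i+1}-t_j)(g_i+1)=0$ directly.

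The main obstacle I anticipate is part (2) in the case $j\notin\{i,i+1\}$ when $j$ is not adjacent to the block $\{i,i+1\}$, where one must confirm that the original commutator derivation genuinely only relied on $t_j$ commuting with $g_i$ and not on any closer relationship between the indices. I would verify this by rewriting: since $s_i$ fixes every $j\notin\{i,i+1\}$, relation (\ref{modular})(${\rm f}_2$) gives $t_j g_i = g_i t_j$, so $t_j$ passes freely through $g_i$; applying the operation $P \mapsto t_j P - P\, t_{i+1}$ (or the appropriate analogue) to relation (\ref{rel1}) and then to the result should collapse, by the same telescoping that gave (\ref{rel4}), to the factored form $(t_j-t_i)(t_j-t_{i+1})(g_i+1)=0$. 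Once the commutation of $t_j$ with $g_i$ is in hand, the algebra is identical to the consecutive case already carried out in the proposition, so no genuinely new computation is required — only the verification that the index hypotheses permit each commutation step.
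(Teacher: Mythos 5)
Your part (1) is correct and is exactly the paper's argument: relations involving only the $t_j$'s (elements of $\mathrm{Ker}\,\varphi$) are stable under the $\mathfrak{S}_n$-action, so permuting indices in (\ref{rel6}) gives the identity for every triple. The genuine gap is in part (2), precisely at the point you flagged as the ``main obstacle'': your claim that the derivation of (\ref{rel4}) relied only on $t_{i+2}$ commuting with $g_i$ is false. Writing $G:=g_ig_{i+1}g_i+g_ig_{i+1}+g_{i+1}g_i+g_i+g_{i+1}+1$, the passage from (\ref{rel1}) to (\ref{rel2}) works because the two top-degree terms cancel in $t_iG-Gt_{i+2}$, and this cancellation requires $t_i\,g_ig_{i+1}g_i=g_ig_{i+1}g_i\,t_{i+2}$ and $t_i\,g_ig_{i+1}=g_ig_{i+1}\,t_{i+2}$, i.e.\ it uses that the permutation underlying $g_ig_{i+1}g_i$ is the transposition $(i,\,i+2)$; likewise the passage from (\ref{rel2}) to (\ref{rel4}) needs $t_{i+1}g_{i+1}=g_{i+1}t_{i+2}$. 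All of these tie the third index to $i+2$ through its interaction with $g_{i+1}$, not with $g_i$. If $j\notin\{i-1,i,i+1,i+2\}$, then $s_i$ and $s_{i+1}$ both fix $j$, so $t_j$ commutes with \emph{every} term of $G$; consequently $t_iG-Gt_j=(t_i-t_j)G$ is just a multiple of the defining relation. Indeed, if you expand it and eliminate the cubic term via (\ref{rel1}), the terms involving $t_j$ assemble into $-Gt_j$ and cancel, and what survives is a rewriting of (\ref{rel2}) in which $t_j$ does not appear at all. So no commutator of this type can output a relation containing a distant $t_j$: there is no ``telescoping'' to re-run, and the approach fails for every $j$ outside $\{i-1,i,i+1,i+2\}$.

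The missing idea is the paper's conjugation trick, which is the correct way to exploit the $\mathfrak{S}_n$-action in the presence of the factor $g_i+1$ (which, as you rightly note, cannot be permuted naively): for $j\geq i+2$ put $w:=g_{i+2}g_{i+3}\cdots g_{j-1}$. Then $w$ commutes with $g_i$ (all its indices differ from $i$ by at least $2$), and by (\ref{sigmaw}) its underlying permutation sends $j\mapsto i+2$ while fixing $i$ and $i+1$, so
$$w\,(t_{i+1}-t_j)(t_i-t_j)(g_i+1)=(t_{i+1}-t_{i+2})(t_i-t_{i+2})(g_i+1)\,w=0$$
by (\ref{rel4}); multiplying on the left by $w^{-1}$ gives the claim. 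For $j\leq i-1$ one conjugates by $w:=g_{i-2}g_{i-3}\cdots g_j$ and uses (\ref{rel5}) instead. Conjugation by a word commuting with $g_i$ moves the framing indices by $\sigma_w$ while leaving $g_i+1$ untouched, which is exactly what your proposed direct recomputation cannot achieve.
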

\begin{proof}
As we have already mentioned, if $P(t_1,\ldots ,t_n)=0$ for some $P(x_1,\ldots ,x_n) \in A_{d,n}$, then
${}^{\sigma}P(t_1,\ldots t_n)=0$ for all $\sigma \in \mathfrak{S}_n$. Hence, (1) is obtained directly from   (\ref{rel6}).

Now, for (2), if $j \geq i+2$, set $w:=g_{i+2} g_{i+3}\ldots g_{j-1}$. We have $wg_i=g_iw$, and thus, 
$$ (t_{i+1}-t_{j})(t_i-t_{j})(g_i+1)=w^{-1}w\, (t_{i+1}-t_{j})(t_i-t_{j})(g_i+1)=w^{-1} (\ref{rel4})\, w =0.$$
If $j \leq i-1$, set $w:=g_{i-2}g_{i-3}\ldots g_j$. We have $wg_i=g_iw$, and thus, 
$$ (t_{i}-t_{j})(t_{i+1}-t_{j})(g_i+1)=w^{-1}w \,(t_{i}-t_{j})(t_{i+1}-t_{j})(g_i+1)=w^{-1} (\ref{rel5})\, w =0.$$
For $j=i,i+1$, we have $(t_{j}-t_{i})(t_{j}-t_{i+1})=0$.
\end{proof}

We can relate all these properties to the ideals $R(w)$.

\begin{cor}\label{basicrel}
We have the following:
\begin{enumerate}
\item $R(g_ig_{i+1}g_i)=A_{d,n}$ for any $1\leq i<n-1$, \smallbreak
\item $(x_i-x_{i+1}) \in R(g_{i+1}g_i)$ for any $1\leq i <n-1$, \smallbreak
\item $(x_{i+2}-x_{i+1}) \in R(g_ig_{i+1})$ for any $1\leq i <n-1$, \smallbreak
\item $(x_j-x_{i})(x_j-x_{i+1})\in R(g_i)$ for any $1\leq i \leq n-1$ and any $1\leq j \leq n$, \smallbreak
\item $(x_i-x_j)(x_i-x_k)(x_j-x_k) \in R(1)$ for any $1\leq i,j,k \leq n$.
\end{enumerate}
\end{cor}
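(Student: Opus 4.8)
The plan is to read off each of the five inclusions directly from the relations established in the preceding Proposition and its Corollary, translating them through the definition of $R(w)$. Recall that $P(x_1,\ldots,x_n)\in R(w)$ means exactly that $P(t_1,\ldots,t_n)\,w \in {\rm Span}_{\C[u,u^{-1}]}(S^{<w})$, and that $R(1)=\{P\in A_{d,n}\mid P(t_1,\ldots,t_n)=0\}$. So in each case I would start from the relevant identity, isolate on one side the product of the form $P(t_1,\ldots,t_n)\,w$ that I want to control, and then check that every remaining term is a word (or a $\C$-linear combination of words) of degree \emph{strictly} less than ${\rm deg}(w)$. Once this degree drop is confirmed, (\ref{weight}) applied term by term places each such word in ${\rm Span}_{\C[u,u^{-1}]}(S^{<w})$; in particular I never have to produce the $\SS_n$-expansion of the lower-degree words by hand.

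Concretely, I would dispatch the parts as follows. For (5), since $R(1)$ is the set of polynomials killed by $x_j\mapsto t_j$, the inclusion $(x_i-x_j)(x_i-x_k)(x_j-x_k)\in R(1)$ is equivalent to the vanishing $(t_i-t_j)(t_i-t_k)(t_j-t_k)=0$, which is part (1) of the preceding Corollary. For (4), part (2) of that Corollary gives $(t_j-t_i)(t_j-t_{i+1})\,g_i=-(t_j-t_i)(t_j-t_{i+1})$, whose right-hand side has degree $0<1={\rm deg}(g_i)$, so $(x_j-x_i)(x_j-x_{i+1})\in R(g_i)$. For (2) and (3) I would expand (\ref{rel2}) and (\ref{rel3}), solve respectively for $(t_i-t_{i+1})\,g_{i+1}g_i$ and for $(t_{i+2}-t_{i+1})\,g_ig_{i+1}$, and observe that the leftover terms have degree at most $1<2$. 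Finally, for (1), relation (\ref{rel1}) writes $g_ig_{i+1}g_i$ as a combination of $g_ig_{i+1},\,g_{i+1}g_i,\,g_i,\,g_{i+1},\,1$, all of degree at most $2<3$; hence $1\in R(g_ig_{i+1}g_i)$, and since $R(g_ig_{i+1}g_i)$ is an ideal of $A_{d,n}$ this forces $R(g_ig_{i+1}g_i)=A_{d,n}$.

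The one place that needs genuine care, and what I would treat as the main obstacle, is the degree bookkeeping together with the left-multiplication convention built into $R(w)$: the defining product is $P(t_1,\ldots,t_n)\,w$ with the framing polynomial on the \emph{left}, so I must arrange each relation so that the factor I designate as $w$ sits on the right of $P(t_1,\ldots,t_n)$, and then verify that the remaining terms are of \emph{strictly} smaller degree rather than merely different. Because ${\rm deg}$ is additive and counts only the braiding generators, any term in which the cube $g_ig_{i+1}g_i$, a length-two word $g_{i+1}g_i$ or $g_ig_{i+1}$, or a single $g_i$ is replaced by something with fewer $g$-factors automatically drops in degree; this strict inequality is precisely what lets me invoke (\ref{weight}) with the strict set $S^{<w}$ instead of $S^{\le w}$. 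Once these inequalities are recorded, each inclusion is immediate, and no computation beyond the identities already proved is required.
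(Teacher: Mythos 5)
Your proof is correct and matches the paper's intent exactly: the paper states this corollary without a written proof, as an immediate translation of relations (\ref{rel1})--(\ref{rel6}) and the preceding corollary into the language of the ideals $R(w)$, which is precisely what you carry out. Your explicit attention to the left-multiplication convention in the definition of $R(w)$ and to the strict degree drop justifying membership in ${\rm Span}_{\C[u,u^{-1}]}(S^{<w})$ via (\ref{weight}) fills in the only details the paper leaves implicit.
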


\begin{rem}{\rm
For $d=2$, we have $(x_i-x_j)(x_i-x_k)(x_j-x_k)=0$ for any $1\leq i,j,k \leq n$.}
\end{rem}

\begin{prop}\label{propg_1g_3}
 For all $1 \leq i , j \leq n-1$ such that $|i-j|>1$, we have 
\begin{equation}\label{g_1g_3}
(x_i+x_{i+1}-x_j-x_{j+1} )\in R(g_ig_j).
\end{equation}
\end{prop}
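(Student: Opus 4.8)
The plan is to exploit the fact that $A_{d,n} = \C[(\Z/d\Z)^n]$ is a finite-dimensional semisimple commutative algebra, so that membership in the ideal $R(g_ig_j)$ can be tested character by character. Concretely, $A_{d,n} \cong \prod_\chi \C$, the product running over the characters $\chi$ of $(\Z/d\Z)^n$; such a $\chi$ is given by an $n$-tuple $(\zeta_1,\dots,\zeta_n)$ of $d$-th roots of unity, with $x_m \mapsto \zeta_m$. Since every ideal of a product of fields is radical, $R(g_ig_j)$ is the vanishing ideal of the set $Z$ of characters on which all of its elements vanish; hence it suffices to prove that the linear element $x_i+x_{i+1}-x_j-x_{j+1}$ evaluates to $0$ at every $\chi \in Z$.

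First I would collect the quadratic elements already known to lie in $R(g_ig_j)$. Because $|i-j|>1$, the generators $g_i$ and $g_j$ commute, so $g_ig_j = g_jg_i$ as elements of $\YTL$ and $R(g_ig_j)=R(g_jg_i)$. Lemma \ref{incl} then gives $R(g_i)\subseteq R(g_ig_j)$ and $R(g_j)\subseteq R(g_jg_i)=R(g_ig_j)$. Feeding Corollary \ref{basicrel}(4) into these inclusions yields, for every $m \in \{1,\dots,n\}$,
$$(x_m-x_i)(x_m-x_{i+1}) \in R(g_ig_j) \qquad\text{and}\qquad (x_m-x_j)(x_m-x_{j+1}) \in R(g_ig_j).$$

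Next I would read off the consequences at a character $\chi=(\zeta_1,\dots,\zeta_n) \in Z$. Taking $m=j$ and $m=j+1$ in the first family forces $\zeta_j,\zeta_{j+1} \in \{\zeta_i,\zeta_{i+1}\}$, and taking $m=i$ and $m=i+1$ in the second forces $\zeta_i,\zeta_{i+1} \in \{\zeta_j,\zeta_{j+1}\}$. A short case analysis finishes the argument: if $\zeta_i=\zeta_{i+1}$ then both constraints collapse to $\zeta_j=\zeta_{j+1}=\zeta_i$, whence the target vanishes; if $\zeta_i \neq \zeta_{i+1}$ then the two two-element sets $\{\zeta_i,\zeta_{i+1}\}$ and $\{\zeta_j,\zeta_{j+1}\}$ must coincide, so again $\zeta_i+\zeta_{i+1}=\zeta_j+\zeta_{j+1}$. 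In either case $\zeta_i+\zeta_{i+1}-\zeta_j-\zeta_{j+1}=0$, and since this holds for all $\chi \in Z$ we conclude $(x_i+x_{i+1}-x_j-x_{j+1}) \in R(g_ig_j)$.

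The step I expect to be the crux is the reduction to characters, i.e. recognising that one should not try to produce the (linear) target element as an algebraic combination of the (quadratic) generators of $R(g_ig_j)$ — it does not lie in the polynomial ideal that they generate — but should instead use semisimplicity of $A_{d,n}$ to replace ideal membership by vanishing on the common zero set $Z$. Once this is in place, everything else is the elementary root-of-unity bookkeeping above. One could alternatively derive $(x_i-x_{i+1})(x_i+x_{i+1}-x_j-x_{j+1})\in R(g_ig_j)$, together with its $i\leftrightarrow j$ analogue, directly from Lemma \ref{sum} applied to $R(g_j)$ and $R(g_i)$ (comparing $Q$ and ${}^{s_i}Q$), but these relations alone do not pin down the linear element, so the character argument still seems to be the essential ingredient.
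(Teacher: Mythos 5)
Your proof is correct, and while the preparation is identical to the paper's, the concluding mechanism is genuinely different. Both arguments begin by collecting the same elements of $R(g_ig_j)$: the quadratics $(x_m-x_i)(x_m-x_{i+1})$ and $(x_m-x_j)(x_m-x_{j+1})$ for all $m$, obtained from Corollary \ref{basicrel}(4) together with Lemma \ref{incl} and the commutation $g_ig_j=g_jg_i$. The paper then finishes inside the quotient ring: writing $a$ for the image of $x_i+x_{i+1}-x_j-x_{j+1}$ in $A_{d,n}/R(g_ig_j)$, it uses those quadratics to show $x_ia=x_{i+1}a=x_ja=x_{j+1}a$, and then multiplies by the averaging idempotents $e_{k,l}=\frac{1}{d}\sum_{s=0}^{d-1}x_k^sx_l^{-s}$ to conclude $a=e_{i,j}e_{i+1,j+1}a=0$ (the paper writes this out for $i=1$, $j=3$). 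You instead invoke the structure of $A_{d,n}$ as a finite product of fields, one factor per character of $(\Z/d\Z)^n$: every ideal of such an algebra is the vanishing ideal of its zero set of characters, so membership in $R(g_ig_j)$ reduces to pointwise vanishing, and your two-case analysis of the values $\zeta_i,\zeta_{i+1},\zeta_j,\zeta_{j+1}$ finishes the job. The two routes are close cousins --- the paper's $e_{k,l}$ is precisely the indicator function of the set of characters with $\zeta_k=\zeta_l$, so its computation is a disguised form of your vanishing argument --- but yours makes the principle explicit, avoids any ad hoc manipulation, and uses exactly the character-counting viewpoint that the paper itself adopts later (proof of Proposition \ref{R'(1)}); it also applies uniformly to any of the ideals $R(w)$. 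What the paper's version buys is a short, self-contained ring computation that never needs the identification of ${\rm Irr}(A_{d,n})$ with $\mathcal{U}_d^n$. Your closing remark is accurate as well: the quadratic generators are homogeneous of degree two, so the linear target cannot lie in the ideal they generate in the polynomial ring $\C[x_1,\ldots,x_n]$; some use of the relations $x_k^d=1$ --- your semisimplicity reduction, or the paper's idempotents --- is therefore unavoidable.
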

\begin{proof} Without loss of generality, we will prove $(\ref{g_1g_3})$ for $i=1$ and $j=3$. The proof works exactly the same for any $i,\,j$ such that $|i-j|>1$.

By Corollary \ref{basicrel}, we have
\begin{center}
$(x_3-x_1)(x_3-x_2),\,(x_4-x_1)(x_4-x_2) \in R(g_1) $ and $(x_1-x_3)(x_1-x_4),\,(x_2-x_3)(x_2-x_4)\in R(g_3)$.
\end{center}
By Lemma \ref{incl}, we have $R(g_1) \subseteq R(g_1g_3)$. Since $g_1g_3=g_3g_1$, Lemma \ref{incl} also implies that $R(g_3) \subseteq R(g_1g_3)$.
We deduce that
\begin{equation}\label{whatweneed}
(x_3-x_1)(x_3-x_2),\,(x_4-x_1)(x_4-x_2),\,(x_1-x_3)(x_1-x_4),\,(x_2-x_3)(x_2-x_4)\in R(g_1g_3).
\end{equation}

Now  let $a$ be the image of $x_1+x_2-x_3-x_4$ in the quotient  $A_{d,n}/R(g_1g_3)$. We want to show that $a=0$.
Due to (\ref{whatweneed}), in $A_{d,n}/R(g_1g_3),$ we have
\[
x_1a=x_1^2+x_1x_2-x_1x_3-x_1x_4=x_1^2+x_1x_2-x_1x_3-x_1x_4-(x_1-x_3)(x_1-x_4)=x_1x_2-x_3x_4.
\]
Similarly we obtain
\begin{equation}\label{x1a}
x_1a=x_2a=x_3a=x_4a=x_1x_2-x_3x_4.
\end{equation}

Recall that $x_k^d=1$ for all $k=1,\ldots,n$. Set $e_{k,l}:=\frac{1}{d} \sum_{s=0}^{d-1}x_k^sx_l^{-s}$ for $k,l=1,\ldots,n$. The elements $e_{k,l}$ are idempotents of $A_{d,n}$ and we have $e_{k,l}x_k = e_{k,l}x_l$.  

Using (\ref{x1a}), we obtain
$e_{1,3}a = a = e_{2,4}a$, whence $a = e_{1,3}e_{2,4}a$.
However, $e_{1,3}(x_1-x_3)=0=e_{2,4}(x_2-x_4)$, and thus $e_{1,3}e_{2,4}a=0$. We deduce that $a=0$, as desired.
\end{proof}

Now let us see what happens with $R(g_{i,k})$, where $g_{i,k}=g_{i}g_{i-1}\ldots g_{i-k}$ for some $0 \leq k < i \leq n-1$.

\begin{prop}\label{onecycle}
Let $0 \leq k < i \leq n-1$.  The ideal $R(g_{i,k})$ contains the following elements:
\begin{itemize}
\item  $(x_j-x_{i})$ for all $j \in \{i-k,\ldots,i\}$, \smallbreak
\item $(x_j-x_{i})(x_j-x_{i+1})$ for all $1\leq j\leq n$.
\end{itemize}
\end{prop}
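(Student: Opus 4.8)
The plan is to deduce everything from the basic relations recorded in Corollary \ref{basicrel}, combined with the two transfer lemmas for the ideals $R(w)$ — Lemma \ref{incl} (which gives $R(w)\subseteq R(ww')$) and Lemma \ref{sum} (which gives ${}^{\sigma_w}R(w')\subseteq R(ww')$) — proceeding by induction on $k$. The second bullet requires no induction at all. Since $g_{i,k}=g_i\,(g_{i-1}g_{i-2}\ldots g_{i-k})$ has $g_i$ as its left-most factor, Lemma \ref{incl} yields $R(g_i)\subseteq R(g_{i,k})$; and Corollary \ref{basicrel}(4) already states that $(x_j-x_i)(x_j-x_{i+1})\in R(g_i)$ for every $1\le j\le n$. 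Hence $(x_j-x_i)(x_j-x_{i+1})\in R(g_{i,k})$ for all $j$.

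For the first bullet I would induct on $k$, the case $k=0$ (where $g_{i,0}=g_i$ and the only assertion is the trivial $x_i-x_i=0$) being vacuous. For the inductive step I factor $g_{i,k}=g_i\,g_{i-1,k-1}$, where $g_{i-1,k-1}=g_{i-1}g_{i-2}\ldots g_{i-k}$. The inductive hypothesis, applied at index $i-1$ with parameter $k-1$, gives $(x_j-x_{i-1})\in R(g_{i-1,k-1})$ for every $j\in\{i-k,\ldots,i-1\}$. Now Lemma \ref{sum} with $w=g_i$ yields ${}^{s_i}R(g_{i-1,k-1})\subseteq R(g_i\,g_{i-1,k-1})=R(g_{i,k})$, since $\sigma_{g_i}=s_i$. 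The key point is that $s_i=(i,i+1)$ fixes every index $\le i-1$, so ${}^{s_i}(x_j-x_{i-1})=x_j-x_{i-1}$ for exactly these $j$; therefore $(x_j-x_{i-1})\in R(g_{i,k})$ for all $j\in\{i-k,\ldots,i-1\}$. On the other hand $g_{i,1}=g_ig_{i-1}$ is a left factor of $g_{i,k}$, so Lemma \ref{incl} together with Corollary \ref{basicrel}(2) gives $(x_{i-1}-x_i)\in R(g_{i,1})\subseteq R(g_{i,k})$. As $R(g_{i,k})$ is an ideal of $A_{d,n}$, summing the two families yields $(x_j-x_i)=(x_j-x_{i-1})+(x_{i-1}-x_i)\in R(g_{i,k})$ for every $j\in\{i-k,\ldots,i-1\}$, while the case $j=i$ is trivial; this is precisely the first bullet.

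The one genuine subtlety — and the step I would treat most carefully — is the \emph{direction} of the transfer. The inductive data naturally lives in $R(g_{i-1,k-1})$, where $g_{i-1,k-1}$ sits as the \emph{right} factor of $g_{i,k}$, so the naive inclusion of Lemma \ref{incl} (which preserves the \emph{left} factor) does not apply; one is forced to invoke the $\sigma_w$-twisted inclusion of Lemma \ref{sum} coming from (\ref{sigmaw}). What rescues the argument is that the differences $x_j-x_{i-1}$ furnished by the hypothesis involve only indices $\le i-1$, all of which are fixed by the transposition $s_i=\sigma_{g_i}$, so the twist acts trivially on exactly the elements we must move. This is also the structural reason the statement is phrased relative to the endpoints $x_i$ and $x_{i+1}$ of the cycle $g_{i,k}$.
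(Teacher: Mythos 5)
Your proof is correct, and it draws on exactly the same toolkit as the paper's --- Lemma \ref{incl}, Lemma \ref{sum} and Corollary \ref{basicrel} --- but organizes the argument differently. The paper's proof of the first bullet is non-inductive: for each $j\in\{i-k,\ldots,i-1\}$ it views $g_{i,k}$ as $(g_ig_{i-1}\cdots g_{j+2})(g_{j+1}g_j)(g_{j-1}\cdots g_{i-k})$, transfers the relation $(x_j-x_{j+1})\in R(g_{j+1}g_j)$ from Corollary \ref{basicrel}(2) into $R(g_{i,k})$ by Lemma \ref{sum} (with the composite twist $s_is_{i-1}\cdots s_{j+2}$, which fixes the indices $j$ and $j+1$) followed by Lemma \ref{incl}, and then telescopes $(x_j-x_i)=\sum_{l=j}^{i-1}(x_l-x_{l+1})$ using that $R(g_{i,k})$ is an ideal. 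You instead induct on $k$, peeling off the single left factor $g_i$ via the decomposition $g_{i,k}=g_i\,g_{i-1,k-1}$, so your only twist is by the single transposition $s_i$; the top difference $(x_{i-1}-x_i)$ is imported separately through Corollary \ref{basicrel}(2) and Lemma \ref{incl}, and the telescoping is absorbed into the recursion. Unrolling your induction reproduces the paper's composite twists one transposition at a time, so the two arguments are close cousins; what your version buys is that the trivial-action check is only ever made for a single transposition at each step, while the paper's version avoids the induction bookkeeping by doing all the transfers in one shot. Your closing remark about the direction of transfer --- left factors come for free by Lemma \ref{incl}, right factors only via the twisted inclusion of Lemma \ref{sum}, rescued here because the twist fixes all indices below $i$ --- is precisely the mechanism the paper's proof turns on as well.
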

\begin{proof}
First note that, by Lemma \ref{incl}, $R(g_i) \subseteq R(g_{i,k})$. By Corollary \ref{basicrel},   $(x_j-x_{i})(x_j-x_{i+1}) \in R(g_i)$ for all $1\leq j\leq n$.

Now, following Lemma \ref{sum} and Lemma \ref{incl}, we have 
$${}^{s_is_{i-1}\ldots s_{j+2}}R(g_{j+1}g_{j}) \subseteq  R(g_ig_{i-1}\ldots g_{j+2}g_{j+1}g_{j}) \subseteq R(g_ig_{i-1}\ldots g_{j}g_{j-1}\ldots g_{i-k})=R(g_{i,k})$$
for all $j \in \{i-k,i-k+1,\ldots,i-1\}$. By Corollary \ref{basicrel}, we have that $(x_j-x_{j+1}) \in R(g_{j+1}g_{j})$.
We deduce that  ${}^{s_is_{i-1}\ldots s_{j+2}}(x_j-x_{j+1})=(x_j-x_{j+1}) \in R(g_{i,k})$ for all $j \in \{i-k,i-k+1,\ldots,i-1\}$.
Since $ R(g_{i,k})$ is an ideal, we have $(x_j-x_i) = \sum_{l=j}^{i-1}(x_l-x_{l+1})  \in R(g_{i,k})$ for all $j \in \{i-k,i-k+1,\ldots,i-1\}$.
\end{proof}

We are now in position to prove the main result of this subsection. 
\begin{prop}\label{relations} Let $(\ul{i},\ul{k}) \in \SS_n$. The ideal $R(g_{\underline{i},\underline{k}})$ contains the following elements:
\begin{itemize}
\item $(x_j-x_{i_1})$ for all $j \in \{i_1-k_1,\ldots,i_1\}$, \smallbreak
\item $(x_j-x_{i_1})(x_j-x_{i_1+1})$ for all $1\leq j\leq n$, \smallbreak
\end{itemize}
and, for $2 \leq l \leq p$,
\begin{itemize}
\item $(x_j-x_{i_l})$  for all $j \in \{m_l,m_l+1,\ldots,i_l\}$, where $m_l:=\mathrm{max}\{ i_l-k_l, i_{l-1}+2\} $,    \smallbreak
\item $(x_{i_l}+x_{i_l+1}-x_{i_1}-x_{i_1+1})$ if $i_l> i_{l-1}+1$,    \smallbreak
\item $(x_{i_l+1}-x_{i_{l-1}+1})$  if  $i_l-k_l\leq i_{l-1}+1$.
\end{itemize}
\end{prop}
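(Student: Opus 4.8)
The plan is to obtain every element listed in the statement by transferring a relation that is already available for a short sub-word into the ideal $R(g_{\ul{i},\ul{k}})$, using only Lemma \ref{incl} and Lemma \ref{sum}. Throughout I write $w_{<l}:=g_{i_1,k_1}\cdots g_{i_{l-1},k_{l-1}}$ and $w_{>l}:=g_{i_{l+1},k_{l+1}}\cdots g_{i_p,k_p}$, so that $g_{\ul{i},\ul{k}}=w_{<l}\,g_{i_l,k_l}\,w_{>l}$. Two observations keep the bookkeeping under control. First, $R(w)$ depends on a word $w$ only through the element it represents and through $\mathrm{deg}(w)$; hence rewriting $w$ by the commutation relation $(\mathrm{b}_1)$ changes neither the ideal nor the degree, and I may freely reorder commuting generators. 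Second, every generator occurring in $w_{<l}$ has index at most $i_{l-1}$, so each cyclic permutation $\sigma_{g_{i_m,k_m}}$ with $m<l$ moves only indices in $\{i_m-k_m,\dots,i_m+1\}$; consequently $\sigma_{w_{<l}}$ fixes every index $\geq i_{l-1}+2$, and likewise $\sigma_{w_{<l-1}}$ fixes every index $\geq i_{l-2}+2$, in particular $i_{l-1}+1$ and $i_l+1$.

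Three of the bullets reduce immediately to this picture. For the first cycle, Proposition \ref{onecycle} applied to $g_{i_1,k_1}$ gives $(x_j-x_{i_1})\in R(g_{i_1,k_1})$ for $j\in\{i_1-k_1,\dots,i_1\}$ and $(x_j-x_{i_1})(x_j-x_{i_1+1})\in R(g_{i_1,k_1})$ for all $j$; since $g_{i_1,k_1}$ is the left factor of $g_{\ul{i},\ul{k}}$, Lemma \ref{incl} deposits both families in $R(g_{\ul{i},\ul{k}})$. For the first bullet with $l\geq 2$, Proposition \ref{onecycle} gives $(x_j-x_{i_l})\in R(g_{i_l,k_l})\subseteq R(g_{i_l,k_l}w_{>l})$ (Lemma \ref{incl}), and Lemma \ref{sum} yields ${}^{\sigma_{w_{<l}}}(x_j-x_{i_l})\in R(g_{\ul{i},\ul{k}})$; when $j\geq m_l=\max\{i_l-k_l,i_{l-1}+2\}$ both $j$ and $i_l$ exceed $i_{l-1}+1$, so $\sigma_{w_{<l}}$ fixes them and the pushed element is exactly $x_j-x_{i_l}$ (if $i_l<i_{l-1}+2$ the range $\{m_l,\dots,i_l\}$ is empty). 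For the second bullet with $l\geq 2$, the hypothesis $i_l>i_{l-1}+1$ forces $i_l\geq i_1+2$, so $|i_1-i_l|>1$ and Proposition \ref{propg_1g_3} gives $(x_{i_1}+x_{i_1+1}-x_{i_l}-x_{i_l+1})\in R(g_{i_1}g_{i_l})$. Writing $g_{\ul{i},\ul{k}}=g_{i_1}\,X\,g_{i_l}\,Y$, where $X$ collects the generators strictly between $g_{i_1}$ and $g_{i_l}$ (all of index $\leq i_{l-1}\leq i_l-2$), the generator $g_{i_l}$ commutes with $X$, so $g_{\ul{i},\ul{k}}=g_{i_1}g_{i_l}\,XY$ and Lemma \ref{incl} gives $R(g_{i_1}g_{i_l})\subseteq R(g_{\ul{i},\ul{k}})$; negating the element produces $(x_{i_l}+x_{i_l+1}-x_{i_1}-x_{i_1+1})$.

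The last bullet, and the overlapping case it governs, is the main obstacle. Set $a:=i_{l-1}$, $b:=i_l$, and suppose $i_l-k_l\leq a+1$. I would first establish the two-cycle relation $(x_{b+1}-x_{a+1})\in R(g_{i_{l-1},k_{l-1}}g_{i_l,k_l})$. Since $i_l-k_l\leq a+1$, the generator $g_{a+1}$ occurs in the second cycle, which splits as $(g_b\cdots g_{a+2})(g_{a+1}g_a\cdots g_{i_l-k_l})$; the high block $g_b\cdots g_{a+2}$ (indices $\geq a+2$) commutes with the first cycle $g_a\cdots g_{i_{l-1}-k_{l-1}}$ (indices $\leq a$), and sliding the leading $g_{a+1}$ leftwards past $g_{a-1}\cdots g_{i_{l-1}-k_{l-1}}$ rewrites the product, using only $(\mathrm{b}_1)$, as $(g_b\cdots g_{a+2})\,g_a g_{a+1}\,Z$ for a tail $Z$. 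Now $(x_{a+2}-x_{a+1})\in R(g_ag_{a+1})$ by Corollary \ref{basicrel}(3), Lemma \ref{incl} pushes it into $R(g_ag_{a+1}Z)$, and Lemma \ref{sum} with prefix $g_b\cdots g_{a+2}$, whose permutation $\tau=s_b\cdots s_{a+2}$ sends $a+2\mapsto b+1$ and fixes $a+1$, gives ${}^{\tau}(x_{a+2}-x_{a+1})=x_{b+1}-x_{a+1}$ in $R(g_{i_{l-1},k_{l-1}}g_{i_l,k_l})$ (when $b=a+1$ the prefix is empty and the relation is direct). Finally, because $\sigma_{w_{<l-1}}$ fixes $a+1=i_{l-1}+1$ and $b+1=i_l+1$, Lemma \ref{incl} followed by Lemma \ref{sum} (with $w=w_{<l-1}$) transports $(x_{i_l+1}-x_{i_{l-1}+1})$ unchanged into $R(g_{\ul{i},\ul{k}})$. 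The delicate point throughout this last step is verifying that the split of the second cycle at index $a+1$ and the commutation of the high block past the first cycle are legitimate — this rests on the $\SS_n$-conditions $i_{l-1}<i_l$ and $i_{l-1}-k_{l-1}<i_l-k_l$ — and tracking the single cycle $\tau=s_b\cdots s_{a+2}$ so that precisely the indices $a+1$ and $b+1$ survive; the remaining bullets require, after at most one commutation, only a direct appeal to Proposition \ref{onecycle}, Proposition \ref{propg_1g_3} or Corollary \ref{basicrel}.
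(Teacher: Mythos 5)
Your proof is correct and follows essentially the same route as the paper's: the same appeals to Proposition \ref{onecycle}, Proposition \ref{propg_1g_3} and Corollary \ref{basicrel}, the same commutation rewritings of adjacent cycles (including the identical splitting of the overlapping cycle into a high block followed by $g_{i_{l-1}}g_{i_{l-1}+1}$), and the same transport of relations via Lemmas \ref{incl} and \ref{sum}. The only difference is organizational: the paper proceeds by induction on the number of cycles $p$, handling just the last cycle at each step, whereas you unroll that induction and treat each $l$ directly through the decomposition $g_{\ul{i},\ul{k}} = w_{<l}\,g_{i_l,k_l}\,w_{>l}$.
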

\begin{proof}
We will proceed by induction on $p$, the number of ``cycles'' in $g_{\ul{i},\ul{k}}$. 
For $p=1$, the result is given by Proposition \ref{onecycle}. Now let $p>1$, and suppose that the result holds for $p-1$.
We have 
\[
g_{\ul{i},\ul{k}}=g_{i_1,k_1}g_{i_2,k_2}\ldots g_{i_{p-1},k_{p-1}} g_{i_p,k_p}.
\]
Let $\ul{i}'=(i_1,\ldots,i_{p-1})$ and $\ul{k}'=(k_1,\ldots,k_{p-1})$. We have  $(\ul{i}',\ul{k}') \in \SS_n$  and
\[
g_{\ul{i},\ul{k}}=g_{\ul{i}',\ul{k}'}g_{i_p,k_p}.
\]
By Lemma \ref{incl}, we have $R(g_{\ul{i}',\ul{k}'}) \subseteq R(g_{\underline{i},\underline{k}})$. Thus, thanks to the induction hypothesis, we only need to see what happens for $l=p$.

By Proposition \ref{onecycle}, we have that $(x_j-x_{i_p}) \in R(g_{i_p,k_p})$ for all $j \in  \{i_p-k_p,\ldots,i_p\}$.
Following Lemma \ref{sum}, we have ${}^{s_{\ul{i}',\ul{k}'}}(x_j-x_{i_p}) \in R(g_{\underline{i},\underline{k}})$ for all $j \in  \{i_p-k_p,\ldots,i_p\}$.
Set  $m_p:=\mathrm{max}\{ i_p-k_p, i_{p-1}+2\}$. For $\{m_p,m_p+1,\ldots,i_p\}$ to be non-empty, we must have $i_p > i_{p-1}+1$.
Then, for $j \in \{m_p,m_p+1,\ldots,i_p\}$, we have ${}^{s_{\ul{i}',\ul{k}'}}(x_j-x_{i_p})=(x_j-x_{i_p})$.
So $(x_j-x_{i_p}) \in R(g_{\underline{i},\underline{k}})$ for all   $j \in \{m_p,m_p+1\ldots,i_p\}$.

If $i_p >  i_{p-1}+1$, then $g_{i_p}$ commutes with  $g_{\ul{i}',\ul{k}'}$, and we have
$$g_{\ul{i},\ul{k}}=g_{i_p} g_{\ul{i}',\ul{k}'} g_{i_p-1}\ldots g_{i_p-k_p}.$$
By Proposition \ref{propg_1g_3}, we have $(x_{i_p}+x_{i_p+1}-x_{i_1}-x_{i_1+1}) \in R(g_{i_p}g_{i_1})$, and by Lemma \ref{incl}, $R(g_{i_p}g_{i_1}) \subseteq R(g_{\underline{i},\underline{k}})$.

Finally, if $i_p-k_p \leq i_{p-1}+1$, and since $i_{p-1}+1 \leq i_p$, then 
$$g_{i_p,k_p}=g_{i_p}g_{i_p-1}\ldots g_{i_{p-1}+2}g_{i_{p-1}+1} \ldots g_{i_p-k_p},$$
and we have 
$$g_{\ul{i},\ul{k}}=g_{i_1,k_1}g_{i_2,k_2}\ldots g_{i_{p-2},k_{p-2}} g_{i_p}g_{i_p-1}\ldots g_{i_{p-1}+2} g_{i_{p-1}}g_{i_{p-1}+1} g_{i_{p-1}-1}\ldots g_{i_{p-1}-k_{p-1}}
g_{i_{p-1}} \ldots g_{i_{p}-k_p}$$
  (where $ g_{i_{p-1}} \ldots g_{i_{p}-k_p}$ is taken to be equal to $1$ if $i_p-k_p = i_{p-1}+1$).
By Corollary \ref{basicrel}, we have 
$$(x_{i_{p-1}+2}-x_{i_{p-1}+1}) \in R(g_{i_{p-1}}g_{i_{p-1}+1}),$$ and by Lemma \ref{sum}, we get
$${}^{\sigma}   (x_{i_{p-1}+2}-x_{i_{p-1}+1})
\in R(g_{i_1,k_1}g_{i_2,k_2}\ldots g_{i_{p-2},k_{p-2}} g_{i_p}g_{i_p-1}\ldots g_{i_{p-1}+2} g_{i_{p-1}}g_{i_{p-1}+1}),$$
where $\sigma:=s_{i_1,k_1}s_{i_2,k_2}\ldots s_{i_{p-2},k_{p-2}} s_{i_p}s_{i_p-1}\ldots s_{i_{p-1}+2}.$
We have 
$${}^{\sigma}   (x_{i_{p-1}+2}-x_{i_{p-1}+1}) =   x_{i_{p}+1}-x_{i_{p-1}+1},$$
and by Lemma \ref{incl},
$$R(g_{i_1,k_1}g_{i_2,k_2}\ldots g_{i_{p-2},k_{p-2}} g_{i_p}g_{i_p-1}\ldots g_{i_{p-1}+2} g_{i_{p-1}}g_{i_{p-1}+1}) \subseteq R(g_{\ul{i},\ul{k}}).$$
Thus, $(x_{i_{p}+1}-x_{i_{p-1}+1}) \in  R(g_{\ul{i},\ul{k}}).$
\end{proof}

\subsection{Quotients of the algebra $A_{d,n}$ and a new spanning set for $\YTL$}

As we mentioned in the previous subsection, our aim is to study the quotient spaces $A_{d,n}/R(g_{\ul{i},\ul{k}})$ for $(\ul{i},\ul{k})\in  \SS_n$. In this subsection, we will study the quotients $A_{d,n}/R'(g_{\ul{i},\ul{k}})$, where $R'(g_{\ul{i},\ul{k}})$ is defined as follows:
\begin{itemize}
\item $R'(1)$ is the ideal of $A_{d,n}$ generated by $(x_i-x_j)(x_i-x_k)(x_j-x_k)$ for all $1 \leq i < j < k \leq n$ ; \smallbreak
\item $R'(g_{\ul{i},\ul{k}})$ is the ideal generated by $R'(1)$ and all the elements of $R(g_{\underline{i},\underline{k}})$ given in Proposition \ref{relations}.
\end{itemize}
By Corollary \ref{basicrel}, we have $R'(1) \subseteq R(1)$. Due to
Corollary \ref{R(1)} and Proposition \ref{relations}, we have $R'(g_{\ul{i},\ul{k}}) \subseteq R(g_{\ul{i},\ul{k}})$
for all  $(\ul{i},\ul{k})\in  \SS_n$. The inverse iclusion will be established in the next subsection, when we obtain that
 ${\rm dim}_{\C}(A_{d,n}/R'(g_{\ul{i},\ul{k}}))={\rm dim}_{\C}(A_{d,n}/R(g_{\ul{i},\ul{k}}))$.

Before we proceed, we will need to introduce the following notion of ``division'' in the basis $B_{d,n}$ of $A_{d,n}$.

\begin{defn}{\rm
We will say that a  monomial $M_1=x_1^{a_1}x_2^{a_2}\ldots x_n^{a_n}\in B_{d,n}$ {\em divides} another monomial $M_2=x_1^{b_1}x_2^{b_2}\ldots x_n^{b_n}\in B_{d,n}$, and  write $M_1 | M_2$, if
$0 \leq a_i \leq b_i \leq d-1$ for all $i=1,2,\ldots,n$. We will also say that $M_2$ is {\em divisible by} $M_1$.}
\end{defn}

\begin{prop}\label{R'(1)}
We have
$${\rm dim}_{\C}(A_{d,n}/R'(1)) =(2^{n-1}-1)d^2-(2^{n-1}-2)d.$$
Set
\[
\mathcal{B}_{d,n}(1):=\{ x_i^ax_{i+1}^b\prod_{j=i+2}^n x_j^{\epsilon_j} \ | \ 1 \leq i < n, \, 1\leq a <d, \,0\leq b <d,\, \epsilon_j\in\{0, 1\}\}\cup \{ x_n^b \ | \ 0\leq b <d \},
\]
that is, $\mathcal{B}_{2,n}(1)=B_{2,n}$, while for $d>2$,
$$\mathcal{B}_{d,n}(1)=\{M \in B_{d,n}\,|\, M \text{ is not divisible by  }x_ix_k^2 \text{ for all } 1 \leq i < k \leq n \text{ such that } k-i \geq 2\\\}.$$
The set
$$\{ M + R'(1) \ | \ M \in \mathcal{B}_{d,n}(1) \}$$
is a basis of $A_{d,n}/R'(1)$.
\end{prop}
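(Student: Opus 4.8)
The plan is to obtain the dimension of $A_{d,n}/R'(1)$ from the semisimple structure of $A_{d,n}$, and then to prove that $\mathcal{B}_{d,n}(1)$ spans the quotient; since its cardinality will match the dimension, it is then automatically a basis. First I would recall that $A_{d,n}$ is the group algebra over $\C$ of the finite abelian group $(\Z/d\Z)^n$, hence split semisimple, and that its irreducible characters are the algebra morphisms $\chi_{\mathbf c}\colon A_{d,n}\to\C$ indexed by $\mathbf c=(c_1,\dots,c_n)\in(\Z/d\Z)^n$ and determined by $\chi_{\mathbf c}(x_j)=\zeta^{c_j}$, where $\zeta$ is a fixed primitive $d$-th root of unity. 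Since $A_{d,n}/R'(1)$ is again commutative and semisimple, its dimension equals the number of its irreducible characters, and these are exactly the $\chi_{\mathbf c}$ vanishing on every generator $(x_i-x_j)(x_i-x_k)(x_j-x_k)$ of $R'(1)$. As $\chi_{\mathbf c}$ sends this generator to $(\zeta^{c_i}-\zeta^{c_j})(\zeta^{c_i}-\zeta^{c_k})(\zeta^{c_j}-\zeta^{c_k})$, which vanishes precisely when two of $c_i,c_j,c_k$ coincide modulo $d$, the character $\chi_{\mathbf c}$ survives in the quotient if and only if every triple among $c_1,\dots,c_n$ has a repeated value, that is, if and only if $\{c_1,\dots,c_n\}$ consists of at most two distinct residues.

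Counting such tuples gives the dimension directly: there are $d$ constant tuples and $\binom{d}{2}(2^n-2)$ tuples taking exactly two values (choose the pair of values, then a surjection of the $n$ coordinates onto it). A short simplification of $d+\binom{d}{2}(2^n-2)$ yields $(2^{n-1}-1)d^2-(2^{n-1}-2)d$, the claimed dimension. In parallel I would count $\mathcal{B}_{d,n}(1)$ from its explicit description: the family $\{x_n^b\}$ contributes $d$ elements, while for each leading position $i\in\{1,\dots,n-1\}$ there are $(d-1)$ choices for $a$, $d$ for $b$ and $2^{n-i-1}$ for $(\epsilon_{i+2},\dots,\epsilon_n)$, so the first (disjoint) family contributes $(d-1)d\sum_{i=1}^{n-1}2^{n-i-1}=(d-1)d(2^{n-1}-1)$; the total again simplifies to $(2^{n-1}-1)d^2-(2^{n-1}-2)d$. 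One also checks the equivalence of the two descriptions of $\mathcal{B}_{d,n}(1)$: writing $i$ for the smallest nonzero exponent of a monomial, membership in the first family means all exponents at positions $\geq i+2$ are at most $1$, which is exactly the condition of not being divisible by any $x_ix_k^2$ with $k-i\geq 2$.

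It then remains to show that $\{M+R'(1)\mid M\in\mathcal{B}_{d,n}(1)\}$ spans $A_{d,n}/R'(1)$; combined with the equality of cardinality and dimension this gives that it is a basis, proving both assertions. Given a monomial $M=x_1^{b_1}\cdots x_n^{b_n}\in B_{d,n}$ not in $\mathcal{B}_{d,n}(1)$, let $i$ be its smallest nonzero position; then $M$ is divisible by $x_ix_k^2$ for some $k\geq i+2$ with $b_k\geq 2$. Working modulo $R'(1)$, I would use the generator for the triple $(i,i+1,k)$, namely $(x_i-x_{i+1})(x_i-x_k)(x_{i+1}-x_k)\equiv 0$, to substitute
\[
x_ix_k^2\;\equiv\; x_i^2x_k+x_ix_{i+1}^2+x_{i+1}x_k^2-x_i^2x_{i+1}-x_{i+1}^2x_k,
\]
and multiply through by $M/(x_ix_k^2)$, reducing exponents modulo $d$ via $x_\ell^d=1$. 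This rewrites $M$ as a $\C$-linear combination of five monomials, each with leftmost nonzero position $\geq i$.

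The main obstacle is the termination of this rewriting, which I would organise as a double induction. The outer induction is on the leftmost nonzero position $i$, which never decreases under the substitution and whose base cases $i\in\{n-1,n\}$ give monomials already lying in $\mathcal{B}_{d,n}(1)$. Fixing $i$ and setting $D:=\sum_{\ell\geq i+2}b_\ell$, each of the five monomials produced either advances the leading position strictly to the right—and is then covered by the outer induction hypothesis—or retains the leading position $i$, in which case it strictly decreases the lexicographically ordered pair $(D,b_i)$: the exponent at $k$ drops (lowering $D$) in all position-retaining terms except $x_{i+1}x_k^2\cdot M/(x_ix_k^2)$, which instead leaves $D$ unchanged while lowering $b_i$ by one. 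Since $(D,b_i)$ ranges over a well-ordered set, the reduction terminates and expresses every monomial modulo $R'(1)$ as a combination of elements of $\mathcal{B}_{d,n}(1)$. This establishes spanning, and as $|\mathcal{B}_{d,n}(1)|$ equals $\dim_\C(A_{d,n}/R'(1))$, the spanning set is a basis, completing the proof.
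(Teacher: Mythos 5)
Your proposal is correct and follows essentially the same route as the paper: you compute the dimension by counting the one-dimensional characters of $A_{d,n}$ that kill the generators of $R'(1)$, match the cardinality of $\mathcal{B}_{d,n}(1)$ to it, and prove spanning via the identical substitution $x_ix_k^2\equiv x_i^2x_k+x_ix_{i+1}^2+x_{i+1}x_k^2-x_i^2x_{i+1}-x_{i+1}^2x_k$ arising from the relation for the triple $(i,i+1,k)$. The only (minor) difference is the termination bookkeeping: the paper inducts on $k_M-i_M$, where $i_M$ is the leftmost nonzero position and $k_M$ the rightmost squared position, with an inner weighted-degree argument, whereas you run a double induction on the leading position and the lexicographic pair $(D,b_i)$; both organise the same rewriting process.
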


\begin{proof} 
Since the group $(\mathbb{Z}/d\mathbb{Z})^n$ is abelian and $A_{d,n}$ is the group algebra of $(\mathbb{Z}/d\mathbb{Z})^n$ over $\C$, all irreducible representations of $A_{d,n}$ are of dimension $1$. Let $\mathcal{U}_d$ denote the set of all $d$-th roots of unity.
 We have a bijection
from the set ${\rm Irr}(A_{d,n})$ of irreducible representations of $A_{d,n}$ to $\mathcal{U}_d^n$
given by $\rho \mapsto (\rho(x_1),\rho(x_2),\ldots,\rho(x_n))$. 
From now on, we will identify ${\rm Irr}(A_{d,n})$ with $\mathcal{U}_d^n$.
Note that since the algebra $A_{d,n}$ is semisimple, so are its quotients.
We deduce that
$${\rm dim}_{\C}(A_{d,n}/R'(1)) = |\{(z_1,z_2,\ldots,z_n) \in \mathcal{U}_d^n\,|\,(z_i-z_j)(z_i-z_k)(z_j-z_k)=0  \text{ for all } 1 \leq i < j <k \leq n\}|.$$
This in turn is equal to the number of functions from $\{1,2,\ldots,n\}$ to $\{1,2,\ldots,d\}$ whose image has at most $2$ elements. We thus obtain
$${\rm dim}_{\C}(A_{d,n}/R'(1)) = (2^n-2) \binom{d}{2} + d= 
(2^{n-1}-1)d^2-(2^{n-1}-2)d.$$

We now have
\[
|\mathcal{B}_{d,n}(1)|=d+\sum_{i=1}^{n-1} d(d-1)2^{n-i-1}=d+ d(d-1)(2^{n-1}-1)=(2^{n-1}-1)d^2-(2^{n-1}-2)d={\rm dim}_{\C}(A_{d,n}/R'(1)).
\]
So it is enough to show that 
$$
\{ M + R'(1) \ | \ M \in \mathcal{B}_{d,n}(1) \}
$$
is a spanning set for  $A_{d,n}/R'(1)$.

For $d=2$, we have $R'(1) = \{0\}$ and $\mathcal{B}_{2,n}(1)=B_{2,n}$, the basis of $A_{2,n}$. 
From now on, we assume that $d>2$ and we identify the elements of $A_{d,n}$ with their images in $A_{d,n}/R'(1)$.
Let $M \in B_{d,n}$. 
Set 
$$i_M := {\rm min} \{ i \ | \ x_i \text{ divides } M \} \quad \text{ and } \quad    k_M := {\rm max} \{ k \ | \ x_k^2 \text{ divides } M \}.$$
If the first set is empty, then $M=1 \in \mathcal{B}_{d,n}(1)$, while if the second set is empty, then again $M \in \mathcal{B}_{d,n}(1)$.
We now assume that $i_M, k_M \in \{1,2,\ldots,n\}$. We obviously have $i_M \leq k_M$. 
We will  show by induction on the difference $k_M - i_M$ that $M$ can be written as a linear combination of elements in $\mathcal{B}_{d,n}(1)$.

If $k_M - i_M=0$, then $M$ is of the form $x_{i_M}^a \prod_{j=i_M+1}^nx_j^{\epsilon_j} $ where $2 \leq a <d$ and  $\epsilon_j\in\{0, 1\}$. By definition, $M \in \mathcal{B}_{d,n}(1)$.
If $k_M - i_M=1$, then $M$ is of the form $x_{i_M}^ax_{i_M+1}^b \prod_{j=i_M+2}^nx_j^{\epsilon_j} $ where $1 \leq a <d$, $2 \leq b <d$ and  $\epsilon_j\in\{0, 1\}$. Again, $M \in \mathcal{B}_{d,n}(1)$.
Now let $k_M - i_M \geq 2$ and assume that all monomials $M' \in  B_{d,n}$ with $k_{M'} - i_{M'}<k_M - i_M$ can be  written as linear combinations of elements in $\mathcal{B}_{d,n}(1)$.
We have that $x_{i_M}x_{k_M}^2$ divides $M$ and that
$$(x_{i_M} - x_{i_M+1})(x_{i_M} - x_{k_M}) (x_{i_M+1}-x_{k_M})=0,$$
whence
\begin{equation}\label{6terms}
x_{i_M}x_{k_M}^2=x_{i_M+1}x_{k_M}^2+x_{i_M}x_{i_M+1}^2+x_{i_M}^2x_{k_M}-x_{i_M+1}^2x_{k_M}-x_{i_M}^2x_{i_M+1}.
\end{equation}
Starting with $M$, we replace $x_{i_M}x_{k_M}^2$ by the expression above, and we obtain
a description of $M$ as linear combination of 5 monomials that might be divisible by $x_{i_M}x_{k_M}^2$ or not. If some of them are, then we replace again 
$x_{i_M}x_{k_M}^2$ with the use of (\ref{6terms}), and we go on until $M$ is a expressed as a linear combination of monomials $M'$ that are not divisible by $x_{i_M}x_{k_M}^2$ (the process eventually ends, as the sum of the power of  $x_{i_M}$ and twice the power of $x_{k_M}$ decreases at every step).
Since the only new generator appearing in the decomposition given by (\ref{6terms}) is $x_{i_M+1}$, we must have $k_{M'}  \leq k_M$ and $i_{M'} \geq i_M$ for each such $M'$, with one of the two inequalities being necessarily strict.
In every case, we have $k_{M'} - i_{M'}<k_M - i_M$ and the induction hypothesis yields the desired result.
\end{proof}

Next we want to study  the quotient $A_{d,n}/R'(g_{\ul{i},\ul{k}})$ for any $(\ul{i},\ul{k})\in \SS_n$. For this, we will introduce some notation.

\begin{defn}{\rm Let $(\ul{i},\ul{k})\in \SS_n$.
We denote by $\mathcal{I}(g_{\underline{i},\underline{k}})$ the set of all indices of the $g_j$'s appearing in $g_{\underline{i},\underline{k}}$, \emph{i.e.,}
$$\mathcal{I}(g_{\underline{i},\underline{k}})=\{i_1,i_1-1,\ldots,i_1-k_1,i_2,i_2-1,\ldots,i_2-k_2,\ldots, i_p,i_p-1,\ldots,i_p-k_p\}.$$
We define the \emph{weight} of $g_{\underline{i},\underline{k}}$ to be
 $wt(g_{\underline{i},\underline{k}}):=|\mathcal{I}(g_{\underline{i},\underline{k}})|$.
 For $j \in  \mathcal{I}(g_{\underline{i},\underline{k}})$, we set
$$ l_j := {\rm min}\{ 1 \leq l \leq p \ | \ i_l-k_l \leq j \leq i_l \}.$$
}\end{defn}

\begin{exmp}{\rm
Let $n=5$ and $g_{\ul{i},\ul{k}} = g_2g_1g_4g_3g_2$. Then
$wt(g_{\underline{i},\underline{k}})=4$, $l_1=l_2=1$ and $l_3=l_4=2$.}
\end{exmp}

\begin{prop}\label{above}
Let $(\ul{i},\ul{k})\in \SS_n \setminus \{(\emptyset,\emptyset)\}$ and let
$j\in \mathcal{I}(g_{\ul{i},\ul{k}})$. In the quotient $A_{d,n}/R'(g_{\ul{i},\ul{k}})$, $x_j$ can be expressed as a linear combination of 
 $x_{i_1}$ and $(x_s)_{s \notin \mathcal{I}(g_{\underline{i},\underline{k}})}$.
\end{prop}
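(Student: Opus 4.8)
The plan is to work entirely with the degree-one (linear) generators of the ideal $R'(g_{\ul{i},\ul{k}})$ listed in Proposition \ref{relations}, and to reduce every $x_j$ with $j \in \mathcal{I}(g_{\ul{i},\ul{k}})$ to $x_{i_1}$ together with the ``external'' variables $x_s$, $s \notin \mathcal{I}(g_{\ul{i},\ul{k}})$. The heart of the argument is to first understand the variables $x_{i_l+1}$ for $1 \leq l \leq p$: I claim each of them is congruent modulo $R'(g_{\ul{i},\ul{k}})$ to a single external variable. Granting this, the remaining indices fall out quickly, since the relations $(x_j-x_{i_1})$ (for $j$ in the first cycle) and $(x_j-x_{i_l})$ (for $j\in\{m_l,\ldots,i_l\}$) collapse almost all internal variables onto $x_{i_1}$ or onto some $x_{i_l}$, while the ``gap'' relation $(x_{i_l}+x_{i_l+1}-x_{i_1}-x_{i_1+1})$ rewrites $x_{i_l}$ itself in terms of $x_{i_1}$, $x_{i_1+1}$ and $x_{i_l+1}$.

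To prove the claim about the $x_{i_l+1}$, the first step is a purely combinatorial observation: for $l<p$ one has $i_l+1\in\mathcal{I}(g_{\ul{i},\ul{k}})$ if and only if cycle $l+1$ reaches back far enough, i.e. $i_{l+1}-k_{l+1}\leq i_l+1$ (the starts $i_{l'}-k_{l'}$ are strictly increasing, so no later cycle can cover $i_l+1$ if cycle $l+1$ fails to), and $i_p+1$ is always external. But this condition is exactly the hypothesis under which, by Proposition \ref{relations} applied with index $l+1$, the overlap relation $(x_{i_{l+1}+1}-x_{i_l+1})\in R'(g_{\ul{i},\ul{k}})$. Reading this the other way, the overlap relations chain the variables $x_{i_l+1}$ together along maximal blocks $l_0,l_0+1,\ldots,l_1$ of consecutive overlapping cycles, giving $x_{i_{l_0}+1}\equiv x_{i_{l_0+1}+1}\equiv\cdots\equiv x_{i_{l_1}+1}$; the top index $i_{l_1}+1$ is external, because cycle $l_1+1$ does not overlap cycle $l_1$ (or $l_1=p$). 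Hence every $x_{i_l+1}$ is congruent to the external variable $x_{i_{l_1}+1}$, which proves the claim.

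With this in hand I would finish by locating each $j\in\mathcal{I}(g_{\ul{i},\ul{k}})$ according to $l=l_j$. If $l=1$, then $(x_j-x_{i_1})\in R'(g_{\ul{i},\ul{k}})$ and $x_j\equiv x_{i_1}$. If $l\geq 2$, the genuinely new indices of cycle $l$ form the range $\{\max(i_{l-1}+1,\,i_l-k_l),\ldots,i_l\}$, and a short check shows the only one lying below $m_l=\max\{i_l-k_l,\,i_{l-1}+2\}$ is $j=i_{l-1}+1$, occurring precisely when cycle $l$ overlaps cycle $l-1$; for it $x_j=x_{i_{l-1}+1}$ is external by the claim (applied to index $l-1$). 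Every other new $j$ satisfies $j\geq m_l$, so $x_j\equiv x_{i_l}$, and $x_{i_l}$ is itself reduced: in the gap case $i_l>i_{l-1}+1$ the relation above yields $x_{i_l}\equiv x_{i_1}+x_{i_1+1}-x_{i_l+1}$, a combination of $x_{i_1}$ and two external variables, while in the remaining case $i_l=i_{l-1}+1$ the variable $x_{i_l}$ literally equals $x_{i_{l-1}+1}$, again external by the claim.

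I expect the main obstacle to be precisely the bookkeeping in the second paragraph: correctly identifying when $i_l+1$ is internal versus external, verifying that the overlapping blocks are well defined and that each terminates at an external variable, and handling the degenerate no-gap case $i_l=i_{l-1}+1$, where the ``upper'' range $\{m_l,\ldots,i_l\}$ is empty and $x_{i_l}$ must be recognised as $x_{i_{l-1}+1}$ rather than reduced through a gap relation. Once the overlap chains are set up cleanly the remainder is a routine case analysis using only the linear relations of Proposition \ref{relations}.
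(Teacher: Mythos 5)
Your proof is correct and takes essentially the same approach as the paper's: the same case analysis on $l_j$ (first cycle, interior/gap case via $x_j\equiv x_{i_l}$ and the relation $x_{i_l}+x_{i_l+1}-x_{i_1}-x_{i_1+1}$, and the overlap boundary case $j=i_{l-1}+1$) using the linear relations of Proposition \ref{relations}, together with the same chaining of overlap relations $x_{i_l+1}\equiv x_{i_{l+1}+1}$ terminating at the external variable $x_{i_{L(l)}+1}$. The only difference is organisational: you isolate the chaining (and the combinatorial fact that $i_l+1\in\mathcal{I}(g_{\ul{i},\ul{k}})$ exactly when cycle $l+1$ overlaps cycle $l$) as a preliminary claim, whereas the paper performs the case analysis first and chains afterwards.
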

\begin{proof} We identify the elements of $A_{d,n}$ with their images in $A_{d,n}/R'(g_{\ul{i},\ul{k}})$.
If $l_j=1$, then, by Proposition \ref{relations}, we have $x_j=x_{i_1}$ in the quotient $A_{d,n}/R'(g_{\ul{i},\ul{k}})$.
Now suppose $l_j>1$.  Then $j > i_{l_j-1}$.
We will have to distinguish two cases.

If $j>i_{l_j-1}+1$, then, by Proposition \ref{relations}, we have $x_j=x_{i_{l_j}}$.
Moreover, since $i_{l_j} \geq j >i_{l_j-1}+1$, we have
$x_{i_{l_j}}=x_{i_1}+x_{i_1+1}-x_{i_{l_j}+1}$.

If  $j=i_{l_j-1}+1$, then $ i_{l_j-1}+1 \geq i_{l_j}-k_{l_j}$ and, by Proposition \ref{relations}, we have $x_j=x_{i_{l_j}+1}$.

Thus, if  $j\in \mathcal{I}(g_{\ul{i},\ul{k}})$, then $x_j$ can be always expressed as a linear combination of $x_{i_1}$, $x_{i_1+1}$ and $x_{i_{l_j}+1}$.
In particular, if $j = i_{l-1}+1 \in \mathcal{I}(g_{\ul{i},\ul{k}})$ for some $2 \leq l \leq p$, then $l_j=l$ and $x_j=x_{i_{l}+1}$. If now $i_{l}+1 \in \mathcal{I}(g_{\ul{i},\ul{k}})$, we can repeat the same argument and obtain
$x_{i_{l}+1}=x_{i_{l+1}+1}$. We can go on until we find the first $l \leq L \leq p$ such that $i_{L}+1 \notin \mathcal{I}(g_{\ul{i},\ul{k}})$ (note that $i_p+1 \notin \mathcal{I}(g_{\ul{i},\ul{k}})$, so the process will eventually stop).  We then have $x_j = x_{i_L+1}$.

For $1 \leq l \leq p$, set 
\begin{equation}\label{L(l)}
L(l) : = {\rm min}\{l \leq L \leq p\,|\, i_{L}+1 \notin \mathcal{I}(g_{\underline{i},\underline{k}})\}.
\end{equation}
Following the discussion in the above paragraph, we have $x_{i_1+1}=x_{i_{L(1)}+1}$ and $x_{i_{l_j}+1}=x_{i_{L(l_j)+1}}$ for any $j \in  \mathcal{I}(g_{\ul{i},\ul{k}})$.
We conclude that if $j\in \mathcal{I}(g_{\ul{i},\ul{k}})$, then $x_j$ can be expressed as a linear combination of  $x_{i_1}$ and $(x_s)_{s \notin \mathcal{I}(g_{\underline{i},\underline{k}})}$.
\end{proof}

We are now ready to state and prove the second main result of this subsection.

 \begin{prop}\label{spanning}
 Let $(\ul{i},\ul{k})\in \SS_n \setminus \{(\emptyset,\emptyset)\}$ and let $m:=wt(g_{\underline{i},\underline{k}})$. 
 Let $L(1)$ be defined as in $(\ref{L(l)})$. We set
 $$ \widetilde{x}_1:=x_{i_1},\,\,\widetilde{x}_2:=x_{i_{L(1)}+1}\,\,\text{ and }\,\,\{\widetilde{x}_3,\ldots, \widetilde{x}_{n-m+1}\}=\{ x_s\,|\, s \notin  \mathcal{I}(g_{\underline{i},\underline{k}}),\, s \neq i_{L(1)}+1\}.$$
Set $N:=n-m+1$. Let
 $$\widetilde{B}_{d,N}:=\{ \widetilde{x}_1^{r_1} \widetilde{x}_2^{r_2}\ldots  \widetilde{x}_{N}^{r_{N}}\,\,|\,\,(r_1,\ldots,r_N) \in \{0,\ldots,d-1\}^N\,\}$$
 and 
$$\widetilde{\mathcal{B}}_{d,N}(1)=\{ \widetilde{x}_i^a\widetilde{x}_{i+1}^b\prod_{j=i+2}^{N} \widetilde{x}_j^{\epsilon_j} \ | \ 1 \leq i < N, \, 1\leq a <d, \,0\leq b <d,\, \epsilon_j\in\{0, 1\}\}\cup \{ \widetilde{x}_{N}^b \ | \ 0\leq b <d \}$$
 (as in Proposition \ref{R'(1)}). Set
$$
\mathcal{B}_{d,n}(g_{\underline{i},\underline{k}})= \{ \widetilde{x}_i^a\widetilde{x}_{i+1}^b\prod_{j=i+2}^{N} \widetilde{x}_j^{\epsilon_j} \ | \ 2 \leq i < N, \, 1\leq a <d, \,0\leq b <d,\, \epsilon_j\in\{0, 1\}\} $$ $$\cup \,\{ \widetilde{x}_{N}^b \ | \ 0\leq b <d \}\cup\{ \widetilde{x}_1^a\widetilde{x}_{2}^b \ | \ 1\leq a <d,\, 0\leq b <d \},
$$
 that is, 
$$\mathcal{B}_{d,n}(g_{\underline{i},\underline{k}}):=\widetilde{\mathcal{B}}_{d,N}(1) \cap \{M \in \widetilde{B}_{d,N}\,|\,M \text{ is not divisible by }   \widetilde{x}_{1}\widetilde{x}_j \text{ for all } j >2\}.$$
 The set
$$\{ M + R'(g_{\underline{i},\underline{k}}) \ | \ M \in \mathcal{B}_{d,n}(g_{\underline{i},\underline{k}})\}$$
is a spanning set for $A_{d,n}/R'(g_{\underline{i},\underline{k}})$.
  \end{prop}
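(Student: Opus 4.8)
The plan is to transport the whole question to the variables $\widetilde{x}_1,\ldots,\widetilde{x}_N$ and to reduce the natural spanning set $\widetilde{B}_{d,N}$ in two stages, mirroring the structure of Proposition \ref{R'(1)}, and then to discard the monomials divisible by $\widetilde{x}_1\widetilde{x}_j$ for some $j>2$. First I would invoke Proposition \ref{above}: in $A_{d,n}/R'(g_{\ul{i},\ul{k}})$ every $x_j$ with $j\in\mathcal{I}(g_{\ul{i},\ul{k}})$ is a linear combination of $\widetilde{x}_1=x_{i_1}$ and of the $x_s$ with $s\notin\mathcal{I}(g_{\ul{i},\ul{k}})$, and these are precisely $\widetilde{x}_1,\ldots,\widetilde{x}_N$. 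Hence every monomial in $x_1,\ldots,x_n$ expands, modulo $R'(g_{\ul{i},\ul{k}})$, into a linear combination of monomials in $\widetilde{x}_1,\ldots,\widetilde{x}_N$; since $\widetilde{x}_i^{\,d}=1$, the quotient is spanned by the images of $\widetilde{B}_{d,N}$.

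Next I would record the relations available among the $\widetilde{x}$'s. As $R'(1)\subseteq R'(g_{\ul{i},\ul{k}})$ by Corollary \ref{R(1)}, the triple relations $(\widetilde{x}_a-\widetilde{x}_b)(\widetilde{x}_a-\widetilde{x}_c)(\widetilde{x}_b-\widetilde{x}_c)=0$ hold for all distinct $a,b,c$. Moreover Proposition \ref{relations} gives $(x_j-x_{i_1})(x_j-x_{i_1+1})\in R'(g_{\ul{i},\ul{k}})$ for all $j$, and since $x_{i_1+1}=\widetilde{x}_2$ in the quotient (this is the identity $x_{i_1+1}=x_{i_{L(1)}+1}$ from the proof of Proposition \ref{above}), we obtain $(\widetilde{x}_l-\widetilde{x}_1)(\widetilde{x}_l-\widetilde{x}_2)=0$ for every $l\geq 3$. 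With the triple relations in hand, the reduction performed in the proof of Proposition \ref{R'(1)} applies verbatim to $\widetilde{x}_1,\ldots,\widetilde{x}_N$ (only the triple relation is used there), so the quotient is already spanned by the images of $\widetilde{\mathcal{B}}_{d,N}(1)$.

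It then remains to dispose of the monomials in $\widetilde{\mathcal{B}}_{d,N}(1)\setminus\mathcal{B}_{d,n}(g_{\ul{i},\ul{k}})$, namely those of the form $\widetilde{x}_1^{a}\widetilde{x}_2^{b}\prod_{j\in T}\widetilde{x}_j$ with $a\geq 1$ and $\emptyset\neq T\subseteq\{3,\ldots,N\}$ (inside $\widetilde{\mathcal{B}}_{d,N}(1)$ the high variables occur only to the first power). I would argue by induction on $|T|$. For $l=\max T$, writing $\widetilde{x}_1\widetilde{x}_l=\widetilde{x}_1\widetilde{x}_2+\widetilde{x}_l^{\,2}-\widetilde{x}_2\widetilde{x}_l$ from the relation $(\widetilde{x}_l-\widetilde{x}_1)(\widetilde{x}_l-\widetilde{x}_2)=0$ produces a term with $T$ replaced by $T\setminus\{l\}$, handled by the induction, together with terms still carrying $\widetilde{x}_l$; re-applying the first reduction to return to $\widetilde{\mathcal{B}}_{d,N}(1)$ then leaves the genuine obstruction $\widetilde{x}_1^{a}\widetilde{x}_2^{b}\widetilde{x}_l$ with $a\geq 2$. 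Around the three indices $\{1,2,l\}$ the triple relation equals $(\widetilde{x}_1-\widetilde{x}_2)$ times $(\widetilde{x}_l-\widetilde{x}_1)(\widetilde{x}_l-\widetilde{x}_2)$, so it carries no information beyond the latter; consequently a purely monomial rewriting reproduces the original term and is circular, and cannot lower the power of $\widetilde{x}_1$.

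The main obstacle is exactly this circular case, and I expect to resolve it as in the proof of Proposition \ref{propg_1g_3}, using the idempotents $e_{k,l}=\frac{1}{d}\sum_{s=0}^{d-1}\widetilde{x}_k^{\,s}\widetilde{x}_l^{-s}$ supplied by $\widetilde{x}_j^{\,d}=1$ together with the semisimplicity of $A_{d,n}$. Since $e_{1,2}(\widetilde{x}_l-\widetilde{x}_1)(\widetilde{x}_l-\widetilde{x}_2)=e_{1,2}(\widetilde{x}_l-\widetilde{x}_1)^2=0$ in the (semisimple, hence reduced) quotient, one gets $e_{1,2}\widetilde{x}_l=e_{1,2}\widetilde{x}_1$; writing $M=\widetilde{x}_1^{a}\widetilde{x}_2^{b}\widetilde{x}_l\prod_{j\in T\setminus\{l\}}\widetilde{x}_j$, the piece $e_{1,2}M=\frac{1}{d}\sum_{s}\widetilde{x}_1^{\,a+1+s}\widetilde{x}_2^{\,b-s}\prod_{j\in T\setminus\{l\}}\widetilde{x}_j$ collapses to monomials with only $|T|-1$ high variables, covered by the induction. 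The complementary part $(1-e_{1,2})M$ lives where $\widetilde{x}_1-\widetilde{x}_2$ becomes invertible, and there the relation $(\widetilde{x}_l-\widetilde{x}_1)(\widetilde{x}_l-\widetilde{x}_2)=0$ splits $\widetilde{x}_l$ cleanly into the two values $\widetilde{x}_1,\widetilde{x}_2$ via the idempotent $(\widetilde{x}_l-\widetilde{x}_2)(\widetilde{x}_1-\widetilde{x}_2)^{-1}$, again reducing $|T|$. Carrying both pieces through the induction yields that $\mathcal{B}_{d,n}(g_{\ul{i},\ul{k}})$ spans $A_{d,n}/R'(g_{\ul{i},\ul{k}})$; I would expect the delicate point to be precisely the bookkeeping that keeps every branch of this idempotent decomposition inside $\mathrm{span}(\mathcal{B}_{d,n}(g_{\ul{i},\ul{k}}))$ while strictly lowering $|T|$.
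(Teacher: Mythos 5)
Your first two stages are correct and use the same ingredients as the paper (Proposition \ref{above} for spanning by $\widetilde{B}_{d,N}$, the relations of Proposition \ref{relations}, and the cubic reduction of Proposition \ref{R'(1)}, which indeed only uses the triple relations). The genuine gap is in your last stage, in the treatment of the component $(1-e_{1,2})M$. The $e_{1,2}$-component is fine: $e_{1,2}$ is a polynomial in $\widetilde{x}_1,\widetilde{x}_2$ alone, and your identity $e_{1,2}\widetilde{x}_l=e_{1,2}\widetilde{x}_1$ (via reducedness of the semisimple quotient) is valid, so $e_{1,2}M$ really is a combination of monomials with high-variable set $T\setminus\{l\}$. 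But the splitting idempotent $f=(1-e_{1,2})(\widetilde{x}_l-\widetilde{x}_2)(\widetilde{x}_1-\widetilde{x}_2)^{-1}$ is \emph{not} a polynomial in $\widetilde{x}_1,\widetilde{x}_2$: it involves $\widetilde{x}_l$ linearly. Consequently $fM'$ and $\bigl((1-e_{1,2})-f\bigr)M''$ (where $M',M''$ denote $M$ with $\widetilde{x}_l$ replaced by $\widetilde{x}_1$, resp.\ $\widetilde{x}_2$) do \emph{not} lie in the span of monomials with high-set $T\setminus\{l\}$ — as functions on the characters of $A_{d,n}/R'(g_{\ul{i},\ul{k}})$ they still depend on the value of $\widetilde{x}_l$ — so the inductive hypothesis, which is a statement about monomials, does not apply to them. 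Worse, the decomposition is a tautology: writing the inverse as $(\widetilde{x}_1-\widetilde{x}_2)^{-1}=(1-e_{1,2})Q(\widetilde{x}_1,\widetilde{x}_2)$ and expanding, one gets $(\widetilde{x}_l-\widetilde{x}_2)M'-(\widetilde{x}_l-\widetilde{x}_1)M''=(\widetilde{x}_1-\widetilde{x}_2)M$, hence $fM'+\bigl((1-e_{1,2})-f\bigr)M''=(1-e_{1,2})Q(\widetilde{x}_1-\widetilde{x}_2)M=(1-e_{1,2})M$, i.e.\ the splitting reproduces exactly the element you started from and lowers nothing. So the induction on $|T|$ does not close, and no amount of bookkeeping will make this particular decomposition close it.

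The obstruction is an artifact of your order of operations (cubic reduction to $\widetilde{\mathcal{B}}_{d,N}(1)$ first, quadratic elimination second). The paper eliminates in the opposite order: starting from all of $\widetilde{B}_{d,N}$, it first iterates $\widetilde{x}_1\widetilde{x}_j=\widetilde{x}_j^2-\widetilde{x}_2\widetilde{x}_j+\widetilde{x}_1\widetilde{x}_2$ for $j>2$, allowing intermediate monomials with $\widetilde{x}_j^2,\widetilde{x}_j^3,\ldots$ that lie outside $\widetilde{\mathcal{B}}_{d,N}(1)$; this terminates because the quantity $2\cdot(\text{exponent of }\widetilde{x}_1)+\sum_{j\geq 3}(\text{exponent of }\widetilde{x}_j)$ strictly decreases at each step (exponents being reduced modulo $d$). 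After that, every surviving monomial is either $\widetilde{x}_1$-free or of the form $\widetilde{x}_1^a\widetilde{x}_2^b$, and only then is the cubic reduction of Proposition \ref{R'(1)} applied; since it is only ever needed for indices $i\geq 2$, it never reintroduces $\widetilde{x}_1$, so the two eliminations do not interfere. In particular your ``genuine obstruction'' $\widetilde{x}_1^a\widetilde{x}_2^b\widetilde{x}_l$ with $a\geq 2$ is not an obstruction at all: it is dissolved by iterating the quadratic relation without returning to $\widetilde{\mathcal{B}}_{d,N}(1)$ in between. Replacing your final stage by this rewriting yields a complete argument — which is precisely the paper's proof.
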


\begin{proof} Again we  identify the elements of $A_{d,n}$ with their images in $A_{d,n}/R'(g_{\ul{i},\ul{k}})$. 
First, thanks to Proposition \ref{above}, we have that
$$\{ M + R'(g_{\underline{i},\underline{k}}) \ | \ M \in \widetilde{B}_{d,N} \}$$
 is a spanning set for $A_{d,n}/R'(g_{\ul{i},\ul{k}})$. Moreover, as we saw in the proof of Proposition \ref{above}, $x_{i_1+1}=x_{i_{L(1)}+1}$, and  following Proposition \ref{relations}, we have
\begin{equation}\label{covered}
(\widetilde{x}_j-\widetilde{x}_{1})(\widetilde{x}_j-\widetilde{x}_2)=0\,\,\,\,\,\text{for all }  1 \leq j \leq N.
\end{equation}
 Thus, for $j > 2$, we obtain
\begin{equation}\label{product}
\widetilde{x}_1\widetilde{x}_j  = \widetilde{x}_j^2 - \widetilde{x}_2\widetilde{x}_j+\widetilde{x}_1\widetilde{x}_2.
\end{equation}
Using (\ref{product}), we can remove all monomials that are divisible by $\widetilde{x}_1\widetilde{x}_j$ for $j >2$ from our spanning set.
 For $d=2$, there is nothing else to do.
 
 Let $d>2$.
Since $R'(1) \subseteq R'(g_{\ul{i},\ul{k}})$, we can
now repeat the same method as in the proof of Proposition \ref{R'(1)}, and use the relations
\begin{equation}
(\widetilde{x}_j-\widetilde{x}_{i})(\widetilde{x}_j-\widetilde{x}_{i+1})(\widetilde{x}_i-\widetilde{x}_{i+1})=0\,\,\,\,\,\text{for }  j-i \geq 2,
\end{equation}
to remove all monomials that are divisible by $\widetilde{x}_i\widetilde{x}_j^2$ for $j-i \geq 2$ from our spanning set.
Note that the case $i=1$ is already covered by (\ref{covered}).

We deduce that 
$$\{ M + R'(g_{\underline{i},\underline{k}}) \ | \ M \in\mathcal{B}_{d,n}(g_{\underline{i},\underline{k}}) \}$$
is a spanning set for $A_{d,n}/R'(g_{\underline{i},\underline{k}})$.
\end{proof}

 \begin{cor}\label{dimcor}
  Let $(\ul{i},\ul{k})\in \SS_n \setminus \{(\emptyset,\emptyset)\}$ and let $m:=wt(g_{\underline{i},\underline{k}})$.   
 We have
 $$ | \mathcal{B}_{d,n}(g_{\underline{i},\underline{k}})|=  2^{n-m-1}d^2-(2^{n-m-1}-1)d\,\geq \,{\rm dim}_{\C}(A_{d,n}/R'(g_{\underline{i},\underline{k}})).$$
 \end{cor}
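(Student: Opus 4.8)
The plan is to treat the two assertions of the corollary separately: the inequality $|\mathcal{B}_{d,n}(g_{\underline{i},\underline{k}})| \geq {\rm dim}_{\C}(A_{d,n}/R'(g_{\underline{i},\underline{k}}))$ is essentially free, while the closed-form evaluation $|\mathcal{B}_{d,n}(g_{\underline{i},\underline{k}})| = 2^{n-m-1}d^2-(2^{n-m-1}-1)d$ is a direct enumeration. For the inequality, Proposition \ref{spanning} states precisely that $\{M + R'(g_{\underline{i},\underline{k}}) \mid M \in \mathcal{B}_{d,n}(g_{\underline{i},\underline{k}})\}$ is a spanning set of the quotient $A_{d,n}/R'(g_{\underline{i},\underline{k}})$; since a spanning set of a vector space has at least as many elements as the dimension, the estimate is immediate. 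Thus all the substance lies in counting $\mathcal{B}_{d,n}(g_{\underline{i},\underline{k}})$.

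For the count, I first record that $N := n-m+1 \geq 2$: indeed $\mathcal{I}(g_{\underline{i},\underline{k}}) \subseteq \{1,\ldots,n-1\}$, so $m = wt(g_{\underline{i},\underline{k}}) \leq n-1$, and this is exactly what makes the variables $\widetilde{x}_1,\widetilde{x}_2,\widetilde{x}_N$ in the definition meaningful. I would then decompose $\mathcal{B}_{d,n}(g_{\underline{i},\underline{k}})$ into its three defining pieces and verify they are pairwise disjoint by inspecting, for each monomial, the exponent of $\widetilde{x}_1$ and the smallest index carrying a positive exponent. The first piece contains only monomials in which $\widetilde{x}_1$ is absent while some $\widetilde{x}_i$ with $2 \leq i \leq N-1$ has positive exponent; the third piece consists of monomials with positive exponent on $\widetilde{x}_1$; and the second piece $\{\widetilde{x}_N^b\}$ has all of $\widetilde{x}_1,\ldots,\widetilde{x}_{N-1}$ raised to the power $0$. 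These three conditions are mutually exclusive, and within each piece distinct parameters yield distinct monomials (in the first piece $i$ is recovered as the least index with positive exponent, then $a,b$ and the $\epsilon_j$ are read off directly), so the cardinality is the plain sum of the three.

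The second and third pieces contribute $d$ and $(d-1)d$ respectively. For the first piece, fixing $i$ with $2 \leq i \leq N-1$ gives $(d-1)$ choices of $a$, $d$ choices of $b$, and $2^{N-i-1}$ choices for $(\epsilon_{i+2},\ldots,\epsilon_N)$; summing the geometric series $\sum_{i=2}^{N-1} 2^{N-i-1} = 2^{N-2}-1$ yields $(d-1)d\,(2^{N-2}-1)$. Adding the three contributions collapses the $-(d-1)d$ against the $+(d-1)d$ and leaves $(d-1)d\,2^{N-2} + d = (d^2-d)\,2^{N-2} + d$. Substituting $N-2 = n-m-1$ gives $2^{n-m-1}d^2 - (2^{n-m-1}-1)d$, exactly the claimed value.

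The argument is wholly routine, since the real content was already dispatched in Proposition \ref{spanning}. The only points demanding attention are the verification that the three pieces are pairwise disjoint (so nothing is double-counted) and the bookkeeping of the geometric sum, including the degenerate case $N=2$, where the first piece is empty and the formula correctly reduces to $d^2$. I do not expect any genuine obstacle here.
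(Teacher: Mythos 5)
Your proposal is correct and takes essentially the same approach as the paper: the inequality follows, exactly as in the paper, from Proposition \ref{spanning} (a spanning set has at least as many elements as the dimension of the space it spans), and the evaluation of $|\mathcal{B}_{d,n}(g_{\ul{i},\ul{k}})|$ is the same three-piece count. The only cosmetic difference is that the paper recognizes the first two pieces of $\mathcal{B}_{d,n}(g_{\ul{i},\ul{k}})$ as a relabelled copy of $\mathcal{B}_{d,n-m}(1)$ and quotes the cardinality formula from Proposition \ref{R'(1)}, whereas you inline the geometric sum (and check disjointness and the degenerate case $N=2$ explicitly); the arithmetic is identical.
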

 \begin{proof} Using the first expression for $\mathcal{B}_{d,n}(g_{\underline{i},\underline{k}})$ given by Proposition \ref{spanning}, we obtain
 \begin{center}
 $ |\mathcal{B}_{d,n}(g_{\underline{i},\underline{k}})|= |\mathcal{B}_{d,n-m}(1)| + d(d-1)=(2^{n-m-1}-1)d^2-(2^{n-m-1}-2)d+d^2-d=2^{n-m-1}d^2-(2^{n-m-1}-1)d.$
\end{center} 
Since $\mathcal{B}_{d,n}(g_{\underline{i},\underline{k}})$ provides a spanning set for $A_{d,n}/R'(g_{\underline{i},\underline{k}})$, we have
$ | \mathcal{B}_{d,n}(g_{\underline{i},\underline{k}})|\,\geq \,{\rm dim}_{\C}(A_{d,n}/R'(g_{\underline{i},\underline{k}})).$
\end{proof}

  In Propositions \ref{R'(1)} and \ref{spanning} we have given spanning sets for the quotient spaces $A_{d,n}/R'(g_{\underline{i},\underline{k}})$ for all $(\ul{i},\ul{k})\in \SS_n$.
  Since $R'(g_{\underline{i},\underline{k}}) \subseteq R(g_{\underline{i},\underline{k}})$, for all $(\ul{i},\ul{k})\in \SS_n$, we have that
  $$\{b_{\ul{i},\ul{k}} + R(g_{\ul{i},\ul{k}})\,|\, b_{\ul{i},\ul{k}}  \in \mathcal{B}_{d,n}(g_{\ul{i},\ul{k}})\}$$
is a spanning set for $A_{d,n}/R(g_{\underline{i},\underline{k}})$.
We deduce that
\[
\boldsymbol{S}_{d,n}:=\left\{ \overline{b}_{\ul{i},\ul{k}}\,g_{\ul{i},\ul{k}} \,\left|\, (\ul{i},\ul{k}) \in \SS_n, \ b_{\ul{i},\ul{k}}  \in \mathcal{B}_{d,n}(g_{\ul{i},\ul{k}})\right. \right\}
\]
is a spanning set for $\YTL$, where $\overline{x_1^{r_1}\ldots x_n^{r_n}}=t_1^{r_1}\ldots t_n^{r_n}$ for all $r_1,\ldots, r_n \in \{0,\ldots,d-1\}$.

\subsection{A basis for $\YTL$}
 We will now compute the number of elements of the spanning set $\boldsymbol{S}_{d,n}$ obtained in the previous subsection, and prove that it is equal to ${\rm dim}_{\C(u)}(\C(u)\YTL)$.
For $0\leq m\leq n-1$, we set
\[
\CalZ_n(m):=\{g_{\underline{i},\underline{k}}\  | \ (\underline{i},\underline{k})\in \SS_n, \  wt(g_{\underline{i},\underline{k}})=m \}\quad\text{ and } \quad Z_n(m):=|\CalZ_n(m)|.
\]
We have already seen, in  (\ref{basisTL}), that the cardinality of the set $\{g_{\ul{i},\ul{k}} \,\,|\,\,
(\underline{i},\underline{k}) \in \SS_n\}$
 is equal to the $n$-th Catalan number $C_n$. Thus, we have
\begin{equation}\label{ind1}
\sum_{m=0}^{n-1} Z_n(m)=C_n.
\end{equation}
For $n'<n$, we will view the set of indices $\SS_{n'}$ as a subset of $\SS_n$. 

\begin{lem} For any $n\geq 1$, we have the following equalities :
\begin{equation}\label{ind2}
Z_n(m)=\sum_{j=n-m-1}^{n-1} Z_j(m-n+j+1)Z_{n-j}(n-j-1) \ \  \text{  for all }\,\, 0\leq m\leq n-2, 
\end{equation}
\begin{equation}\label{lem2}
Z_n(n-1)=C_{n-1},
\end{equation}
\begin{equation}\label{lem3}
\sum_{m=0}^{n-1} 2^{n-m}Z_n(m)=(n+1)C_n.
\end{equation}
\end{lem}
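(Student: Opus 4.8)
The plan is to prove the three identities in turn, with the recurrence (\ref{ind2}) carrying the main weight and the other two following from it. Throughout I record an element $g_{\ul{i},\ul{k}}\in\SS_n$ by its family of ``cycles'' $[i_l-k_l,\,i_l]$, viewed as intervals of integers; the defining inequalities of $\SS_n$ say exactly that both the tops $i_1<\dots<i_p$ and the bottoms $i_1-k_1<\dots<i_p-k_p$ are strictly increasing in $\{1,\dots,n-1\}$, while $wt(g_{\ul{i},\ul{k}})=|\mathcal{I}|$ is the size of the union $\mathcal{I}=\bigcup_l[i_l-k_l,\,i_l]$. In particular the weight never exceeds $n-1$, and $Z_j(0)=1$ for every $j\ge1$, the unique weight-$0$ element being the empty word. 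I use the inclusion $\SS_{n'}\subseteq\SS_n$ to regard cycles supported on a subinterval of $\{1,\dots,n-1\}$ as an element of a smaller $\SS$. The strategy is: prove (\ref{ind2}) by a ``last-gap'' bijection, deduce (\ref{lem2}) from it together with (\ref{ind1}) and the Catalan recurrence, and finally obtain (\ref{lem3}) by induction fed by (\ref{ind2}) and (\ref{lem2}).

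First I would prove (\ref{ind2}) by a bijection governed by the \emph{last gap}. Fix $m\le n-2$ and $g_{\ul{i},\ul{k}}\in\CalZ_n(m)$; since $|\mathcal{I}|=m<n-1$, the set $\{1,\dots,n-1\}\setminus\mathcal{I}$ is non-empty, so let $j$ be its largest element. By maximality $\{j+1,\dots,n-1\}\subseteq\mathcal{I}$, so these $n-1-j$ indices are all covered. Because $j\notin\mathcal{I}$, no cycle-interval contains $j$, hence each cycle lies entirely in $\{1,\dots,j-1\}$ or entirely in $\{j+1,\dots,n-1\}$; as the tops and bottoms increase, a single threshold splits the cycles into a lower block, an element of $\SS_j$ supported on $\{1,\dots,j-1\}$, and an upper block, which after shifting indices down by $j$ is an element of $\SS_{n-j}$ supported on $\{1,\dots,n-j-1\}$. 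The upper block covers all of $\{j+1,\dots,n-1\}$, hence has full weight $n-j-1$, while the lower block has weight $m-(n-1-j)=m-n+j+1$. The assignment is reversible: any pair consisting of a weight-$(m-n+j+1)$ element of $\SS_j$ and a full-weight element of $\SS_{n-j}$ (shifted up by $j$) glues to a unique element of $\CalZ_n(m)$ whose last gap is exactly $j$, the ordering conditions being automatic from the separation $j-1<j+1$. The constraints $0\le m-n+j+1\le j-1$ force $n-m-1\le j\le n-1$, which is precisely the range of summation, giving (\ref{ind2}). I expect this bijection to be the main obstacle—specifically, verifying that no cycle straddles the gap and that gluing recovers an element whose last gap is the prescribed index.

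Next I would deduce (\ref{lem2}) from (\ref{ind2}) and (\ref{ind1}) by induction on $n$. Writing $W(n):=Z_n(n-1)$, equation (\ref{ind1}) with the top weight isolated gives $C_n=W(n)+\sum_{m=0}^{n-2}Z_n(m)$; substituting (\ref{ind2}), interchanging the order of summation over $j$, and collapsing the inner sum by (\ref{ind1}) applied to $\SS_j$ yields
\[
C_n=W(n)+\sum_{j=1}^{n-1}C_j\,W(n-j).
\]
With the inductive hypothesis $W(i)=C_{i-1}$ for $i<n$ (base case $W(1)=Z_1(0)=1=C_0$), the Catalan recurrence $C_n=\sum_{j=0}^{n-1}C_jC_{n-1-j}$ gives $\sum_{j=1}^{n-1}C_jW(n-j)=\sum_{j=1}^{n-1}C_jC_{n-j-1}=C_n-C_{n-1}$, whence $W(n)=C_{n-1}$, which is (\ref{lem2}).

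Finally I would prove (\ref{lem3}) by the same mechanism. Set $f(n):=\sum_{m=0}^{n-1}2^{n-m}Z_n(m)$. Isolating the top weight, substituting (\ref{ind2}) into the remaining terms, reindexing by the lower weight $m'=m-n+j+1$ so that $2^{n-m}=2\cdot 2^{j-m'}$, and using (\ref{lem2}) for the full-weight factors, one obtains
\[
f(n)=2\,C_{n-1}+2\sum_{j=1}^{n-1}C_{n-j-1}\,f(j).
\]
Since $(j+1)C_j=\binom{2j}{j}$, the target reads $f(n)=\binom{2n}{n}$; assuming it for all $j<n$ and absorbing the first summand as the $j=0$ term reduces the inductive step to the identity $\binom{2n}{n}=2\sum_{j=0}^{n-1}C_{n-1-j}\binom{2j}{j}$. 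This I would verify with generating functions: for $C(x)=\frac{1-\sqrt{1-4x}}{2x}$ and $B(x)=(1-4x)^{-1/2}=\sum_{r\ge0}\binom{2r}{r}x^r$ one has $C(x)B(x)=\frac{1}{2x}\bigl(B(x)-1\bigr)=\frac12\sum_{s\ge0}\binom{2s+2}{s+1}x^s$, so the coefficient of $x^{n-1}$ in $C(x)B(x)$, which is $\sum_{j=0}^{n-1}C_{n-1-j}\binom{2j}{j}$, equals $\frac12\binom{2n}{n}$. This completes (\ref{lem3}), and with it the lemma.
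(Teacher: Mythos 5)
Your proof is correct, and for (\ref{ind2}) and (\ref{lem2}) it is the same argument as the paper's: the paper also splits $g_{\underline{i},\underline{k}}$ at the largest index $J\notin\mathcal{I}(g_{\underline{i},\underline{k}})$ into a lower factor in $\SS_J$ of weight $m-n+J+1$ and an upper factor in $\SS_{n-J}$ of full weight $n-J-1$, and it proves (\ref{lem2}) by exactly your induction, collapsing the double sum via (\ref{ind1}) and the Catalan recurrence. The only divergence is the endgame of (\ref{lem3}). Up to that point the two computations agree: the paper's substitution and reindexing produce precisely your recurrence, namely $\sum_{m}2^{n-m}Z_n(m)=2C_{n-1}+2\sum_{j=1}^{n-1}C_{n-j-1}\sum_{l}2^{j-l}Z_j(l)$, and after applying the induction hypothesis both of you must evaluate $2\sum_{j=0}^{n-1}(j+1)C_{n-j-1}C_j$. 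The paper does this by the symmetry $j\leftrightarrow n-1-j$ and a half-sum, obtaining $\frac{n+1}{2}\sum_{j=0}^{n-1}C_{n-j-1}C_j=\frac{n+1}{2}C_n$ directly from (\ref{induction formula}); you instead rewrite $(j+1)C_j=\binom{2j}{j}$ and close the induction with the convolution identity $\binom{2n}{n}=2\sum_{j=0}^{n-1}C_{n-1-j}\binom{2j}{j}$, verified via the generating functions $C(x)=\frac{1-\sqrt{1-4x}}{2x}$ and $(1-4x)^{-1/2}$. Both evaluations are valid; the paper's half-sum trick is self-contained and purely finitary, staying within Catalan-number manipulations already used in the proof, while your version requires the closed forms of the two generating functions but makes the answer $(n+1)C_n=\binom{2n}{n}$ conceptually transparent as a coefficient identity.
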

\begin{proof}
We first prove (\ref{ind2}). Let 
$$g_{\underline{i},\underline{k}}=(g_{i_1}g_{i_1-1}\ldots g_{i_1-k_1})(g_{i_2}g_{i_2-1}\ldots g_{i_2-k_2})\ldots (g_{i_p}g_{i_p-1}\ldots g_{i_p-k_p}) \in \CalZ_n(m).$$
Since $wt(g_{\underline{i},\underline{k}})=m\leq n-2$, there exists at least one $1 \leq j \leq n-1$ that does not belong to $\mathcal{I}(g_{\underline{i},\underline{k}})$. 
Then there exists $l \in \{0,\ldots,p\}$ such that $i_l <j< i _{l+1} - k_{l+1}$, taking $i_0:=0$ and $ i _{p+1} - k_{p+1}:=n$. 
Set
$$g_{\underline{i}',\underline{k}'}^{(j)}:=(g_{i_1}g_{i_1-1}\ldots g_{i_1-k_1})\ldots (g_{i_l}g_{i_l-1}\ldots g_{i_l-k_l})$$ 
and
$$g_{\underline{i}'',\underline{k}''}^{(j)}:=(g_{i_{l+1}}g_{i_{l+1}-1}\ldots g_{i_{l+1}-k_{l+1}})\ldots (g_{i_p}g_{i_p-1}\ldots g_{i_p-k_p}).$$
We have
$$g_{\underline{i},\underline{k}}=g_{\underline{i}',\underline{k}'}^{(j)}g_{\underline{i}'',\underline{k}''}^{(j)}.$$
All indices in $g_{\underline{i}',\underline{k}'}^{(j)}$ are strictly less than $j$ and we have $({\underline{i}',\underline{k}'}) \in \SS_j$.
All indices in $g_{\underline{i}'',\underline{k}''}^{(j)}$ are strictly greater than $j$ and we have $({\underline{i}'',\underline{k}''})\in \SS_{n-j}$ (after relabelling the generators).
Set $$J : = {\rm max}\,\{1 \leq j \leq n-1\,|\, j \notin \mathcal{I}(g_{\underline{i},\underline{k}})\}.$$
Since $|\{1 \leq j \leq n-1\,|\, j \notin \mathcal{I}(g_{\underline{i},\underline{k}})\}|=n-m-1$, we have
 $J \geq n-m-1$. By definition of $J$, we have $j \in \mathcal{I}(g_{\underline{i},\underline{k}})$ for all $j > J$, and so
 $g_{\underline{i}'',\underline{k}''}^{(J)} \in \CalZ_{n-J}(n-J-1)$.
 All the remaining indices appear in $g_{\ul{i}',\ul{k}'}^{(J)}$, so we have $g_{\underline{i}',\underline{k}'}^{(J)}\in \CalZ_J(m-n+J+1)$.
  We deduce that there exists a bijection between $\CalZ_n(m)$ and the set
\[
 \bigsqcup_{j=n-m-1}^{n-1}\CalZ_j(m-n+j+1)\times \CalZ_{n-j}(n-j-1).
\]
We obtain (\ref{ind2}) by taking the cardinalities of the above sets.

We will  prove Equation (\ref{lem2}) by induction on $n$. For $n=1$, we have $Z_1(0)=1=C_0$. Now assume that  $Z_j(j-1)=C_{j-1}$  for all $1 \leq j \leq n-1$.
By (\ref{ind2}), we have
$$
Z_n(m)=\sum_{j=n-m-1}^{n-1} Z_j(m-n+j+1)Z_{n-j}(n-j-1)=\sum_{j=n-m-1}^{n-1} Z_j(m-n+j+1)C_{n-j-1}$$ 
for all  $0\leq m\leq n-2$.
Then, by (\ref{ind1}), we have 
\[
C_n= \sum_{m=0}^{n-1}Z_n(m)= Z_{n}(n-1)+\sum_{m=0}^{n-2}Z_n(m)=Z_{n}(n-1)+\sum_{m=0}^{n-2}\sum_{j=n-m-1}^{n-1} Z_j(m-n+j+1)C_{n-j-1} .
\]
We change the index $m$ by setting $l:=m-n+j+1$ and observe the equivalence of conditions:
\begin{equation*}
\begin{cases}
0\leq m\leq n-2\\
n-m-1\leq j \leq n-1
\end{cases} \Longleftrightarrow\,\,\,\,\,\,\, \begin{cases}
0\leq l\leq j-1\\
1\leq j \leq n-1
\end{cases}.
\end{equation*}
We obtain
\[
C_n=Z_n(n-1)+\sum_{j=1}^{n-1}\sum_{l=0}^{j-1} Z_j(l)C_{n-j-1}=Z_n(n-1)+\sum_{j=1}^{n-1}C_{n-j-1}\sum_{l=0}^{j-1} Z_j(l). 
\]
Applying again (\ref{ind1}) yields:
\[
C_n=Z_n(n-1)+\sum_{j=1}^{n-1}C_{n-j-1}C_j.
\]
Now, we have the following well-known induction formula for Catalan numbers:
\begin{equation}\label{induction formula}
C_n=\sum_{j=0}^{n-1}C_{n-j-1}C_j.
\end{equation}
Hence, we obtain
\[
C_n=Z_n(n-1)+C_n-C_{n-1},
\]
whence $$Z_n(n-1)=C_{n-1}.$$

Finally, we will prove (\ref{lem3}) also by induction on $n$. For $n=1$,  (\ref{lem3})  becomes $2Z_1(0)=2C_1$, which is true. Now assume that  (\ref{lem3}) holds for $1,2,\ldots,n-1$. By (\ref{ind2}) and (\ref{lem2}), we have:
\begin{align*}
\sum_{m=0}^{n-1} 2^{n-m}Z_n(m)&= 2Z_n(n-1)+\sum_{m=0}^{n-2}2^{n-m}\left(\sum_{j=n-m-1}^{n-1} Z_j(m-n+j+1)Z_{n-j}(n-j-1) \right)\\
&=2C_{n-1}+\sum_{m=0}^{n-2}2^{n-m}\left(\sum_{j=n-m-1}^{n-1} Z_j(m-n+j+1)C_{n-j-1} \right).
\end{align*}
We change the index $m$  by setting  $l:=m-n+j+1$ and swap the sums as before:
\begin{equation*}
\sum_{m=0}^{n-1} 2^{n-m}Z_n(m)=2C_{n-1}+\sum_{j=1}^{n-1}C_{n-j-1}\sum_{l=0}^{j-1}2^{j-l+1} Z_j(l).
\end{equation*}
The induction hypothesis yields:
\[
\sum_{m=0}^{n-1} 2^{n-m}Z_n(m)=2C_{n-1}+\sum_{j=1}^{n-1}C_{n-j-1}2(j+1)C_j=2\sum_{j=0}^{n-1}(j+1)C_{n-j-1}C_j.
\]
Now observe that by using the symmetry $j\leftrightarrow n-j-1=j'$ and then taking the half-sum, we obtain:
\begin{equation*}
\sum_{j=0}^{n-1}(j+1)C_{n-j-1}C_j=\sum_{j'=0}^{n-1}(n-j')C_{n-j'-1}C_{j'}=\frac{n+1}{2}\sum_{j=0}^{n-1}C_{n-j-1}C_j,
\end{equation*}
which is equal to $\frac{n+1}{2}C_n$ by (\ref{induction formula}). Hence, we obtain
$$
\sum_{m=0}^{n-1} 2^{n-m}Z_n(m)=(n+1)C_n.
$$
\end{proof}

We will use the above lemma to calculate the cardinality of the spanning set 
\begin{equation}\label{S_{d,n}}
\boldsymbol{S}_{d,n}=\left\{ \overline{b}_{\ul{i},\ul{k}}\,g_{\ul{i},\ul{k}} \,\left|\, (\ul{i},\ul{k}) \in \SS_n, \ b_{\ul{i},\ul{k}}  \in \mathcal{B}_{d,n}(g_{\ul{i},\ul{k}})\right. \right\}
\end{equation}
for $\YTL$,
where $\mathcal{B}_{d,n}(1)$ is given by Proposition \ref{R'(1)}, while $\mathcal{B}_{d,n}(g_{\ul{i},\ul{k}})$ for $(\ul{i},\ul{k})\in \SS_n \setminus \{(\emptyset,\emptyset)\}$ is defined in Proposition \ref{spanning}. Equivalently, we can write
\begin{equation}\label{S_{d,n}2}
\boldsymbol{S}_{d,n}=\left\{ t_1^{r_1}\ldots t_n^{r_n}\,g_{\ul{i},\ul{k}} \,\left|\, (\ul{i},\ul{k}) \in \SS_n, \ (r_1,\ldots,r_n)  \in \mathcal{E}_{d,n}(g_{\ul{i},\ul{k}})\right. \right\},
\end{equation}
where $\mathcal{E}_{d,n}(g_{\ul{i},\ul{k}})$ is the subset of $\{0,\ldots,d-1\}^n$ defined by:
\begin{equation}\label{p}
(r_1,\ldots,r_n) \in \mathcal{E}_{d,n}(g_{\ul{i},\ul{k}}) \Leftrightarrow  x_1^{r_1}\ldots x_n^{r_n} \in \mathcal{B}_{d,n}(g_{\ul{i},\ul{k}})\ .
\end{equation}

\begin{prop}\label{correct number}
 Let $n \geq 3$. We have 
 $$|\boldsymbol{S}_{d,n}| = {\rm dim}_{\mathbb{C}(u)}(\C(u){\rm YTL}_{d,n}(u)).$$
 \end{prop}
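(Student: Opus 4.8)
The plan is to evaluate $|\boldsymbol{S}_{d,n}|$ explicitly and check that it coincides with the dimension computed in Proposition~\ref{propdim}. Using the description (\ref{S_{d,n}2}) together with (\ref{p}), the words in $\boldsymbol{S}_{d,n}$ that share a fixed braiding part $g_{\ul{i},\ul{k}}$ are in bijection with $\mathcal{B}_{d,n}(g_{\ul{i},\ul{k}})$, and distinct pairs $(\ul{i},\ul{k}) \in \SS_n$ produce distinct braiding parts; hence the set $\boldsymbol{S}_{d,n}$ has no collisions across different braiding parts and
$$|\boldsymbol{S}_{d,n}| = \sum_{(\ul{i},\ul{k}) \in \SS_n} |\mathcal{B}_{d,n}(g_{\ul{i},\ul{k}})|.$$
Since Corollary~\ref{dimcor} shows that $|\mathcal{B}_{d,n}(g_{\ul{i},\ul{k}})|$ depends only on the weight $m = wt(g_{\ul{i},\ul{k}})$, I would group the sum according to weight, using the counting numbers $Z_n(m)$ introduced before the Lemma.

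The per-term cardinalities are supplied by the previous subsection: for $m \geq 1$, Corollary~\ref{dimcor} gives $|\mathcal{B}_{d,n}(g_{\ul{i},\ul{k}})| = 2^{n-m-1}d^2 - (2^{n-m-1}-1)d$, while for the \emph{unique} weight-zero braiding part $g_{\emptyset,\emptyset}=1$, Proposition~\ref{R'(1)} gives the slightly smaller count $|\mathcal{B}_{d,n}(1)| = (2^{n-1}-1)d^2 - (2^{n-1}-2)d$. These two expressions unify as $2^{n-m-1}d^2 - (2^{n-m-1}-1)d - \delta_{m,0}(d^2-d)$, the correction accounting for $Z_n(0)=1$. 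Substituting and separating the $d^2$ and $d$ contributions, I obtain
$$|\boldsymbol{S}_{d,n}| = \frac{d^2}{2}\sum_{m=0}^{n-1}2^{n-m}Z_n(m) - d\left(\frac{1}{2}\sum_{m=0}^{n-1}2^{n-m}Z_n(m) - \sum_{m=0}^{n-1}Z_n(m)\right) - (d^2-d).$$

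The two Catalan identities from the Lemma then finish the computation: (\ref{ind1}) gives $\sum_{m}Z_n(m)=C_n$ and (\ref{lem3}) gives $\sum_{m}2^{n-m}Z_n(m)=(n+1)C_n$. Plugging these in yields $|\boldsymbol{S}_{d,n}| = \tfrac{1}{2}d^2(n+1)C_n - \tfrac{1}{2}d(n-1)C_n - d(d-1)$, which after factoring the coefficient of $C_n$ equals $\frac{d(nd-n+d+1)}{2}\,C_n - d(d-1)$, exactly the value of $\dim_{\C(u)}(\C(u)\YTL)$ from Proposition~\ref{propdim}. The computation is routine; the only points requiring genuine care are the bookkeeping of the $\delta_{m,0}$ correction (remembering that the weight-zero term uses Proposition~\ref{R'(1)} rather than the generic Corollary~\ref{dimcor} formula, which is precisely the subtraction of $Z_n(0)(d^2-d)=d^2-d$), and the observation at the outset that $|\boldsymbol{S}_{d,n}|$ is genuinely the additive sum of the $|\mathcal{B}_{d,n}(g_{\ul{i},\ul{k}})|$ with no identifications among words having distinct braiding parts.
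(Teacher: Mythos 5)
Your proof is correct and follows essentially the same route as the paper's: both count $\boldsymbol{S}_{d,n}$ by grouping braiding parts according to weight, combine Proposition \ref{R'(1)} and Corollary \ref{dimcor} for the per-weight counts (your $\delta_{m,0}$ unification is just the paper's separate treatment of the $m=0$ term written compactly), and then apply the identities (\ref{ind1}) and (\ref{lem3}) to recover the dimension formula of Proposition \ref{propdim}. The algebraic bookkeeping differs only cosmetically, so there is nothing to add.
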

\begin{proof} 
First recall that by (\ref{dim}),
$${\rm dim}_{\mathbb{C}(u)}(\C(u){\rm YTL}_{d,n}(u))=\frac{d\,(nd-n+d+1)}{2}\,C_n -d\,(d-1).$$
By Proposition \ref{R'(1)}, we have
$$|\mathcal{B}_{d,n}(1)|=(2^{n-1}-1)d^2-(2^{n-1}-2)d=2^{n-1}d(d-1) +d -d(d-1).$$
For $(\ul{i},\ul{k})\in \SS_n \setminus \{(\emptyset,\emptyset)\}$, by Corollary \ref{dimcor}, we have
 $$ | \mathcal{B}_{d,n}(g_{\underline{i},\underline{k}})|=  2^{n-m-1}d^2-(2^{n-m-1}-1)d=2^{n-m-1}d(d-1)+d,$$ where
$m=wt(g_{\underline{i},\underline{k}})$. 
Thus, the number of elements of our spanning set is:
\begin{align*}
|\boldsymbol{S}_{d,n}|&=2^{n-1}d(d-1) +d -d(d-1)+\sum_{m=1}^{n-1}\left(2^{n-m-1}d(d-1)+d\right)Z_n(m)\\ 
&=-d(d-1)+d(d-1)\sum_{m=0}^{n-1}2^{n-m-1}Z_n(m)+d\sum_{m=0}^{n-1}Z_n(m).
\end{align*}
Applying (\ref{lem3}) and (\ref{ind1}) to the equality above yields:
$$|\boldsymbol{S}_{d,n}|=-d(d-1)+d(d-1)\frac{n+1}{2}C_n+dC_n={\rm dim}_{\mathbb{C}(u)}(\C(u){\rm YTL}_{d,n}(u)).$$
 \end{proof}

 We can now deduce the main result of this section.
 
 \begin{thm}\label{thesecondtheorem}
 Let $n \geq 3$. The spanning set $\boldsymbol{S}_{d,n}$ is a basis of $\YTL$.
 \end{thm}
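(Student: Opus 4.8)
The plan is to combine the two facts already in hand: that $\boldsymbol{S}_{d,n}$ is a spanning set of $\YTL$ over $\C[u,u^{-1}]$, and that its cardinality equals the $\C(u)$-dimension of the algebra, as computed in Proposition \ref{propdim} and matched in Proposition \ref{correct number}. A spanning set of a finite-dimensional vector space whose size equals the dimension is automatically a basis, so the whole content of the argument is to pass to the field $\C(u)$, where this elementary linear-algebra principle applies, and then descend the resulting linear independence back to $\C[u,u^{-1}]$.

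Concretely, first I would recall that the construction at the end of the previous subsection shows that $\boldsymbol{S}_{d,n}$ generates $\YTL$ as a $\C[u,u^{-1}]$-module. Extending scalars along the inclusion $\C[u,u^{-1}] \hookrightarrow \C(u)$, the images of the elements of $\boldsymbol{S}_{d,n}$ span $\C(u)\YTL$ over $\C(u)$. By Proposition \ref{correct number}, the number of these elements equals $\dim_{\C(u)}(\C(u)\YTL)$; since this algebra is split semisimple, and in particular finite-dimensional over $\C(u)$, a spanning set of exactly this cardinality must be a $\C(u)$-basis. In particular, the images of the elements of $\boldsymbol{S}_{d,n}$ are linearly independent over $\C(u)$.

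It then remains to deduce that $\boldsymbol{S}_{d,n}$ itself is $\C[u,u^{-1}]$-linearly independent in $\YTL$. This I would obtain directly from the functoriality of the base-change map $\YTL \to \C(u)\YTL$: any relation $\sum_{s} c_s\, s = 0$ with $c_s \in \C[u,u^{-1}]$ maps to the relation $\sum_{s} c_s\, \bar{s} = 0$ in $\C(u)\YTL$, and the $\C(u)$-independence of the $\bar{s}$ just established forces every $c_s = 0$. Combined with the spanning property over $\C[u,u^{-1}]$, this shows that $\boldsymbol{S}_{d,n}$ is a $\C[u,u^{-1}]$-basis of $\YTL$, as claimed.

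I do not expect a genuine obstacle at this final stage: all the difficulty has been absorbed into the earlier results, namely the explicit spanning sets $\mathcal{B}_{d,n}(g_{\underline{i},\underline{k}})$ for the quotients $A_{d,n}/R(g_{\underline{i},\underline{k}})$ (Propositions \ref{R'(1)} and \ref{spanning}) together with the Catalan-number identities of the preceding Lemma feeding the count in Proposition \ref{correct number}. The one conceptual point to handle carefully is that it is the \emph{equality} of cardinalities, not a mere inequality, that upgrades a spanning set to a basis; hence it is essential that the dimension of Proposition \ref{propdim} be taken over the field $\C(u)$ and that linear independence be verified there before transferring it back to $\C[u,u^{-1}]$.
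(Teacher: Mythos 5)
Your proposal is correct and follows essentially the same route as the paper: Proposition \ref{correct number} shows the spanning set has cardinality equal to $\dim_{\C(u)}(\C(u)\YTL)$, hence is a $\C(u)$-basis, whence linear independence over $\C(u)$ descends to $\C[u,u^{-1}]$ and $\boldsymbol{S}_{d,n}$ is a basis of $\YTL$. The paper's own proof is just a terser version of exactly this argument, including the passage to $\C(u)$ and back.
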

 
 \begin{proof}
 Due to Proposition \ref{correct number}, the set $\boldsymbol{S}_{d,n}$ is a basis of $\C(u)\YTL$.  Hence, the elements of $\boldsymbol{S}_{d,n}$ are linearly independent over $\C(u)$, and thus over $\C[u,u^{-1}]$. We deduce that $\boldsymbol{S}_{d,n}$ is a basis of $\YTL$.
 \end{proof}

 \begin{cor} 
 Let $n \geq 3$ and $(\ul{i},\ul{k}) \in \SS_n$. We have $R'(g_{\underline{i},\underline{k}}) = R(g_{\underline{i},\underline{k}})$ and the set
 $$\{b_{\ul{i},\ul{k}} + R(g_{\ul{i},\ul{k}})\,|\, b_{\ul{i},\ul{k}}  \in \mathcal{B}_{d,n}(g_{\ul{i},\ul{k}})\}$$
is a basis of $A_{d,n}/R(g_{\underline{i},\underline{k}})$.
 \end{cor}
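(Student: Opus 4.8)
The plan is to reduce everything to the linear independence of $\boldsymbol{S}_{d,n}$, which is already granted by Theorem \ref{thesecondtheorem}. Recall that at the end of the previous subsection we established that the set $\{b_{\ul{i},\ul{k}} + R(g_{\ul{i},\ul{k}})\mid b_{\ul{i},\ul{k}} \in \mathcal{B}_{d,n}(g_{\ul{i},\ul{k}})\}$ spans $A_{d,n}/R(g_{\ul{i},\ul{k}})$, and that $R'(g_{\ul{i},\ul{k}}) \subseteq R(g_{\ul{i},\ul{k}})$. Thus the only missing ingredient is that these representatives are \emph{linearly independent} in $A_{d,n}/R(g_{\ul{i},\ul{k}})$; once this is shown, the equality $R' = R$ will follow by a dimension count. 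First I would isolate a filtered-spanning statement: by induction on the degree, every element of $S$ of degree $<m$ lies in the $\C[u,u^{-1}]$-span of the elements of $\boldsymbol{S}_{d,n}$ of degree $<m$. Indeed, reducing $b \in B_{d,n}$ modulo $R(g_{\ul{i},\ul{k}})$ writes $\overline{b}\,g_{\ul{i},\ul{k}}$ as a combination of the degree-$m$ elements $\overline{b}_{\ul{i},\ul{k}}\,g_{\ul{i},\ul{k}}$ of $\boldsymbol{S}_{d,n}$ plus, by the very definition of $R(g_{\ul{i},\ul{k}})$, a term in ${\rm Span}_{\C[u,u^{-1}]}(S^{<g_{\ul{i},\ul{k}}})$, which the induction hypothesis already controls.

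With this in hand, the linear independence is immediate. Fix $(\ul{i},\ul{k}) \in \SS_n$ and put $m := {\rm deg}(g_{\ul{i},\ul{k}})$. Suppose $\sum_b c_b\,b_{\ul{i},\ul{k}} \in R(g_{\ul{i},\ul{k}})$ for scalars $c_b$. By the definition of the ideal $R(g_{\ul{i},\ul{k}})$ this means $\sum_b c_b\,\overline{b}_{\ul{i},\ul{k}}\,g_{\ul{i},\ul{k}} \in {\rm Span}_{\C[u,u^{-1}]}(S^{<g_{\ul{i},\ul{k}}})$, and by the filtered-spanning statement the right-hand side lies in the span of the elements of $\boldsymbol{S}_{d,n}$ of degree strictly less than $m$. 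The left-hand side, however, is a combination of the degree-$m$ elements $\overline{b}_{\ul{i},\ul{k}}\,g_{\ul{i},\ul{k}}$ of $\boldsymbol{S}_{d,n}$. Since these are pairwise distinct members of the basis $\boldsymbol{S}_{d,n}$ and are disjoint from the lower-degree ones, the linear independence of $\boldsymbol{S}_{d,n}$ forces every $c_b = 0$. Hence the representatives are linearly independent and, being a spanning set, form a basis of $A_{d,n}/R(g_{\ul{i},\ul{k}})$; in particular ${\rm dim}_{\C}(A_{d,n}/R(g_{\ul{i},\ul{k}})) = |\mathcal{B}_{d,n}(g_{\ul{i},\ul{k}})|$.

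To finish, I would close the gap $R' = R$ by the chain of inequalities
$$
{\rm dim}_{\C}(A_{d,n}/R(g_{\ul{i},\ul{k}})) \leq {\rm dim}_{\C}(A_{d,n}/R'(g_{\ul{i},\ul{k}})) \leq |\mathcal{B}_{d,n}(g_{\ul{i},\ul{k}})| = {\rm dim}_{\C}(A_{d,n}/R(g_{\ul{i},\ul{k}})),
$$
where the first inequality comes from the surjection $A_{d,n}/R'(g_{\ul{i},\ul{k}}) \twoheadrightarrow A_{d,n}/R(g_{\ul{i},\ul{k}})$ induced by $R' \subseteq R$, the second from the fact that $\mathcal{B}_{d,n}(g_{\ul{i},\ul{k}})$ spans $A_{d,n}/R'(g_{\ul{i},\ul{k}})$ (Corollary \ref{dimcor}, and Proposition \ref{R'(1)} in the case $(\ul{i},\ul{k})=(\emptyset,\emptyset)$), and the last equality from the previous paragraph. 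All terms are therefore equal, so ${\rm dim}_{\C}(A_{d,n}/R') = {\rm dim}_{\C}(A_{d,n}/R)$, and combined with $R' \subseteq R$ this yields $R'(g_{\ul{i},\ul{k}}) = R(g_{\ul{i},\ul{k}})$. The main (and essentially only) point requiring care is the filtered-spanning bookkeeping of the first paragraph, which controls exactly how the lower-degree correction terms coming from the definition of $R$ re-expand over $\boldsymbol{S}_{d,n}$; once that is set up, the rest is a routine dimension count resting on Theorem \ref{thesecondtheorem} and Proposition \ref{correct number}.
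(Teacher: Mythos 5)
Your proof is correct and follows essentially the same route as the paper, which states this corollary without proof as an immediate consequence of Theorem \ref{thesecondtheorem}: linear independence of $\boldsymbol{S}_{d,n}$ forces the classes $\{b_{\ul{i},\ul{k}} + R(g_{\ul{i},\ul{k}})\}$ to be linearly independent (hence a basis, since spanning was already established), and then the dimension count ${\rm dim}_{\C}(A_{d,n}/R) \leq {\rm dim}_{\C}(A_{d,n}/R') \leq |\mathcal{B}_{d,n}(g_{\ul{i},\ul{k}})| = {\rm dim}_{\C}(A_{d,n}/R)$ together with $R' \subseteq R$ gives $R' = R$. Your explicit filtered-spanning lemma (elements of $S$ of degree $<m$ lie in the span of elements of $\boldsymbol{S}_{d,n}$ of degree $<m$) is precisely the bookkeeping the paper leaves implicit, and you have set it up correctly.
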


\end{document}